\newcommand{\bl}[1]{{\color{blue}#1}}
\newcommand{\ro}[1]{{\color{red}#1}}
\newcommand{\ora}[1]{{\color{orange}#1}}
\newcommand{\la}{\lambda}
\theoremstyle{plain}
\newtheorem{thr}{Theorem}[section]
\newtheorem{lem}[thr]{Lemma}
\newtheorem{prop}[thr]{Proposition}
\newtheorem{cor}[thr]{Corollary}
\theoremstyle{definition}
\newtheorem{defi}[thr]{Definition}
\newtheorem{ex}[thr]{Example}
\theoremstyle{remark}
\newtheorem{remk}{Remark}
\theoremstyle{remark}
\newcommand{\field}[1]{\mathbb{#1}}
\newcommand{\R}{\field{R}}
\definecolor{wred}{rgb}{0.7,0.18,0.12}
\definecolor{wgreen}{rgb}{0.1,0.53,0.37}
\numberwithin{equation}{section}
\def\barray{\begin{eqnarray*}}             \def\earray{\end{eqnarray*}}
\def\beq{\begin{equation}} \def\eeq{\end{equation}}
\title{Quiver representations and dimension reduction in dynamical systems}
\author{Eddie Nijholt\footnote{\mbox{ Department of Mathematics, University of Illinois, USA, \href{mailto:eddie.nijholt@gmail.com}{eddie.nijholt@gmail.com} }   }, Bob Rink\footnote{ \mbox{Department of Mathematics, Vrije Universiteit Amsterdam, The Netherlands, \href{mailto:b.w.rink@vu.nl}{b.w.rink@vu.nl} }} {} and S\"oren Schwenker\footnote{ \mbox{Department of Mathematics, Universit\"at Hamburg, Germany, \href{mailto:soeren.schwenker@uni-hamburg.de}{soeren.schwenker@uni-hamburg.de} }}}
\date{\today}
\begin{document}

\maketitle

\begin{abstract} Dynamical systems often admit geometric properties that must be taken into account when studying their behaviour. We show that many such properties can be encoded by means of quiver representations. These properties include classical symmetry, hidden symmetry and feedforward structure, as well as subnetwork and quotient relations in network dynamical systems. A quiver equivariant dynamical system consists of a collection of dynamical systems with maps between them that send solutions to solutions. We prove that such quiver structures are preserved under Lyapunov-Schmidt reduction, center manifold reduction, and  normal form reduction. %As a result, we are able to describe to what extent network structure and feedforward structure are preserved by these reduction methods.
\end{abstract}

\section{Introduction}\label{sec0}
In this paper we show that various structural properties of dynamical systems (ODEs and iterated maps) can be encoded using the language of quiver representations. These structural properties include classical symmetry, but also feedforward structure, subnetwork and quotient relations in network dynamical systems, and so-called hidden symmetry, including interior symmetry and quotient symmetry. This paper aims to provide a unifying framework for studying dynamical systems with  quiver symmetry. 

A quiver representation consists of a collection of vector spaces with linear maps between them. A simple example is the representation of a group (where there is only one vector space). We shall speak of a dynamical system with quiver symmetry when a dynamical system is defined on each of the vector spaces of the representation, and so that all the linear maps in the representation send the orbits of one dynamical system to orbits of another one.

We will argue that  quiver symmetry is quite prevalent in dynamical systems that have the structure of an interacting network. Essentially, this insight can already be found in the work of Golubitsky, Stewart et al \cite{curious}, \cite{golstew},  \cite{torok}, \cite{stewartnature}, \cite{pivato}, who realised that every trajectory of a so-called {\it quotient} of a network gives rise to a trajectory in the original network system. DeVille and Lerman \cite{deville} later generalised this result and formulated it in the language that we shall use in this paper. Inspired by these ideas, we shall define two distinct quiver representations for each network dynamical system - the quiver of subnetworks and the quiver of quotient networks - and we will    investigate how  these quivers impact the dynamics of a network. 
%We shall also investigate how these quiver representations impact the dynamics of a network. 

The advantage of interpreting a  property of a dynamical system as a quiver symmetry, lies in the fact that quiver symmetry is an intrinsic property of a dynamical system. It is for example preserved under composition of maps and Lie brackets of vector fields. 
Unlike for example network structure, which is generally destroyed when a coordinate transformation is applied, quiver symmetry is thus 
defined in a coordinate-invariant manner. This motivates us to start developing a theory for dynamical systems with quiver symmetry. 

As a consequence of its intrinsic definition, quiver symmetry can be incorporated quite easily in many of the  tools that are available for the analysis of dynamical systems.  In this paper, we focus on the impact of quiver symmetry on local dimension reduction techniques. It is well-known that classical symmetry (of compact group actions) is preserved by various of these reduction techniques - see \cite{field3}, \cite{field4}, \cite{fieldgolub}, \cite{constrainedLS}, \cite{perspective}, \cite{golschaef2} and references therein for an overview of results. In this paper, we will generalise these results, by proving that quiver symmetry can be preserved in Lyapunov-Schmidt reduction, center manifold reduction and normal form reduction. More precisely, we show that the dynamical systems that result after applying these  reduction techniques inherit the quiver symmetry of the original dynamical system. Partial results in this direction were obtained by the authors in earlier papers  \cite{fibr}, \cite{CMR}, \cite{CMRSIREV}, \cite{RinkSanders2}, \cite{RinkSanders3}, \cite{CCN}, \cite{schwenker}. 
This paper provides a unifying context for these earlier results. Because the results in this paper apply to any (finite) quiver, we shall not yet try to use any of the more involved results from the theory of quiver representations, such as Gabriel's classification theorem \cite{Gabriel}.

We will start this paper with a  simple  illustrative example of a dynamical system with quiver symmetry in Section \ref{sec2}. We then define quiver representations and quiver equivariant maps in Section \ref{sec3}. In Sections \ref{sec4} and \ref{sec5} we discuss two natural examples of quiver representations that one encounters in the study of network dynamical sytems. In Section \ref{sec6} we gather some properties of endomorphisms of quiver representations. This prepares us to prove the results on Lyapunov-Schmidt reduction, center manifold reduction and normal forms  in Sections \ref{sec7}, \ref{sec8} and \ref{sec9}. We finish the paper with an example in Section \ref{sec10}.

\section{A simple feedforward system}\label{sec2}
Before describing our results in more generality, let us start with a simple example. To this end, let $E_1$ and $E_2$ be finite dimensional real vector spaces and consider a differential equation of the feedforward form 
\begin{align} \label{ffw} 
\left\{\begin{array}{l} 
\frac{dx}{dt} = f(x)\, ,\\
\frac{dy}{dt} = g(x,y)\, , 
\end{array}\right. 
\end{align}
where $x\in E_1$ and $y\in E_2$. We will  show that such a feedforward system can in fact be thought of as a system with quiver symmetry. To explain this, let us (artificially) replace (\ref{ffw}) by two separate systems of differential equations
\begin{align} \label{ffwpairA}
& \left\{ \begin{array}{l} 
\frac{dx}{dt} = f(x)\, , \\
\frac{dy}{dt} = g(x,y)\ , \, 
\end{array} \right.   \hspace{.5cm} \mbox{for} \ (x,y) \in E_1\times E_2\, , \\  \label{ffwpairB}
 & \left\{  \frac{dX}{dt} = f(X)\right. \, , \hspace{.9cm} \mbox{for} \ X \in E_1 \, .
\end{align}
This unconventional step allows us  to formulate the following simple lemma. It states that a feedforward system can be thought of as a system with symmetry.
\begin{lem}\label{fflemma}
A pair of (systems of) differential equations
\begin{align} 
\label{ffwpairgen1}
& \left\{ 
\begin{array}{l} 
\frac{dx}{dt} = F(x, y)\\
\frac{dy}{dt} = G(x,y)\, 
\end{array} 
\right. 
\\ \label{ffwpairgen2}
&  \left\{  \frac{dX}{dt} = H(X)\right.
\end{align}
is of the feedforward form  (\ref{ffwpairA}),  (\ref{ffwpairB}) if and only if the map 
$$R: E_1\times E_2 \to E_1 \ \mbox{defined\ by}\ R(x,y):=x$$ 
sends every solution of (\ref{ffwpairgen1})  to a solution of (\ref{ffwpairgen2}).
\end{lem}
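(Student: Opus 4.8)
The plan is to unwind the condition ``$R$ sends solutions to solutions'' into a pointwise identity between the vector fields, and then simply read off the feedforward structure. Observe first that the image of any curve $(x(t),y(t))$ under $R$ is just $x(t)$ itself, so if $(x(t),y(t))$ solves \eqref{ffwpairgen1} then $\tfrac{d}{dt}R(x(t),y(t))=\tfrac{dx}{dt}=F(x(t),y(t))$, whereas $x(t)$ being a solution of \eqref{ffwpairgen2} means precisely $\tfrac{dx}{dt}=H(x(t))$. Hence the statement ``$R$ maps every solution of \eqref{ffwpairgen1} to a solution of \eqref{ffwpairgen2}'' is equivalent to the requirement that
$$F(x(t),y(t))=H(x(t))$$
hold along every solution $(x(t),y(t))$ of \eqref{ffwpairgen1}. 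Everything then reduces to comparing this identity with the definition of feedforward form.

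For the forward implication, suppose the pair is of feedforward form, so that $F(x,y)=f(x)$ is independent of $y$ and $H=f$. Then along any solution $F(x(t),y(t))=f(x(t))=H(x(t))$, so the displayed identity holds automatically and $R$ indeed sends solutions to solutions. This direction is purely a substitution and requires no analysis.

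For the converse I would invoke the local existence theorem for the vector field defining \eqref{ffwpairgen1}. Fixing an arbitrary point $(x_0,y_0)\in E_1\times E_2$, let $(x(t),y(t))$ be a solution with $(x(0),y(0))=(x_0,y_0)$; evaluating the identity $F(x(t),y(t))=H(x(t))$ at $t=0$ yields $F(x_0,y_0)=H(x_0)$. Since $(x_0,y_0)$ was arbitrary, this forces $F(x,y)=H(x)$ for all $(x,y)\in E_1\times E_2$. In particular $F$ does not depend on $y$, and setting $f:=H$ and $g:=G$ exhibits the pair as being of the feedforward form \eqref{ffwpairA}, \eqref{ffwpairB}.

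The only genuine obstacle lies in this converse step, and it is the familiar one: to promote ``$F=H$ along all solutions'' to ``$F(x,y)=H(x)$ everywhere'' I need a trajectory through each prescribed initial point $(x_0,y_0)$. This is guaranteed as soon as $F$ and $G$ are regular enough for solutions to exist (continuity already suffices, and the systems here are taken to be smooth). I emphasise that \emph{uniqueness} of solutions is not needed—existence of a single trajectory through each point is enough, since the argument only compares the two vector fields at the initial time $t=0$.
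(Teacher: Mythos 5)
Your proof is correct and follows essentially the same route as the paper's: the forward direction is a direct substitution, and the converse takes a solution through an arbitrary initial point $(x_0,y_0)$ and evaluates the identity $F(x(t),y(t))=H(x(t))$ at $t=0$ to conclude $F(x,y)=H(x)$ everywhere. Your explicit remark that only \emph{existence} (not uniqueness) of solutions through each point is required is a small but valid sharpening; the paper writes ``the solution'' through $(x,y)$, tacitly assuming well-posedness, though its argument likewise uses only existence.
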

\begin{proof}
Firstly, assume that (\ref{ffwpairgen1}) and (\ref{ffwpairgen2}) are actually of the form (\ref{ffwpairA}), (\ref{ffwpairB}) respectively. This means that   
$$F(x,y)=f(x), G(x,y)=g(x,y), H(X)=f(X)\, .$$ 
Assume now  that $(x(t), y(t))$ solves (\ref{ffwpairgen1}). Then $\frac{dx}{dt}(t) = F(x(t),y(t))$ and hence $X(t):=R(x(t), y(t)) = x(t)$ satisfies 
$$\frac{dX}{dt}(t) = \frac{dx}{dt}(t) = F(x(t),y(t)) =f(x(t)) = f(X(t))=H(X(t))\, .$$ 
So $R$ sends solutions of (\ref{ffwpairgen1}) to solutions of (\ref{ffwpairgen2}).

For the other direction, assume that for every solution $(x(t), y(t))$ of (\ref{ffwpairgen1}) the curve $X(t):=R(x(t), y(t)) = x(t)$ is a solution of (\ref{ffwpairgen2}). Let $(x, y) \in E_1\times E_2$ be arbitrary, and let $(x(t), y(t))$ be the solution of (\ref{ffwpairgen1}) with $(x(0), y(0))=(x,y)$. Define $X := R(x,y)=x$ and $X(t):=R(x(t), y(t))= x(t)$. Then $X(0)=X$ and $\frac{dX}{dt}(t) = H(X(t))$. It follows that $F(x,y) = \frac{dx}{dt}(0) = \frac{dX}{dt}(0) = H(X(0))=H(X) = H(x)$. So $F(x,y)=H(x)$ for all $x,y$. If we now define $f(X):=H(X)$ and $g(x,y) := G(x,y)$, then obviously $F(x,y) = H(x) = f(x)$. In other words, (\ref{ffwpairA})  coincides with (\ref{ffwpairgen1})  and  (\ref{ffwpairB}) coincides with  (\ref{ffwpairgen2}).
 \end{proof}
\noindent Lemma \ref{fflemma} translates the property that an ODE has feedforward structure into a somewhat unconventional symmetry property. We will see many more examples of this phenomenon later. % The major advantage of this translation is that symmetry is an intrinsic geometric property that is  quite easily  incorporated in the analysis of the ODE. We will show  that symmetries such as the map $R$ in Lemma \ref{fflemma}  (and hence also feedforward structure) can be preserved in normal form reduction, Lyapunov-Schmidt reduction and center manifold reduction.  
It is important to note that the symmetry in Lemma \ref{fflemma} is a noninvertible map between two different vector spaces. We are thus not in the classical setting where the symmetries form a group. Instead, they form a (rather simple) quiver. %We will prove various theorems about quiver equivariant maps and differential equations. 
 
 The statement of the following  lemma is not new, see \cite{curious}, but we provide a new proof that is based on the observation in Lemma \ref{fflemma}. This proof nicely illustrates how quiver symmetry can be taken into account when we analyse a dynamical system. Moreover, the proof below easily generalises to dynamical systems with more complicated quiver symmetries, see Theorem \ref{CMtheorem} below. %Most of the proof below can be copy-pasted to more general situations.
\begin{lem}\label{cmff}
Let $(x_0,y_0)$ be an equilibrium point of the feedforward system 
\begin{align} \label{ffwpairAac}
& \left\{ \begin{array}{l} 
\frac{dx}{dt} = f(x)\, ,\\ 
\frac{dy}{dt} = g(x,y)\, ,\, 
\end{array} \right.    \hspace{.5cm} \mbox{for} \ (x,y) \in E_1\times E_2\, .
% \\  \label{ffwpairBac}
% & \left\{  \frac{dX}{dt} = f(X)\right. \ 
\end{align}
%and the surjective map $R: X\times Y \to X, (x,y)\mapsto x$. 
Denote by $L_{(x_0, y_0)}$ the  Jacobian of (\ref{ffwpairAac}) at $(x_0, y_0)$ and by $E_1\times E_2 = E^c \oplus E^h$  the decomposition into its  center and hyperbolic subspaces.  We denote by $$\pi^c: E_1\times E_2 = E^c \oplus E^h \to E^c \  , \ (x, y)\mapsto (x^c, y^c)$$  the projection onto $E^c$ along $E^h$. 
Assume that (\ref{ffwpairAac}) admits a global center manifold at $(x_0, y_0)$. Then $\pi^c$ conjugates the dynamics on this center manifold to a dynamical system on $E^c$ of the form
\begin{align} \label{ffwpairAacred}
& \left\{ \begin{array}{l} 
\frac{d x^c}{dt} = f^c(x^c)\, ,\\ 
\frac{d y^c}{dt} = g^c(x^c, y^c)\, . 
\end{array} \right. 
% \\  \label{ffwpairBac}
% & \left\{  \frac{dX}{dt} = f(X)\right. \ 
\end{align}
 %and that the Jacobian  $L_{x_0, y_0}: X\times Y \to X\times Y$ has a zero eigenvalue. %Then $X_0=x_0$ is an equilibrium point  of (\ref{ffwpairBac}). 
 
 %Denote by $L_{x_0,y_0}: X\times Y \to X\times Y$  the linearisation matrix of (\ref{ffwpairAac}) around $(x_0, y_0)$ and by $L_{X_0}:X\times X$  the linearisation matrix of (\ref{ffwpairBac}) around $X_0$. 
 
% If $\lambda$ is an eigenvalue of $L_{X_0}$ then it is also an eigenvalue of $L_{x_0,y_0}$. 
\end{lem}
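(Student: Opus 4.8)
The plan is to deduce everything from Lemma \ref{fflemma}, applied twice: once to recognise the feedforward structure of the original system as a quiver symmetry, and once more to recognise the feedforward structure of the reduced system. Alongside the full system (\ref{ffwpairAac}) on $E_1 \times E_2$, I would consider the auxiliary ``$x$-system'' $\frac{dX}{dt} = f(X)$ on $E_1$, and set $R : E_1 \times E_2 \to E_1$, $R(x,y) = x$. By Lemma \ref{fflemma}, the feedforward assumption says precisely that $R$ sends every solution of (\ref{ffwpairAac}) to a solution of the $x$-system; equivalently, writing $\Phi_t$ and $\Psi_t$ for the respective (local) flows, $R \circ \Phi_t = \Psi_t \circ R$. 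The goal is then to show that center manifold reduction is compatible with $R$, so that the reduced systems are again related by this projection, and to invoke Lemma \ref{fflemma} in reverse.

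First I would record the linear picture. Since $(x_0, y_0)$ is an equilibrium we have $f(x_0) = 0$, so $x_0$ is an equilibrium of the $x$-system with Jacobian $A := Df(x_0)$. Because $R$ is linear and intertwines the two flows, it intertwines the two linearisations, $R \, L_{(x_0, y_0)} = A \, R$; concretely $L_{(x_0,y_0)}$ is block lower-triangular with diagonal blocks $A$ and $D_y g(x_0,y_0)$, and $R$ is the projection onto the first block. An intertwining linear map carries generalised eigenspaces to generalised eigenspaces for the same eigenvalue, so $R$ maps the center subspace $E^c$ onto the center subspace $E_1^c$ of $A$ and the hyperbolic subspace $E^h$ into the hyperbolic subspace $E_1^h$; equivalently $R$ commutes with the associated spectral projections, $\pi_1^c \circ R = R \circ \pi^c$, where $\pi_1^c$ projects $E_1$ onto $E_1^c$. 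In particular the restriction $R^c := R|_{E^c} : E^c \to E_1^c$ is well defined and surjective, and in the product coordinates it is still given by $R^c(x^c, y^c) = x^c$.

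The decisive step is to choose the center manifolds compatibly: writing $W^c$ for the given global center manifold of (\ref{ffwpairAac}), I must produce a center manifold $W_1^c$ of the $x$-system with $R(W^c) \subseteq W_1^c$. Parametrising $W^c$ as the graph $\{\xi + h(\xi) : \xi \in E^c\}$ of a map $h : E^c \to E^h$, this amounts to the statement that the $E_1$-component $R\, h(\xi)$ of the graph depends only on $x^c = R\xi$, i.e. $R\, h(\xi) = h_1(R\xi)$ for some $h_1 : E_1^c \to E_1^h$; then $W_1^c = \{\eta + h_1(\eta) : \eta \in E_1^c\}$ and $R(W^c) = W_1^c$. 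I expect this to be the main obstacle, precisely because center manifolds are not unique, so it is \emph{not} a formal consequence of $R \circ \Phi_t = \Psi_t \circ R$ that $R$ carries one to another. The clean way to settle it is to build both manifolds by the same $R$-respecting fixed-point (Lyapunov--Perron) scheme with an $R$-equivariant cut-off, so that $R$ conjugates the two fixed-point equations and hence maps the full invariant manifold to the reduced one; alternatively, one checks directly that the invariant set $R(W^c)$ is tangent to $E_1^c$ at $x_0$ and is a graph over $E_1^c$, so that it is itself a center manifold of the $x$-system. This compatibility is exactly the point that the general Theorem \ref{CMtheorem} is designed to handle.

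Granting the compatible choice, the conclusion is immediate. The projections $\pi^c|_{W^c}$ and $\pi_1^c|_{W_1^c}$ conjugate the flows on the center manifolds to the reduced flows $\Phi^c_t$ on $E^c$ and $\Psi^c_t$ on $E_1^c$; combining $R|_{W^c} : W^c \to W_1^c$, which intertwines $\Phi_t$ and $\Psi_t$ by restriction, with $\pi_1^c \circ R = R \circ \pi^c$ yields $R^c \circ \Phi^c_t = \Psi^c_t \circ R^c$. Thus $R^c(x^c, y^c) = x^c$ sends every solution of the reduced full system on $E^c$ to a solution of the reduced $x$-system $\frac{dx^c}{dt} = f^c(x^c)$ on $E_1^c$. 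Running the argument of Lemma \ref{fflemma} once more, now on the center eigenspaces rather than on $E_1 \times E_2$, this forces the $x^c$-equation of the reduced full system to be independent of $y^c$, i.e. the reduced dynamics takes the feedforward form $\frac{dx^c}{dt} = f^c(x^c)$, $\frac{dy^c}{dt} = g^c(x^c, y^c)$, as claimed.
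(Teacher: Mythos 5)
Your overall architecture coincides with the paper's: apply Lemma \ref{fflemma} to get $R\circ F = f\circ R$, intertwine the Jacobians to get $R(E^c)\subset E^{c'}$, $R(E^h)\subset E^{h'}$ and $\Pi^c\circ R = R\circ \pi^c$, push the center manifold forward by $R$ to obtain the graph relation, and run Lemma \ref{fflemma} in reverse on the reduced vector fields. However, at what you yourself call the decisive step — showing $R$ maps the center manifold of the full system into that of the reduced one — you leave a genuine gap: you only sketch two possible repairs (an $R$-equivariant Lyapunov--Perron scheme, or a direct tangency-and-graph verification) and carry out neither. Moreover, your stated reason for expecting trouble, namely that ``center manifolds are not unique, so it is not a formal consequence of $R\circ\Phi_t = \Psi_t\circ R$ that $R$ carries one to another,'' is true for \emph{local} center manifolds but false under the lemma's actual hypothesis. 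The lemma assumes a \emph{global} center manifold, and by Theorem \ref{cmthm} this object is unique and has the intrinsic dynamical characterization $M^c = \{p \,:\, \sup_{t\in\mathbb{R}} \|\pi^h(e^{tF}(p))\| < \infty\}$. This is exactly the paper's key observation: since $\Pi^h\circ R = R\circ \pi^h$ and $R$ is a bounded linear map, one gets $\sup_t \|\Pi^h(\Psi_t(Rp))\| \le \|R\|\cdot \sup_t\|\pi^h(\Phi_t(p))\| < \infty$, so membership in the global center manifold \emph{is} formally preserved by $R$. From $R(M^c)\subset M^{c'}$ together with $R(E^c)\subset E^{c'}$ and $R(E^h)\subset E^{h'}$, the relation $R\circ\phi = \psi\circ R$ between the graph maps follows from uniqueness of the splitting, and the rest of your argument (which matches the paper's concluding computation) then goes through.

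A secondary remark: your proposed Lyapunov--Perron repair invokes an ``$R$-equivariant cut-off,'' which imports precisely the difficulty the global hypothesis was designed to avoid. In the global setting no cut-off is needed at all (the nonlinearity is already assumed globally small), and the paper explains in Remark \ref{localproblem} that the choice of equivariant bump functions is exactly the unresolved obstruction to a \emph{local} quiver-equivariant center manifold theorem. So while an equivariant fixed-point construction could in principle be made to work here (and would be an alternative to the paper's characterization argument), as written your proposal both leaves the crucial inclusion unproven and gestures toward machinery whose problematic ingredient the lemma's hypotheses deliberately sidestep.
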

\begin{proof}
Let us define $F: E_1\times E_2 \to E_1 \times E_2$ by $F(x,y):=(f(x), g(x,y))$. Then equation (\ref{ffwpairAac}) can be written as $\frac{d}{dt}(x,y)=F(x,y)$. Recall from Lemma \ref{fflemma} that the feedforward structure of $F$ implies that $R: E_1\times E_2 \to E_1, (x,y)\mapsto x$ sends solutions of this ODE to solutions of the ODE
\begin{align}\label{smaller}
\frac{dX}{dt}=f(X)\, .
\end{align}
In other words, we have that 
$$R \circ F = f \circ R\, .$$
This clearly implies that $X_0:=R(x_0,y_0)=x_0$ is an equilibrium point of  (\ref{smaller}), and  if we write $L_{X_0}=Df(X_0)$ for the Jacobian of (\ref{smaller}) at $X_0$, then 
\begin{align}\label{linearconjugacy}
R \circ L_{(x_0, y_0)}   = L_{X_0} \circ R\, .
\end{align}
This follows from differentiating $R(F(x,y)) = f(R(x,y))$ at $(x,y)=(x_0, y_0)$. Let us decompose $E_1 = E^{c'}\oplus E^{h'}$ into the center and hyperbolic subspaces of $L_{X_0}$. Then it follows from (\ref{linearconjugacy}) that $R$ maps any generalised eigenspace of $L_{(x_0, y_0)}$ into the generalised eigenspace of $L_{X_0}$ with the same eigenvalue. It follows that  $R(E^c)\subset E^{c'}$ and $R(E^h)\subset E^{h'}$. Denoting by $\Pi^c: E_1 = E^{c'}\oplus E^{h'} \to E^{c'}$ the projection onto $E^{c'}$ along $E^{h'}$, we conclude that
$$\Pi^c \circ R = R \circ \pi^c \, .$$
The next step is to prove that $R$ sends the global center manifold of (\ref{ffwpairAac}) at $(x_0, y_0)$ to the global center manifold of (\ref{smaller}) at $X_0$.  %$$R (E^{x_0, y_0}_c) = E_c^{x_0} \ \mbox{and} \ R (E^{x_0, y_0}_h) = E_h^{x_0} $$
%where equality holds because $R$ is surjective. We conclude that $ $%In other words, $R$ restricts to a surjective map of center subspaces.
For this we recall that a solution $(x(t), y(t))$ lies in the global center manifold of (\ref{ffwpairAac}) if and only if 
$$\sup_{t\in \mathbb{R}} || \pi^h(x(t), y(t))|| < \infty\, ,$$
where we write $\pi^h = 1-\pi^c$. We similarly write $\Pi^h=1-\Pi^c$. If we define $X(t) := R(x(t), y(t)) = x(t)$ for such a solution, then clearly
$\Pi^h(X(t)) = \Pi^h(R(x(t), y(t)) = R (\pi^h(x(t),y(t)))$, and so
$$\sup_{t\in \mathbb{R}} || \Pi^h(X(t))|| \leq ||R||  \cdot \sup_{t\in \mathbb{R}}||  \pi^h(x(t), y(t))|| < \infty \, ,$$ 
where  $||R||$ denotes the operator norm of $R$ (which equals $1$ here). So $X(t)$ lies in the global center manifold of (\ref{smaller}). This proves that $R$ maps the center manifold of (\ref{ffwpairAac}) into the center manifold of (\ref{smaller}). 

Next, recall that the center manifolds of  (\ref{ffwpairAac})  and (\ref{smaller})  are the graphs of certain (finitely many times continuously differentiable) functions $\phi: E^c \to E^h$ and $\psi: E^{c'}\to E^{h'}$ respectively. In other words, for every $(x,y)$  in the center manifold of (\ref{ffwpairAac}) and $X$  in the center manifold of (\ref{smaller}), we have
\begin{align} \label{cmgraph}
 (x, y) & = \underbrace{(x^c, y^c)}_{\in E^c} +  \underbrace{\phi(x^c, y^c)}_{\in E^h} \, ,
\\ \label{cmgraph2}
  X &  = \underbrace{X^c}_{\in E^{c'}} + \underbrace{\psi(X^c)}_{\in E^{h'}}\, .
\end{align}
%when $X$ lies in the center manifold of (\ref{smaller}). 
Pick an $(x,y)$ in the center manifold of (\ref{ffwpairAac}). 
Applying $R$ to (\ref{cmgraph})  yields that 
$$x = x^c + R(\phi(x^c, y^c))\, .$$ 
Note that this $x$  lies in the center manifold of (\ref{smaller}) by the result above. We also have that $x^c = R(x^c, y^c) \in E^{c'}$ because $(x^c, y^c) \in E^c$ and $R(E^c)\subset E^{c'}$. Similarly, $R(\phi(x^c, y^c))\in E^{h'}$ because $\phi(x^c, y^c) \in E^h$ and $R(E^h)\subset E^{h'}$.  But this means  that $x^c$ is the center part of $x$ and $R(\phi(x^c, x^h))$ is its hyperbolic part. So  (\ref{cmgraph2}) gives that $R(\phi(x^c, y^c))$ must be equal to $\psi(x^c)=\psi(R(x^c, y^c))$.
%Projecting onto $E_c'$ we thus obtain that $\Pi_c(x) = x_c$. But it also holds that $x=R(x,y)$ lies in the center manifold of (\ref{smaller}), and therefore it follows from (\ref{cmgraph2}) that we may write $x= x_c +\psi(x_c) = x_c + \psi(R(x_c, y_c))$. This proves that $\psi(R(x_c, y_c)) = R(\phi(x_c, y_c))$. In other words, 
%Applying  $R \circ \pi_c$ to equation (\ref{cmgraph})  then yields  
%$$R(x_c, y_c) = R(\pi_c(x,y)) = \Pi_c (R (x,y)) = \Pi_c (X) = X_c  \, .$$ 
%By subsequently applying $R \circ \pi_h$ to equation (\ref{cmgraph}) we then find 
%$$R(\phi(x_c,y_c)) = R(\pi_h(x,y)) = \Pi_h (R (x,y)) = \Pi_h (X) = \psi (X_c)  = \psi(R(x_c,y_c))  \, .$$ 
%This proves that
This proves that 
 $$R \circ \phi = \psi \circ R \, .$$
%In particular, if $(x,y) = (x_c, y_c) + \phi(x_c, y_c)$, then in fact 
%$$x=R(x,y) = x_c + R\phi(x_c, y_c) = x_c + \psi(R(x_c, y_c)) = x_c + \psi(x_c)\, .$$ 
Next, let $(x(t), y(t))$ be an integral curve of $F$ lying inside the center manifold of (\ref{ffwpairAac}), and let us once again write 
$$(x(t), y(t)) =  (x^c(t), y^c(t))  +  \phi(x^c(t), y^c(t))\, .$$
Because  $\frac{d}{dt}(x(t),y(t)) = F(x(t), y(t))$, it then follows that
$$\frac{d}{dt}(x^c(t), y^c(t))  = (\pi^c\circ F) ( (x^c(t), y^c(t))+ \phi(x^c(t), y^c(t)))\, .$$
This shows that the restriction of $\pi^c$ to the center manifold sends integral curves of $F$ to integral curves of the vector field $F^c: E^{ c} \to E^{ c}$ defined by 
\begin{align}
F^c(x^c, y^c) & := (\pi^c \circ F)((x^c, y^c) + \phi( x^c, y^c))\, . \nonumber  %f^c(X^c)  & := (\Pi^c \circ f)(X^c + \psi(X^c))\, .
\end{align}
Similarly, the restriction of $\Pi^c$ to the center manifold of (\ref{smaller}) sends integral curves of $f$ to integral curves of $f^c: E^{c'}\to E^{c'}$ defined by
 \begin{align}\nonumber
f^c(X^c)  & := (\Pi^c \circ f)(X^c + \psi(X^c))\, .
\end{align}
Now we simply notice that 
\begin{align}\nonumber 
R(F^c(x^c, y^c)) =&  (R\circ \pi^c \circ F)((x^c, y^c) + \phi( x^c, y^c))  \\ \nonumber
 = & (\Pi^c \circ R \circ F)((x^c, y^c) + \phi( x^c, y^c))  \\ \nonumber
= & (\Pi^c \circ f \circ R)((x^c, y^c) + \phi( x^c, y^c))  \\ \nonumber
= & (\Pi^c \circ f) ( R (x^c, y^c) + R( \phi( x^c, y^c)))  \\ \nonumber
= & (\Pi^c \circ f) ( R(x^c, y^c) + \psi( R (x^c, y^c)))  \\ \nonumber
%= & (\Pi_c \circ f) ( x_c + \psi(  x_c, y_c))  \\ \nonumber
= & f^c(R(x^c, y^c)))\, ,
\end{align}
which proves that $$R\circ F^c = f^c \circ R\, . $$
In this last formula, $R$ in fact denotes the restriction $R:E^c \to E^{c'}$ given by $R(x^c, y^c)=x^c$. 
Lemma \ref{ffw} thus guarantees that $F^c$ is of the feedforward form $$F^c(x^c, y^c)=(f^c(x^c), g^c(x^c, y^c))$$ for some function $g^c:E^c\to E^c$. This finishes the proof. 
%\nonumber \\ \nonumber
%&  = \Pi_c f(X_c(t) +  \psi(X_c(t))) = \Pi_c f(x_c(t) +  \psi(x_c(t))) \, . \\
%\frac{dy_c(t)}{dt}  & =  \Pi_c g(x(t), y(t))   
%\nonumber \\ \nonumber
%&  = \Pi_c f(X_c(t) +  \psi(X_c(t))) = \Pi_c g((x_c,y_c) +  \phi(x_c(t), y_c(t)))) \, .
%\frac{dy_c(t)}{dt} & = \pi_c g(x(t), y(t)) = \pi_c g ((x_c(t), y_c(t))+ \phi(x_c(t), y_c(t)))
%\end{align}
%\begin{align}
%\frac{dx_c(t)}{dt}  & =  \Pi_c f(x(t)) = \Pi_c f(X_c(t) +  \psi(X_c(t))) = \Pi_c f(x_c(t) +  \psi(x_c(t))) \, . 
 %\frac{dy_c(t)}{dt} & = \pi_c g(x(t), y(t)) = \pi_c g ((x_c(t), y_c(t))+ \phi(x_c(t), y_c(t)))
%\end{align}
%This concludes the proof of (\ref{ffwpairAacred}), if we define $f_c(x_c) := \Pi_c f(x_c+  \psi(x_c))$ and $g_c(x_, y_c) = \pi_c g((x_c, y_c)+  \phi(x_c, y_c))$.
\end{proof}

\section{Quiver equivariant dynamical systems}\label{sec3}
The pair of ODEs (\ref{ffwpairA}), (\ref{ffwpairB}) is a simple example of a quiver equivariant dynamical system. We shall now give the general definition. In this paper, we only consider quivers with finitely many vertices and arrows, because this simplifies our proofs.
\begin{defi} \mbox{}\\ \vspace{-5mm}
\begin{itemize} 
\item[{\it i)}] A {\it quiver} is a directed (multi)graph 
$${\bf Q} = \{A\rightrightarrows^s_t V\}$$ 
 consisting of a finite set of arrows $A$, a finite set of vertices $V$, a source map $s: A \to V$ and a target map $t: A\to V$. 
\item[{\it ii)}] A {\it representation} ({\bf E}, {\bf R}) of a quiver {\bf Q} consists of a set {\bf E} of finite dimensional vector spaces $E_v$ (one for each vertex $v\in V$), and a set {\bf R} of linear maps 
$$R_{a}: E_{s(a)} \to E_{t(a)}\ \  \mbox{(one for each arrow}\ a\in A).$$
\item[{\it iii)}] A {\bf Q}-{\it equivariant map} {\bf F} of a representation ({\bf E}, {\bf R}) of a quiver {\bf Q} consists of a collection of maps $F_v: E_v\to E_v$ (one for each vertex $v\in V$) so that 
$$ F_{t(a)} \circ R_a = R_a \circ F_{s(a)} \ \ \mbox{for every arrow}\ a\in A\, .$$
We shall write ${\bf F}\in C^{\infty}({\bf E}, {\bf R})$ if $F_v \in C^{\infty}(E_v)$ for every $v\in V$. We shall sometimes refer to a {\bf Q}-equivariant map as a {\bf Q}-{\it equivariant vector field} or  a {\bf Q}-{\it equivariant dynamical system}.
%A linear quiver equivariant map is called an {\it quiver-endomorphism} and the collection of all quiver-endomorphisms is denoted ${\rm End}({\bf R})$.
\end{itemize}
\end{defi}
\noindent The following simple proposition expresses that quiver-equivariance is an intrinsic property.  Proposition \ref{Lieproperty} formulates the corresponding result for the Lie bracket.
\begin{prop}\label{obviouscomp}
Let $({\bf E}, {\bf R})$ be a representation of a quiver ${\bf Q}= \{A\rightrightarrows^s_t V\}$ and let ${\bf F}, {\bf G} \in C^{\infty}({\bf E}, {\bf R})$. Define the composition ${\bf F} \circ {\bf G}$ to consist of the maps $(F_v \circ G_v): E_v \to E_v$ (for $v\in V$). Then ${\bf F} \circ {\bf G} \in C^{\infty}({\bf E}, {\bf R})$.
\end{prop}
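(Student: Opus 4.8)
The plan is to verify the defining intertwining relation of part \emph{iii)} of the Definition arrow by arrow. Fix an arbitrary arrow $a\in A$ and abbreviate its source and target by $s=s(a)$ and $t=t(a)$. Writing $H_v:=F_v\circ G_v$ for the components of ${\bf F}\circ{\bf G}$, the goal is to establish that
$$H_t\circ R_a = R_a\circ H_s\, ,$$
that is, $(F_t\circ G_t)\circ R_a = R_a\circ(F_s\circ G_s)$. The remaining assertion, that each $H_v$ is smooth, will come for free.

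My approach is a short two-step chain of equalities that peels the linear map $R_a$ past one equivariant factor at a time. First I would invoke the ${\bf Q}$-equivariance of ${\bf G}$ in the form $G_t\circ R_a = R_a\circ G_s$, which lets me slide $R_a$ past the inner factor:
$$F_t\circ(G_t\circ R_a) = F_t\circ(R_a\circ G_s)\, .$$
Then I would reassociate and apply the ${\bf Q}$-equivariance of ${\bf F}$, namely $F_t\circ R_a = R_a\circ F_s$, to obtain
$$(F_t\circ R_a)\circ G_s = (R_a\circ F_s)\circ G_s = R_a\circ(F_s\circ G_s)\, .$$
Since $a$ was arbitrary, this yields the relation for every arrow, so ${\bf F}\circ{\bf G}$ is ${\bf Q}$-equivariant. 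Smoothness of each $H_v=F_v\circ G_v$ is then immediate, because a composition of $C^{\infty}$ maps is again $C^{\infty}$, whence ${\bf F}\circ{\bf G}\in C^{\infty}({\bf E},{\bf R})$.

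There is no genuine obstacle here: the argument is a purely formal manipulation of associative compositions. The only point requiring care is the bookkeeping of which vertex each map lives over, since $F$ and $G$ must be evaluated at the correct source or target vertex for the composite to be well defined, and the two equivariance identities have to be applied at matching arrows. Once the indices are tracked correctly the two displayed substitutions close the argument immediately.
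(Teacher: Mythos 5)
Your proof is correct and matches the paper's argument: both establish the arrow-wise identity $(F_{t(a)}\circ G_{t(a)})\circ R_a = R_a\circ (F_{s(a)}\circ G_{s(a)})$ by applying the two equivariance relations in succession (the paper simply reads the chain in the other direction, sliding $R_a$ past $F$ first and then past $G$), and both dismiss smoothness as immediate. There is nothing to add.
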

\begin{proof}
Smoothness of $({  F} _v\circ {  G}_v)$ is obvious. Now let $a\in A$ be an arrow. Then 
\begin{align}\nonumber
 R_a \circ (F_{s(a)} \circ G_{s(a)}) =   F_{t(a)} \circ R_a \circ G_{s(a)}  = (F_{t(a)} \circ  G_{t(a)}) \circ R_a \, .
\end{align}
\end{proof}
\noindent The next example shows that the feedforward system of Section \ref{sec2} constitutes a quiver equivariant dynamical system.
\begin{ex} \label{ffexamplequiver}
Consider a quiver {\bf Q} consisting of two vertices $V=\{v_1, v_2\}$ and three arrows $A=\{a_1, a_2, a_3\}$, where $s(a_1) = t(a_1) = v_1$ and $s(a_2)=v_1$ and $t(a_2)=v_2$ and $s(a_3) = t(a_3) = v_2$. Define
$$E_{v_1} = E_1\times E_2 \ \mbox{and} \ E_{v_2} = E_1$$ 
with $E_1$ and $E_2$ vector spaces, and
\begin{align} \nonumber %\label{R1}
  &R_{a_1}(x,y) = (x,y)\, , \\  \nonumber %\\ \label{R3}
   &R_{a_2}(x,y) = x\, , \\ \nonumber  %\\ \label{R3}
     &R_{a_3}(X) = X\, .\ %\\ \label{R3}
 \end{align}
Then the pair of  maps 
\begin{align} \nonumber& F_{v_1} = (F, G): E_{v_1} = E_1\times E_2 \to E_{v_1} = E_1\times E_2 , \\ \nonumber
&F_{v_2} = H: E_{v_2} = E_1 \to E_{v_2} = E_1
\end{align} 
is  {\bf Q}-equivariant if and only if $R_{a_2} \circ F_{v_1} = F_{v_2}\circ R_{a_2}$, that is, if
\begin{align} \nonumber
H(x) = & F_{v_2} (x) = F_{v_2} (R_{a_2}(x, y)) \\ \nonumber = & R_{a_2}  (F_{v_1}(x,y)) = R_{a_2}  (F(x,y), G(x,y)) = F(x,y)\, .
\end{align}
So {\bf Q}-equivariance just means that $F_{v_1}$ is of feedforward form.
\end{ex}

\begin{ex}
Let $E$ be a representation of a finite group $G$. This means that $E$ is a vector space and that for every $g\in G$ there is a (necessarily invertible) linear map $R_g: E\to E$, such that $R_e={\rm Id}_E$ and $R_{g_1} \circ R_{g_2} = R_{g_1g_2}$. 

Such a group representation can be thought of as a representation of a quiver with one vertex, say $V=\{v\}$, and exactly one arrow $a=a(g)$ (to and from $v$) for each $g\in G$.  This is done by defining $E_v:=E$ and $R_{a(g)} :=R_g$.  The quiver equivariant maps are then simply the maps $F:E\to E$ with 
$$F \circ R_{g}  = R_g \circ F \ \mbox{for all} \ g\in G\ .$$
So the quiver equivariant maps coincide with the usual $G$-equivariant maps. 
\end{ex}
\begin{ex}
As a straightforward generalisation of the previous example, one may study linear maps that are not invertible.  
Consider for example the map 
$$R: x\mapsto 0\ \mbox{from}\ \mathbb{R}\ \mbox{to}\ \mathbb{R}\, .$$
This map defines a representation of a quiver with just one vertex $v\in V$ and one arrow $a\in A$, where $E_v:=\mathbb{R}$ and $R_a:=R$.  

Note that an ODE $\frac{dx}{dt} = F(x)$ satisfies $F\circ R= R \circ F$ if and only if $F(0)=0$. So having this quiver symmetry is equivalent to having a steady state at the origin. 
Interestingly, this is the setting in which the  {\it transcritical bifurcation}
$$\frac{dx}{dt} = \lambda x \pm x^2$$
is the typical one-parameter bifurcation. Curiously, this shows that the transcritical bifurcation is  a generic quiver equivariant bifurcation.
\end{ex}

\begin{ex} \label{monoidex}
In \cite{fibr} it turned out natural to study dynamical systems that are equivariant under the action of a finite monoid $\Sigma$. A monoid is a set $\Sigma$ with an associative multiplication $(\sigma_1, \sigma_2)\mapsto \sigma_1 \sigma_2$ and a multiplicative unit $\sigma_0$. A representation of $\Sigma$ consists of  (not necessarily invertible)  linear maps $R_{\sigma}: E\to E$ on a vector space $E$, so that $R_{\sigma_0}={\rm Id}_{E}$ and $R_{\sigma_1} \circ R_{\sigma_2} = R_{\sigma_1\sigma_2}$. 

This setup arises for example when studying the network in Figure \ref{pictfundamental}. The figure displays a network with five nodes and a map $F = F(x_1, x_2, x_3, x_4, x_5)$ that is ``compatible'' with the structure of this network.

\vspace{-5mm}
\begin{figure}[h]\renewcommand{\figurename}{\rm \bf \footnotesize Figure} 
\begin{center}
\hspace{.4cm}
\begin{tabular}{p{4cm}p{4.5cm}} \hspace{.3cm}
 \begin{tikzpicture}[->, scale=1.9]
	% \tikzstyle{vertextype1}=[circle, draw, minimum size=20pt,inner sep=1pt]
	 \tikzstyle{vertextype2} = [circle, draw, fill=yellow, minimum size=15pt,inner sep=1pt]
	 %\tikzstyle{vertextype3} = [circle, draw, fill=yellow!25, minimum size=20pt,inner sep=1pt]
	% \tikzstyle{edgetype1} = [->, draw,line width=3pt,-,red!50]
	% \tikzstyle{edgetype2} = [draw,thick,-,black]
	 \node at (1,1.4) {};
	 \node at (1,-.3) {};
	 % \node at (1,.65) {\Large {${\rm \bf C}$}};
	 \node[vertextype2] (v1) at (1.85,0) {$1$};
	 \node[vertextype2] (v2) at (.15,0) {$2$};
	 \node[vertextype2] (v3) at (1.5,.35) {$3$};
	  \node[vertextype2] (v4) at (1,1.05) {$4$};
	 \node[vertextype2] (v5) at (.5,.35) {$5$};
	 % \node at (.5,.3) {\Large {$\widetilde {\rm \bf A}$}};
	 %\node[vertextype2] (v1) at (-.5,0) {$\sigma_1$};
	 %\node[vertextype3] (v2) at (.5,1.2) {$\sigma_2$};
	 %\node[vertextype1] (v3) at (1,0) {$\sigma_3$};
	  %\node[vertextype3] (v4) at (.5,.7) {$\sigma_4$};
	 %\node[vertextype2] (v5) at (0,0) {$\sigma_5$};
	 %\node at (3.3,.5) {\Large {\bf A}};
	% \node[vertextype1] (v3a) at (3.8,.2) {$x_3$};
	 %\node[vertextype3] (v2a) at (3.3, .9) {$x_2$};
	 %\node[vertextype2] (v1a) at (2.8,.2) {$x_1$};
	  %\draw [->,decorate,decoration=snake, thick] (1.4,.6) -- (2.5,.6) node [above, midway] {{\footnotesize $1,5\mapsto 1, 3\mapsto 3$}} node [below, midway] {{\footnotesize $2,4\mapsto 2$}};
	\path[]
	%\draw (v0) -- (v2);
	(v2) edge [bend right, blue, thick] node {} (v1)
	(v4) edge [thick, bend right, blue] node {} (v2)
	(v5) edge [thick, blue] node {} (v3)
	(v4) edge [loop left, thick, blue] node {} (v4)
	(v4) edge [thick, blue] node {} (v5)
	(v3) edge [red, thick, dashed] node {} (v1)
	(v3) edge [red, thick, bend left, dashed] node {} (v2)
	(v3) edge [loop right, red, thick, dashed] node {} (v3)
	(v3) edge [red, thick, dashed] node {} (v4)
	(v3) edge [bend left, red, thick, dashed] node {} (v5);
	%(v2) edge [bend right, blue, thick] node {} (v1)
	%(v4) edge [thick, blue] node {} (v2)
	%(v5) edge [thick, blue] node {} (v3)
	%(v4) edge [loop left, thick, blue] node {} (v4)
	%(v4) edge [thick, blue] node {} (v5)
	%(v3) edge [bend left, red, thick, dashed] node {} (v1)
	%(v3) edge [red, thick, bend right, dashed] node {} (v2)
	%(v3) edge [loop right, red, thick, dashed] node {} (v3)
	%(v3) edge [red, thick, dashed] node {} (v4)
	%(v3) edge [bend left, red, thick, dashed] node {} (v5);
	%(v1a) edge [thick, blue] node {} (v3a)
	%(v2a) edge [loop left, blue, thick] node {} (v2a)
	%(v2a) edge [thick, blue] node {} (v1a)
	%(v3a) edge [loop right, red, dashed, thick] node {} (v3a)
	%(v3a) edge [thick, red, dashed] node {} (v2a)
	%(v3a) edge [bend left, red, thick, dashed] node {} (v1a);
	 %\draw[-latex, thick, red] (v1) -- (v2);
	 %\draw[edgetype1] (v0) -- (v1);
	 %\draw[edgetype2] (v0) -- (v2);
 \end{tikzpicture} %\vspace{-.3cm}
 &\vspace{.2cm} 
 \begin{equation}   \nonumber %\tag{A} % \left\{
F\left( \begin{array}{c} 
x_1 \\ x_2 \\ x_3 \\ x_4 \\ x_5
\end{array} 
\right) = \left(
 \begin{array}{l}   
    f(x_{1}, \bl{ x_{2}}, \ro{x_{3}}) \\%, x_2, x_3) \\
 f(x_{2}, \bl{x_{4}}, \ro{x_{3}}) \\%, x_2, x_3) \\
  f(x_{3}, \bl{x_{5}}, \ro{x_{3}})\\ %, x_2, x_3)
  f(x_{4}, \bl{x_{4}}, \ro{x_{3}})\\
 f(x_{5}, \bl{x_{4}}, \ro{x_{3}})
 \end{array}\right) 
 \hspace{-.8cm}
 \end{equation} 
\end{tabular} 
\vspace{-3mm}
 \caption{\footnotesize {\rm A network map with a monoid of $5$ symmetries.}}
 \vspace{-.5cm}
\label{pictfundamental}
\end{center}
 \end{figure}
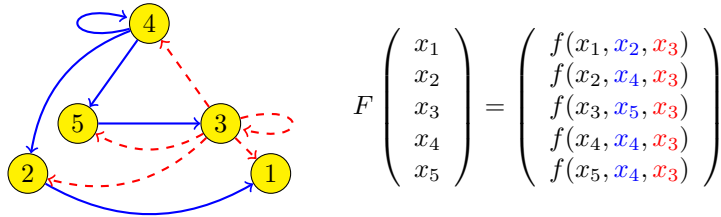
It turns out that  an $F$ of this form always commutes with the maps 
\begin{align}\nonumber
& R_{\sigma_0}(x_1, x_2, x_3, x_4, x_5) =  (x_1, x_2, x_3, x_4, x_5) \, ,\\ \nonumber
& R_{\sigma_1}(x_1, x_2, x_3, x_4, x_5) =  (x_2, x_4, x_3, x_4, x_5)\, , \\ \nonumber
& R_{\sigma_2}(x_1, x_2, x_3, x_4, x_5) = (x_3, x_5, x_3, x_4, x_5) \, , \\ \nonumber
& R_{\sigma_3}(x_1, x_2, x_3, x_4, x_5) =  (x_4, x_4, x_3, x_4, x_5)\, , \\ \nonumber
& R_{\sigma_4}(x_1, x_2, x_3, x_4, x_5) = (x_5, x_4, x_3, x_4, x_5)\, .
\end{align}
These maps together form a representation of a monoid $\Sigma$ with $5$ elements.  In \cite{fibr} this representation was used to classify the bifurcations that occur in the dynamics of the ODE $\frac{dx}{dt} = F(x)$. We will not discuss these results in any detail here.
% Interestingly, the representation theory for semigroups is less well understood than that for groups. 
Just like for groups, one may think of a representation  of a monoid as a special case of a representation of a quiver.
\end{ex}
\begin{remk}
The notion of {\it interior network symmetry} was defined in \cite{pivato2}. We will not discuss interior symmetry in any detail here, but we would like to point out that interior symmetry is equivalent to a special type of quiver symmetry, for a quiver with two vertices and a possibly quite large number of arrows. This fact was proved in Section 9 of \cite{fibr}. 
\end{remk}
\noindent In the coming sections we provide more examples of dynamical systems with quiver-symmetry. We start by generalising Example \ref{ffexamplequiver} to include more general network systems. %In fact, we will show that the entire Morse-decomposition of a network dynamical system can be described by a quiver. 

\section{The quiver of subnetworks}\label{sec4}

In this section and  the next we consider dynamical systems with the structure of an interacting network. We apologise for the somewhat heavy notation in this section, which we found impossible to avoid.  

We start by letting ${\bf N} = \{ E \rightrightarrows^s_t N\}$ be a directed graph consisting of a finite number of nodes $n \in N$ and directed edges $e \in E$ (the letter {\bf N} stands for {\it network}). This ${\bf N}$ should not be thought of as a quiver (we shall use ${\bf N}$ to define a quiver ${\bf SubQ}({\bf N})$ later) but as the network structure of an iterated map or ODE. More precisely, we assume that for each   $n \in N$ we are given a vector space  $E_n$ (the so-called ``internal phase space'' of this node) and a map 
$$F_{n}: \bigoplus_{e \in E \, :\,  t(e)  = n} E_{s(e)} \to E_{n}\, .$$
So $F_{n}$ depends only on those $x_{m}$ for which there is an edge $e$ from ${m}$ to ${n}$. 
Together the $F_n$ define a {\it network map}
$F^{\bf N}: \bigoplus_{{  m} \in {  N}} E_{  m}  \to  \bigoplus_{{  m} \in {  N}} E_{  m}$
given by 
$$F^{\bf N}_n\left( \, \bigoplus_{{  m} \in N} x_m\, \right) = F_{  n}\left(  \bigoplus_{{ e} \in { E}\, : \, t(e) = n } x_{ s(e)}   \right) \, .$$
One could say that this $F^{\bf N}$ is ``compatible'' with the network  ${\bf N}$. We may  use $F^{\bf N}$ to define a ``network dynamical system'' on the ``total phase space'' $\bigoplus_{{  m} \in {  N}} E_{  m}$, for example the iteration $x^{(n+1)} = F^{\bf N}(x^{(n)})$ or the flow of the ODE $\frac{dx}{dt} = F^{\bf N}(x)$. %In the remainder of this section, we shall not make any further assumptions of $F$ (e.g., that $F_m=F_n$ for some $m$ and $n$).  

\begin{ex}\label{exffsimple}
The network {\bf N} in Figure \ref{pict2} consist of $2$ nodes (labeled $1$ and $2$) and $3$ arrows.  The network maps compatible with this network are the maps of the form 
$$F^{\bf N}(x_1, x_2) = (F_1(x_1), F_2(x_1, x_2))\, .$$
These are precisely the feedforward maps  of Example \ref{ffexamplequiver}.
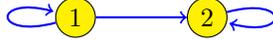
\begin{figure}[h]\renewcommand{\figurename}{\rm \bf \footnotesize Figure} 
\begin{center}
\hspace{0cm}
  \hspace{-.4cm} 
 \begin{tikzpicture}[->, scale=1.75]
	 \tikzstyle{vertextype2} = [circle, draw, fill=yellow, minimum size=15pt,inner sep=1pt]
	 \node[vertextype2] (v1) at (0,0) {$1$};
	 \node[vertextype2] (v2) at (1,0) {$2$};
	%  \node at (.4,-2.35) {\Large {\bf C}};
	 %\node at (3.3,.5) {\Large {\bf A}};
	% \node[vertextype1] (v3a) at (3.8,.2) {$x_3$};
	 %\node[vertextype3] (v2a) at (3.3, .9) {$x_2$};
	 %\node[vertextype2] (v1a) at (2.8,.2) {$x_1$};
	  %\draw [->,decorate,decoration=snake, thick] (1.4,.6) -- (2.5,.6) node [above, midway] {{\footnotesize $1,5\mapsto 1, 3\mapsto 3$}} node [below, midway] {{\footnotesize $2,4\mapsto 2$}};
	\path[]
	%\draw (v0) -- (v2);
	(v1) edge [loop left, blue, thick] node {} (v1)
	(v2) edge [loop right, blue, thick] node {} (v2)
	(v1) edge [blue, thick] node {} (v2);   
 \end{tikzpicture} 
 \vspace{-.3cm}
 \caption{\footnotesize {\rm A feedforward network with two nodes.}}
 \vspace{-.6cm}
\label{pict2}
\end{center}
 \end{figure}
\end{ex}
\begin{ex}\label{interestingffex}
Let ${\bf N}$ be the network consisting of $5$ nodes (labeled $1, \ldots, 5$) and $12$ arrows as defined in Figure \ref{pict1}. 
%\vspace{-.4cm}
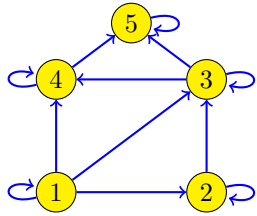
\begin{figure}[h]\renewcommand{\figurename}{\rm \bf \footnotesize Figure} 
\begin{center}
\hspace{0cm}
  \hspace{-.4cm} 
 \begin{tikzpicture}[->, scale=1]
	 \tikzstyle{vertextype1}=[circle, draw, minimum size=20pt,inner sep=1pt]
	 \tikzstyle{vertextype2} = [circle, draw, fill=yellow, minimum size=15pt,inner sep=1pt]
	 \tikzstyle{vertextype3} = [circle, draw, fill=yellow!25, minimum size=20pt,inner sep=1pt]
	 \tikzstyle{edgetype1} = [->, draw,line width=3pt,-,red!50]
	 \tikzstyle{edgetype2} = [draw,thick,-,black]
	 % \node at (1,1.3) {};
	% \node at (1,-2.8) {};
	 % \node at (1.5,-.5) {\Large {${\rm \bf N}$}};
	 \node[vertextype2] (v1) at (0,0) {$1$};
	 \node[vertextype2] (v2) at (2,1.5) {$3$};
	 \node[vertextype2] (v3) at (0,1.5) {$4$};
	  \node[vertextype2] (v4) at (2,0) {$2$};
	 \node[vertextype2] (v5) at (1,2.25) {$5$};
	%  \node at (.4,-2.35) {\Large {\bf C}};
	 %\node at (3.3,.5) {\Large {\bf A}};
	% \node[vertextype1] (v3a) at (3.8,.2) {$x_3$};
	 %\node[vertextype3] (v2a) at (3.3, .9) {$x_2$};
	 %\node[vertextype2] (v1a) at (2.8,.2) {$x_1$};
	  %\draw [->,decorate,decoration=snake, thick] (1.4,.6) -- (2.5,.6) node [above, midway] {{\footnotesize $1,5\mapsto 1, 3\mapsto 3$}} node [below, midway] {{\footnotesize $2,4\mapsto 2$}};
	\path[]
	%\draw (v0) -- (v2);
	(v1) edge [loop left, blue, thick] node {} (v1)
	(v2) edge [loop right, blue, thick] node {} (v2)
	(v3) edge [loop left, blue, thick] node {} (v3)
	(v4) edge [loop right, blue, thick] node {} (v4)
	(v5) edge [loop right, blue, thick] node {} (v5)
	(v1) edge [thick, blue] node {} (v2)
	(v1) edge [thick, blue] node {} (v4)
	(v1) edge [thick, blue] node {} (v3)
	(v2) edge [thick, blue] node {} (v5)
	(v4) edge [thick, blue] node {} (v2)
	(v2) edge [thick, blue] node {} (v3)
	%(v4) edge [blue, thick] node {} (v5)
	(v3) edge [blue, thick] node {} (v5);
	%(v1) edge [blue, thick, dashed] node {} (v5);
	%(v1a) edge [thick, blue] node {} (v3a)
	%(v2a) edge [loop left, blue, thick] node {} (v2a)
	%(v2a) edge [thick, blue] node {} (v1a)
	%(v3a) edge [loop right, red, dashed, thick] node {} (v3a)
	%(v3a) edge [thick, red, dashed] node {} (v2a)
	%(v3a) edge [bend left, red, thick, dashed] node {} (v1a);
	 %\draw[-latex, thick, red] (v1) -- (v2);
	 %\draw[edgetype1] (v0) -- (v1);
	 %\draw[edgetype2] (v0) -- (v2);
	%\draw (v0) -- (v2);   
 \end{tikzpicture} 
 \vspace{-.3cm}
 \caption{\footnotesize {\rm A feedforward type network with five nodes.}}
 \vspace{-.4cm}
\label{pict1}
\end{center}
 \end{figure} \\
\noindent Then any network map $F^{\bf N}$ takes the form 
\begin{align}\label{networkform}
F^{\bf N}\left( \begin{array}{l} x_1\\ x_2 \\ x_3 \\ x_4 \\ x_5 \end{array} \right) = 
\left( 
\begin{array}{l} 
  F_1(x_1) \\
  F_2(x_1, x_2) \\
  F_3(x_1, x_2, x_3) \\
 F_4(x_1, x_3, x_4) \\
  F_5(x_3, x_4, x_5) \\
\end{array}
\right)\, .
\end{align}
Thus $F^{\bf N}$ has a rather particular feedforward structure. Note also that when $F^{\bf N}$ and $G^{\bf N}$ are two such network maps, then their composition will have the form
\begin{align}
(F^{\bf N}\circ G^{\bf N}) \! \left( \!\!\! \begin{array}{l} x_1\\ x_2 \\ x_3 \\ x_4 \\ x_5 \end{array} \!\!\! \right) \! = \!  
\left(  \!\!\!
\begin{array}{l} \nonumber
  F_1(G_1(x_1)) \\
  F_2(G_1(x_1), G_2(x_1, x_2)) \\
  F_3(G_1(x_1), G_2(x_1,x_2), G_3(x_1, x_2, x_3)) \\
 F_4(G_1(x_1), G_3(x_1, x_2, x_3), G_4(x_1, x_3, x_4)) \\
  F_5(G_3(x_1, x_2, x_3), G_4(x_1, x_3, x_4), G_5(x_3, x_4, x_5) ) \\
\end{array} \!\!\!
\right) .
\end{align}
This shows that $(F^{\bf N}\circ G^{\bf N})_4(x)$ depends explicitly on $x_2$, while $F_4(x)$ and $G_4(x)$ do not. Similarly, $(F^{\bf N}\circ G^{\bf N})_5(x)$ depends explicitly on $x_1, x_2$, while $F_5(x)$ and $G_5(x)$ do not. So we see that the network structure of $F^{\bf N}$ and $G^{\bf N}$ is destroyed when we compose them. On the other hand, we also observe that a large part of the network structure of $F^{\bf N}$ and $G^{\bf N}$ remains intact in  $F^{\bf N}\circ G^{\bf N}$.
\end{ex}
\noindent In the remainder of this section we will show that network maps admit a specific quiver symmetry. This will clarify which characteristics of the network structure will survive if we, for example, compose network maps. We start with the definition of a subnetwork.
\begin{defi}
Let ${\bf N} = \{ E \rightrightarrows^s_t N\}$ be a network and let $N'\subseteq N$. Assume that for every $e\in E$ with $t(e)\in N'$ it holds that $s(e)\in N'$. Define 
$$E' := \{e\in E \, : \, s(e), t(e) \in N'\}\, .$$     
Then ${\bf N}' = \{E' \rightrightarrows^s_t N' \}$ is called a {\it subnetwork} of ${\bf N}$. We shall write ${\bf N'} \sqsubseteq {\bf N}$. 
\end{defi}
\begin{remk}
\noindent The relation $\sqsubseteq $ defines a partial order on the set of subnetworks of ${\bf N}$. Indeed, ${\bf N}' \sqsubseteq {\bf N}'$ for all ${\bf N}' \sqsubseteq {\bf N}$ (reflexivity), ${\bf N}' \sqsubseteq {\bf N}''$ and ${\bf N}'' \sqsubseteq {\bf N}'$ imply ${\bf N}' = {\bf N}''$ (antisymmetry) and ${\bf N}''' \sqsubseteq {\bf N}''$ and ${\bf N}'' \sqsubseteq {\bf N}'$ together imply that ${\bf N}''' \sqsubseteq {\bf N}'$ (transitivity).
%For any two subnetworks ${\bf N}', {\bf N}'' \sqsubseteq {\bf N}$, the intersection ${\bf N}' \cap {\bf N}''$ and union ${\bf N}' \cup {\bf N}''$ define subnetworks as well (taking the intersection and union of their nodes and selecting appropriate arrows). These operations together give the set of subnetworks of {\bf N} the structure of a lattice. We will not use this lattice structure in this paper though.
\end{remk}
 %For any two nodes $n_1, n_2\in N$ let us write $n_1 \leq n_2$ if there is a directed path from $n_2$ to $n_1$.  Allowing for paths of length zero, it follows that $n\leq n$ for every vertex $n$ in $N$ (reflexivity) and that $n_1 \leq n_2$ and $n_2 \leq n_3$ together imply $n_1 \leq n_3$ (transitivity). In other words, the relation $\leq$ defines a pre-order on the nodes of {\bf N}. 
%Conversely, for a given set ${\bf N}$ with pre-order $\leq$, we can define a directed graph with vertex set ${\bf N}$ by agreeing that there is an arrow from $n_2\in{\bf N}$ to $n_1\in{\bf N}$ if and only if $n_1\leq n_2$. 
%For $n\in { N}$ we now define the superset $${N}_{n} := \{m\in { N}\, | \, n\leq m\} \subset { N} \, $$
%to be the subset of ${N}$   consisting of those $m\in {N}$ from which there is a directed path to $n$. 
\noindent We shall use the subnetworks of {\bf N} to define a quiver  as follows. 
\begin{defi}
Let ${\bf N}$ be a network. The quiver ${\bf SubQ}({\bf N}) = \{A \rightrightarrows^s_t V\}$ of subnetworks of ${\bf N}$ has as its  vertices the nonempty subnetworks of ${\bf N}$, i.e.,
$$V=\{ {\bf N}'  \, | \, \emptyset \neq {\bf N}' \sqsubseteq  {\bf N}\, \} \, .$$ 
There is exactly one arrow $a\in A$  with $s(a)={\bf N}'$ and $t(a)={\bf N}''$  if ${\bf N}'' \sqsubseteq {\bf N}'$.
 \end{defi}
\noindent A representation of ${\bf SubQ}({\bf N})$ can  be constructed in the following straightforward manner. Recall that for every $n\in {N}$  there is a vector space $E_n$. We now set
 $$E_{{\bf N}'} := \bigoplus_{m \in N'} E_{m} $$
 and we define, for the arrow $a \in A$ from ${\bf N}'$ to ${ \bf N}''$ (so assuming that $N'' \subseteq N'$),
 \begin{align}\label{Radef}
 R_{a}:  E_{{\bf N}'}   \to E_{{\bf  N}''}   \ \mbox{by} \ R_a\left(\bigoplus_{m \in N'} x_m \right) : = \bigoplus_{m\in N''} x_m \, . 
 \end{align}
 So $R_a$ ``forgets'' the states $x_m$ with $m\in N' \backslash N''$. Before we continue to explain why these definitions are useful, let us briefly return to our two examples.
  \begin{ex}
 Let ${\bf N}$ be the network of Example \ref{exffsimple} and Figure \ref{pict2}. It has two nonempty subnetworks, which we call ${\bf N}_1$ and ${\bf N}_2$. Figure \ref{pict3} depicts the quiver ${\bf SubQ}({\bf N})$. The arrows in the quiver that express the subnetwork relations ${\bf N}_1 \sqsubseteq {\bf N}_1, {\bf N}_1 \sqsubseteq {\bf N}_2$ and ${\bf N}_2 \sqsubseteq {\bf N}_2$ are drawn as snaking arrows. 
 \begin{figure}[h]\renewcommand{\figurename}{\rm \bf \footnotesize Figure} 
 \begin{center}
\hspace{0cm}
  \hspace{-.4cm} 
 \begin{tikzpicture}[->, scale=1.75]
	 \tikzstyle{vertextype2} = [circle, draw, fill=yellow, minimum size=15pt,inner sep=1pt]
	 \node[vertextype2] (v1) at (0,0) {$1$};
	 \node[vertextype2] (v2) at (1,0) {$2$};
	  \node[vertextype2] (v3) at (5,0) {$1$};
	  \node at (.5,-.4) {\Large ${\bf N}_2$};
	   \node at (5,-.4) {\Large ${\bf N}_1$};
	 %\node at (3.3,.5) {\Large {\bf A}};
	% \node[vertextype1] (v3a) at (3.8,.2) {$x_3$};
	 %\node[vertextype3] (v2a) at (3.3, .9) {$x_2$};
	 %\node[vertextype2] (v1a) at (2.8,.2) {$x_1$};
	% \draw [->,decorate,decoration=snake, thick] (3.5,0) -- (2.5,0);
	 %\draw [->,decorate,decoration=snake, thick, bend right] (.5,.5) -- (1,.5);
	 % \draw [->,decorate,decoration=snake, thick] (2.5,0) -- (1.5,0) node [above, midway] {a};
	\path[]
	%\draw (v0) -- (v2);
	(2,0) edge [thick, decorate, decoration=snake, gray]  node [above, midway] {$a_2\ $} (4,0) 
	(0,.3) edge [thick, decorate, decoration=snake, bend left=110, gray] node [below, midway] {$a_1$} (1,.3)
	(4.5,.3) edge [thick, decorate, decoration=snake, bend left=110, gray] node [below, midway] {$a_3$} (5.5,.3)
	(v1) edge [loop left, blue, thick] node {} (v1)
	(v2) edge [loop right, blue, thick] node {} (v2)
	(v1) edge [blue, thick] node {} (v2)
	(v3) edge [blue, thick, loop left] node {} (v3);   
	%(v1) edge [loop left, blue, thick, decoration=snake] node {} (v1)
 \end{tikzpicture} 
 \vspace{-.3cm}
 \caption{\footnotesize {\rm  Subnetwork quiver for the feedforward network in Figure \ref{pict2}.}}
 \vspace{-.3cm}
\label{pict3}
\end{center}
 \end{figure}
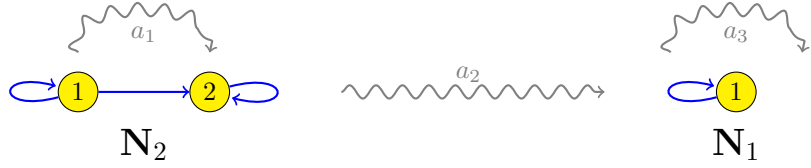
  It should be clear that the linear maps defining the representation are given by $R_{a_1}(x_1, x_2) = (x_1, x_2)$, $R_{a_2}(x_1, x_2) = x_1$ and $R_{a_3}(x_1) = x_1$. 
 \end{ex}
 
 \begin{ex}\label{ex2quiver}
 Let {\bf N} be the network of Example \ref{interestingffex} as depicted in Figure \ref{pict1}. It has five nonempty subnetworks, which we depict in Figure \ref{pict4}. The figure also depicts some (but not all) of the arrows in the subnetwork quiver. 
 \begin{figure}[h]\renewcommand{\figurename}{\rm \bf \footnotesize Figure} 
\begin{center}
\hspace{0cm}
  \hspace{-.4cm} 
 \begin{tikzpicture}[->, scale=.75]
	 	 \tikzstyle{vertextype2} = [circle, draw, fill=yellow, minimum size=10pt,inner sep=1pt]

	 \node[vertextype2] (v1) at (-1,0) {$1$};
	 \node[vertextype2] (v2) at (1,1.25) {$3$};
	 \node[vertextype2] (v3) at (-1,1.25) {$4$};
	  \node[vertextype2] (v4) at (1,0) {$2$};
	 \node[vertextype2] (v5) at (0,2) {$5$};
	 
	  \node[vertextype2] (v6) at (11.5,.25) {$1$};

	   \node[vertextype2] (v7) at (5.25,3.5) {$1$};
	 \node[vertextype2] (v8) at (7.25,4.75) {$3$};
	 \node[vertextype2] (v77) at (5.25,4.75) {$4$};
	  \node[vertextype2] (v88) at (7.25,3.5) {$2$};

	%   \node[vertextype2] (v7) at (6.25,3.5) {$1$};
	 %\node[vertextype2] (v8) at (6.25 ,4.92) {$2$};
	 
	   \node[vertextype2] (v9) at (5.25,-2.75) {$1$};
	 \node[vertextype2] (v10) at (7.25,-2.75) {$2$};

 	\node[vertextype2] (v11) at (5.25,0) {$1$};
	 \node[vertextype2] (v12) at (7.25,1.25) {$3$};
	  \node[vertextype2] (v14) at (7.25,0) {$2$};
	 %\node[vertextype2] (v15) at (8,1) {$5$};
	 
	   \node at (12.25,.25) {${\bf N}_1$};
	    \node at (5.8,4.3) {${\bf N}_4$};
	      \node at (6.25,-2.375) {${\bf N}_2$};
	       \node at (-.45,.775) {${\bf N}_5$};
	         \node at (6,1) {${\bf N}_3$};
	\path[]
	(v1) edge [loop left, blue, thick] node {} (v1)
	(v2) edge [loop right, blue, thick] node {} (v2)
	(v3) edge [loop left, blue, thick] node {} (v3)
	(v4) edge [loop right, blue, thick] node {} (v4)
	(v5) edge [loop right, blue, thick] node {} (v5)
	(v1) edge [thick, blue] node {} (v2)
	(v2) edge [thick, blue] node {} (v3)
	(v1) edge [thick, blue] node {} (v4)
	(v1) edge [thick, blue] node {} (v3)
	(v2) edge [thick, blue] node {} (v5)
	(v4) edge [thick, blue] node {} (v2)
	%(v4) edge [blue, thick] node {} (v5)
	(v3) edge [blue, thick] node {} (v5)

	(v7) edge [loop left, blue, thick] node {} (v1)
	(v8) edge [loop right, blue, thick] node {} (v2)
	(v77) edge [loop left, blue, thick] node {} (v3)
	(v88) edge [loop right, blue, thick] node {} (v4)
	%(v5) edge [loop right, blue, thick] node {} (v5)
	(v7) edge [thick, blue] node {} (v88)
	(v88) edge [thick, blue] node {} (v8)
	(v7) edge [thick, blue] node {} (v8)
	(v7) edge [thick, blue] node {} (v77)
	(v8) edge [thick, blue] node {} (v77)
	%(v4) edge [thick, blue] node {} (v2)
	%(v4) edge [blue, thick] node {} (v5)
	%(v3) edge [blue, thick] node {} (v5)

	(v6) edge [loop left, blue, thick] node {} (v6)
	
	%(v7) edge [loop left, blue, thick] node {} (v7)
	%(v8) edge [loop left, blue, thick] node {} (v8)
	%(v7) edge [thick, blue] node {} (v8)
	
	(v9) edge [loop left, blue, thick] node {} (v9)
	(v10) edge [loop right, blue, thick] node {} (v10)
	(v9) edge [thick, blue] node {} (v10)
	
	(v11) edge [loop left, blue, thick] node {} (v11)
	(v12) edge [loop right, blue, thick] node {} (v12)
	(v14) edge [loop right, blue, thick] node {} (v14)
	%(v15) edge [loop right, blue, thick] node {} (v15)
	(v11) edge [thick, blue] node {} (v12)
	(v11) edge [thick, blue] node {} (v14)
	%(v12) edge [thick, blue] node {} (v15)
	(v14) edge [thick, blue] node {} (v12)
	%(v14) edge [blue, thick] node {} (v15)
	
	(1.9,.25) edge [thick, decorate, decoration=snake, gray]  node [below, midway] {$a_2$} (4.25,.25) 
	(6.3,-.4) edge [thick, decorate, decoration=snake, gray] node [left, midway] {$a_5$} (6.3,-2)
	(6.3,3) edge [thick, decorate, decoration=snake, gray] node [left, midway] {$a_4$} (6.3,1.4)
	(8.125,.25) edge [thick, decorate, decoration=snake, gray] node [below, midway] {$a_7$} (10.6,.25)
	%(1.9,.75) edge [thick, decorate, decoration=snake, bend left=42] node [below, midway] {$a_1$} (10.6,0.75)
	(1.5,-.5) edge [thick, decorate, decoration=snake, gray] node [below, midway] {$a_3\ $} (4.375,-2.5)
	(8.125,-2.5) edge [thick, decorate, decoration=snake, gray] node [below, midway] {$\ a_8$} (11,-.5)
	(1.5,1.75) edge [thick, decorate, decoration=snake, gray] node [above, midway] {$a_1\ $} (4.2,3.5)
	(8.125,3.5) edge [thick, decorate, decoration=snake, gray] node [above, midway] {$\ \ a_6$} (11,1.25)
	;
 \end{tikzpicture} 
 \vspace{-.2cm}
 \caption{\footnotesize {\rm The subnetwork quiver for the network in Figure \ref{pict1}. Loops from ${\bf N}_i$ to ${\bf N}_i$ are not drawn. Neither are the compositions of arrows that are already depicted. }}
 \vspace{-.6cm}
\label{pict4}
\end{center}
 \end{figure}
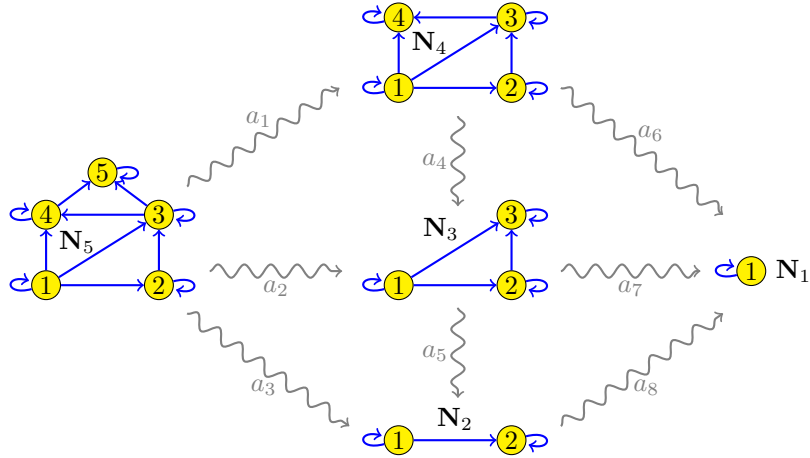 
 The maps $R_{a_i}$ are given by
\begin{align}\nonumber
\begin{array}{ll}
R_{a_1}(x_1, x_2, x_3, x_4, x_5) & =  (x_1, x_2, x_3, x_4),\\
R_{a_2}(x_1, x_2, x_3, x_4, x_5) & =  (x_1, x_2, x_3),\\
R_{a_3}(x_1, x_2, x_3, x_4, x_5) & =  (x_1, x_2),\\
R_{a_4}(x_1, x_2, x_3, x_4) & =  (x_1, x_2, x_3),\\
R_{a_5}(x_1, x_2, x_3) & =  (x_1, x_2),\\
R_{a_6}(x_1, x_2, x_3, x_4) & =  x_1,\\
R_{a_7}(x_1, x_2, x_3) & =  x_1,\\
R_{a_8}(x_1, x_2) & =  x_1.\\
\end{array}
\end{align}
 \end{ex}
\noindent The following result reveals the dynamical meaning of the quiver ${\bf SubQ}({\bf N})$.
\begin{lem}\label{obvious}
Let ${\bf N} = \{E\rightrightarrows^s_t N\}$ be a network and 
$F^{\bf N}:  E_{\bf N} \to E_{\bf N}$ a network map, i.e., it is of the form
$$F^{\bf N}_n\left( \, \bigoplus_{{  m} \in N} x_m\, \right) = F_{  n}\left(  \bigoplus_{{ e} \in { E}\, : \, t(e) = n } x_{ s(e)}   \right) \ \mbox{for all} \ n\in N \, .$$
For any subnetwork ${\bf N}' \sqsubseteq {\bf N}$ define
$F^{{\bf N}'} : E_{{\bf N}'} \to E_{{\bf N}'}$ by 
$$F^{{\bf N}'}_n \left(  \bigoplus_{m\in {\bf N}'} x_m \right) := F_n\left(  \bigoplus_{e \in E \, : \, t(e) = n} x_{s(e)} \right)\ \mbox{for all}\ n\in N'\, .$$
Then  these $F^{{\bf N}'}$ together define a ${\bf SubQ}({\bf N})$-equivariant map.
\end{lem}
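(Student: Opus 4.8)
The plan is to unwind the definition of $\mathbf{SubQ}(\mathbf{N})$-equivariance and verify it by a direct componentwise computation. First I would record the combinatorics: a vertex of $\mathbf{SubQ}(\mathbf{N})$ is a nonempty subnetwork, and an arrow $a$ with $s(a) = {\bf N}'$ and $t(a) = {\bf N}''$ exists precisely when ${\bf N}'' \sqsubseteq {\bf N}'$, in which case $R_a$ is the forgetful projection of (\ref{Radef}). I would also check that each $F^{{\bf N}'}$ is genuinely well defined: for $n \in N'$ the defining property of a subnetwork guarantees that every edge $e$ with $t(e) = n$ satisfies $s(e) \in N'$, so the argument $\bigoplus_{e \,:\, t(e)=n} x_{s(e)}$ of $F_n$ only involves coordinates present in $E_{{\bf N}'}$. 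Smoothness of $F^{{\bf N}'}$ is inherited from the $F_n$.

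By the definition of $\mathbf{Q}$-equivariance it then suffices to check that $F^{{\bf N}''} \circ R_a = R_a \circ F^{{\bf N}'}$ for every arrow $a \colon {\bf N}' \to {\bf N}''$. I would fix $x = \bigoplus_{m \in N'} x_m \in E_{{\bf N}'}$ and a node $n \in N''$ and compute both sides at the coordinate $n$. On the right-hand side $R_a$ merely forgets coordinates outside $N''$, so the $n$-component of $R_a(F^{{\bf N}'}(x))$ equals $F^{{\bf N}'}_n(x) = F_n(\bigoplus_{e \,:\, t(e)=n} x_{s(e)})$. On the left-hand side the $n$-component of $F^{{\bf N}''}(R_a(x))$ equals $F^{{\bf N}''}_n(\bigoplus_{m \in N''} x_m) = F_n(\bigoplus_{e \,:\, t(e)=n} x_{s(e)})$.

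These two expressions are literally the same formula $F_n$ evaluated on the same inputs, so they agree, and ranging over all arrows $a$ yields the $\mathbf{SubQ}(\mathbf{N})$-equivariance of the collection $\{F^{{\bf N}'}\}$. The only point that genuinely uses the subnetwork hypothesis is that the two input tuples coincide: since $n \in N''$ and ${\bf N}''$ is a subnetwork, every edge $e$ with $t(e) = n$ has $s(e) \in N''$, so the coordinates $x_{s(e)}$ feeding into $F_n$ are precisely the ones \emph{retained} by $R_a$, never the ones it forgets. I expect this to be the main—though rather mild—obstacle: the entire content of the lemma is the bookkeeping observation that projecting onto a subnetwork never discards a coordinate on which the surviving node maps depend, which is exactly the closure condition built into the definition of $\sqsubseteq$.
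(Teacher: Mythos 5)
Your proposal is correct and follows essentially the same route as the paper's own proof: both first use the subnetwork closure condition ($t(e)\in N'$ implies $s(e)\in N'$) to establish well-definedness, and then verify $R_a \circ F^{{\bf N}'} = F^{{\bf N}''}\circ R_a$ componentwise by observing that for $n\in N''$ the $n$-th components of $F^{{\bf N}'}$ and $F^{{\bf N}''}$ are the same formula $F_n$ evaluated on the same variables, all of which are retained by the forgetful map $R_a$. Your closing remark isolating where the hypothesis ${\bf N}''\sqsubseteq{\bf N}'$ actually enters matches the content of the paper's displayed identity exactly.
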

\begin{proof} 
First of all, note that the maps $F^{{\bf N}'}$ are well-defined because we assumed that ${\bf N}' \sqsubseteq{\bf N}$, so that $s(e)\in N'$ whenever $t(e)\in N'$. 

To prove ${\bf SubQ}({\bf N})$-equivariance, assume that $a\in A$ is the arrow from ${\bf N'}$ to ${\bf N''}$. It then holds  that ${\bf N}'' \sqsubseteq {\bf N}' \sqsubseteq {\bf N}$, so
$$F^{{\bf N}'}_n\! \left(  \bigoplus_{m\in {\bf N}'} x_m \right) \!\! =  F_n\! \left(  \bigoplus_{e \in E \, : \, t(e) = n} x_{s(e)} \right) \! =  F^{{\bf N}''}_n \! \left(  \bigoplus_{m\in {\bf N}''} x_m \right)\, \mbox{for all}\ n\in N''. $$
%because $s(e)\in N''$ if $t(e)\in N''$.
But this implies that
\begin{align}\nonumber 
& R_a \left(F^{{\bf N}'} \left(  \bigoplus_{m\in {\bf N}'} x_m \right) \right) = \bigoplus_{n\in N''} F^{{\bf N}'}_n \left(  \bigoplus_{m\in {\bf N}'} x_m \right)  \\ \nonumber 
& = \bigoplus_{n\in N''}  F^{{\bf N}''}_n \left(  \bigoplus_{m\in {\bf N}''} x_m \right) = F^{{\bf N}''} \left( R_a\left( \bigoplus_{m\in {\bf N}'} x_m \right) \right)\, ,
\end{align}
which proves the lemma.
\end{proof}
\noindent   The next result is the converse of Lemma \ref{obvious} and the natural generalisation of Lemma \ref{fflemma}. The proof is similar to that of Lemma \ref{obvious}.
 \begin{lem}\label{alsoobvious}
 A collection of maps $F^{\bf N'}: E_{\bf N'} \to E_{\bf N'}$ (one for each ${\bf N}' \sqsubseteq {\bf N})$ is ${\bf SubQ}({\bf N})$-equivariant if and only if for all ${\bf N}'' \sqsubseteq {\bf N}' \sqsubseteq {\bf N}$ it holds that 
\begin{align}\label{equivariancesubnetworks}
 F^{{\bf N}'}_n \left( \bigoplus_{m\in N'} x_m  \right) = F^{{\bf N}''}_n \left(\bigoplus_{m\in N''} x_m  \right) \ \mbox{for all}\ n\in N'' \, .
 \end{align}
  (In other words: if the $n$-th components of all the maps are equal and depend only on the variables $x_m$ with $m$ in the smallest subnetwork of {\bf N} containing $n$.)
 \end{lem}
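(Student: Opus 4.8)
The plan is to prove the two directions of the equivalence separately, reusing the structure of the proof of Lemma \ref{obvious} almost verbatim. The key observation throughout is that the arrows of ${\bf SubQ}({\bf N})$ are indexed precisely by pairs ${\bf N}'' \sqsubseteq {\bf N}'$, and that for such a pair the map $R_a$ is the coordinate projection $\bigoplus_{m\in N'} x_m \mapsto \bigoplus_{m\in N''} x_m$ from \eqref{Radef}. So the whole statement is really a componentwise unpacking of the single equivariance relation $R_a \circ F^{{\bf N}'} = F^{{\bf N}''} \circ R_a$.

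First I would prove the ``only if'' direction. Assume the collection $\{F^{{\bf N}'}\}$ is ${\bf SubQ}({\bf N})$-equivariant, and fix ${\bf N}'' \sqsubseteq {\bf N}' \sqsubseteq {\bf N}$ with corresponding arrow $a$ from ${\bf N}'$ to ${\bf N}''$. Applying equivariance and reading off the $n$-th component for $n\in N''$ gives, for every $\bigoplus_{m\in N'} x_m \in E_{{\bf N}'}$,
$$
F^{{\bf N}'}_n\left(\bigoplus_{m\in N'} x_m\right) = \left(R_a \circ F^{{\bf N}'}\right)_n\left(\bigoplus_{m\in N'} x_m\right) = \left(F^{{\bf N}''}\circ R_a\right)_n\left(\bigoplus_{m\in N'} x_m\right) = F^{{\bf N}''}_n\left(\bigoplus_{m\in N''} x_m\right),
$$
where the first equality uses that $R_a$ is a projection that retains the $N''$-coordinates (so its $n$-th component is the identity for $n\in N''$), and the last uses $R_a\left(\bigoplus_{m\in N'} x_m\right) = \bigoplus_{m\in N''} x_m$. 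This is exactly \eqref{equivariancesubnetworks}.

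Conversely, I would assume \eqref{equivariancesubnetworks} and verify equivariance across each arrow $a$ from ${\bf N}'$ to ${\bf N}''$ by the same componentwise computation run in reverse, namely
$$
R_a\left(F^{{\bf N}'}\left(\bigoplus_{m\in N'} x_m\right)\right) = \bigoplus_{n\in N''} F^{{\bf N}'}_n\left(\bigoplus_{m\in N'} x_m\right) = \bigoplus_{n\in N''} F^{{\bf N}''}_n\left(\bigoplus_{m\in N''} x_m\right) = F^{{\bf N}''}\left(R_a\left(\bigoplus_{m\in N'} x_m\right)\right).
$$
Since $a$ ranges over all arrows of ${\bf SubQ}({\bf N})$ exactly as the pairs ${\bf N}'' \sqsubseteq {\bf N}'$ do, this establishes $R_a\circ F^{{\bf N}'} = F^{{\bf N}''}\circ R_a$ for every arrow, i.e.\ ${\bf SubQ}({\bf N})$-equivariance. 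I do not expect a genuine obstacle here: the only point requiring mild care is bookkeeping of index sets --- confirming that $R_a$ acts as the identity on the $N''$-components and that the sum $\bigoplus_{n\in N''}$ correctly reassembles $R_a\circ F^{{\bf N}'}$ --- together with the remark in the parenthetical, which is the informal restatement that \eqref{equivariancesubnetworks} forces $F^{{\bf N}}_n$ to depend only on coordinates in the smallest subnetwork containing $n$.
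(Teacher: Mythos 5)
Your proposal is correct and follows essentially the same argument as the paper: both proofs unpack the relation $R_a \circ F^{{\bf N}'} = F^{{\bf N}''} \circ R_a$ componentwise using the explicit form \eqref{Radef} of $R_a$ as a coordinate projection, and observe that equality of the two sides is precisely condition \eqref{equivariancesubnetworks}. The only cosmetic difference is that you run the two directions of the equivalence separately, whereas the paper phrases it as a single ``if and only if'' comparison of the two displayed expressions.
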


 \begin{proof}
Let ${\bf N}'' \sqsubseteq {\bf N}'$ and let $a \in A$ be the arrow with $s(a)={\bf N}'$ and $t(a)={\bf N}''$. By definition of $R_a$, we have on the one hand that
$$ 
R_a \left(F^{{\bf N}'} \left(  \bigoplus_{m\in {\bf N}'} x_m \right) \right) = \bigoplus_{n\in N''} F^{{\bf N}'}_n \left(  \bigoplus_{m\in {\bf N}'} x_m \right)\, .
$$
On the other hand, 
 $$
  F^{{\bf N}''} \left( R_a\left( \bigoplus_{m\in {\bf N}'} x_m \right) \right) = \bigoplus_{n\in N''}  F^{{\bf N}''}_n \left(  \bigoplus_{m\in {\bf N}''} x_m \right) \, .
$$
So $R_a \circ F^{{\bf N}'} = F^{{\bf N}''} \circ R_a$ if and only if (\ref{equivariancesubnetworks}) holds. 
 \end{proof}
\begin{ex}
Let us investigate what  Lemma \ref{alsoobvious} says for the network {\bf N} in Examples \ref{interestingffex} and \ref{ex2quiver}.  So assume that $F^{\bf N_1}, \ldots, F^{\bf N_5}$ form an equivariant map for the quiver depicted in Figure \ref{pict4}. Observe that ${\bf N}_5={\bf N}$ and  that the smallest subnetwork of {\bf N} that contains node $1$ is ${\bf N}_1$. Substituting $n=1$, ${\bf N}'= {\bf N}$ and ${\bf N}''= {\bf N}_1$ in (\ref{equivariancesubnetworks}) yields 
\begin{align}\nonumber
& F^{{\bf N}}_1 (x_1, x_2, x_3, x_4, x_5) = F^{{\bf N}_1}_1 ( x_1) \, .
\end{align}
This shows that $F^{\bf N}_1(x)$ depends only on $x_1$. Continuing in this way for the other nodes, choosing each time for ${\bf N}''$ the smallest subnetworks containing them, we find
\begin{align}\nonumber
& F^{{\bf N}}_2 (x_1, x_2, x_3, x_4, x_5)= F^{{\bf N}_2}_2 ( x_1, x_2) \, , \\ \nonumber
& F^{{\bf N}}_3 (x_1, x_2, x_3, x_4, x_5)=  F^{{\bf N}_3}_3 ( x_1, x_2, x_3) \, ,\\ \nonumber
& F^{{\bf N}}_4 (x_1, x_2, x_3, x_4, x_5)=  F^{{\bf N}_4}_4 ( x_1, x_2, x_3, x_4) \, , \\ \nonumber
& F^{{\bf N}}_5 (x_1, x_2, x_3, x_4, x_5)=  F^{{\bf N}_5}_5 ( x_1, x_2, x_3, x_4, x_5) \, .
\end{align}
We conclude that ${\bf SubQ}({\bf N})$-equivariance is equivalent to $F^{\bf N}$ being of the form 
\begin{align}\label{networkform2}
F^{\bf N} \left( \begin{array}{l} x_1\\ x_2 \\ x_3 \\ x_4 \\ x_5 \end{array} \right) = 
\left( 
\begin{array}{l} 
  F_1^{{\bf N}_1} (x_1) \\
  F_2^{{\bf N}_2} (x_1, x_2) \\
  F_3^{{\bf N}_3} (x_1, x_2, x_3) \\
 F_4^{{\bf N}_4} (x_1, x_2, x_3, x_4) \\
  F_5^{{\bf N}_5} (x_1, x_2, x_3, x_4, x_5) \\
\end{array}
\right)\, 
\end{align}
for some functions $F_i^{{\bf N}_i}$ depending on an appropriate number of variables.  

Note that (\ref{networkform2}) is different from (\ref{networkform}). In fact, any map of the form (\ref{networkform}) is also of the form (\ref{networkform2}) but not vice versa. Hence   (\ref{networkform2}) defines a more general class of maps than (\ref{networkform}). Nevertheless, by construction (\ref{networkform}) and (\ref{networkform2}) have exactly the same subnetworks, so a lot of the network structure of (\ref{networkform}) is also present in (\ref{networkform2}). More importantly, the network structure of (\ref{networkform2}) remains intact when we compose network maps (because quiver-symmetry remains intact under composition, see Proposition \ref{obviouscomp}). We already saw in Example \ref{interestingffex} that  network maps of the form (\ref{networkform}) do not possess this nice property. %As a result, for example, the Birkhoff normal form of a system of the form (\ref{networkform}) will be of the form (\ref{networkform2}). This will be shown in Section \ref{sec10}.
\end{ex}

\section{The quiver of quotient networks}\label{sec5}
Quotient networks were introduced by Golubitsky and Stewart et al. \cite{curious}, \cite{golstew}, \cite{torok},  \cite{stewartnature}, \cite{pivato} to compute robust  synchrony patterns in network dynamical systems. It was shown for the first time in \cite{torok} that every solution of any quotient network lifts to a solution of the original network, i.e., that there is a linear map between the phase spaces that sends solutions of the quotient system to solutions of the original system.
More recently, DeVille and Lerman \cite{deville} generalised this result, and reformulated it using the language of category theory and graph fibrations. The goal of this section is to translate all these observations into the language of quiver representations.

The first half of this section has been added for completeness. We do not aim to provide a comprehensive exposition on quotient networks. Instead, we shall give the basic definitions that allow us to define the quiver of quotient networks. The informed reader may want to skip the first half of this section and start reading from Theorem \ref{devilletheorem1}.
 
We start this section by generalising the notion of a network that was introduced in the previous section. %$$I(c):= \{ d \ \mbox{cell in}\ {\rm \bf N} \ | \ \mbox{there is an arrow from}\ d \ \mbox{to} \ c\}\cup \{ a  \}$$
\begin{defi}\label{defnetwork}
A {\it coloured network} is a network ${\rm \bf N} = \{E \rightrightarrows^s_t N\}$ in which all nodes and edges are assigned a colour, in such a way that
\begin{itemize}
\item[{\bf 1.}] if two edges $e_1, e_2\in E$ have the same colour, then so do their sources $s(e_1)$ and $s(e_2)$, and so do their targets $t(e_1)$ and $t(e_2)$;
\item[{\bf 2.}] if two nodes $n_1, n_2\in N$ have the same colour, then there is at least one colour preserving bijection  $$\beta_{n_2, n_1}:t^{-1}(n_1)\to t^{-1}(n_2)$$ between the edges that target $n_1$ and $n_2$.
\end{itemize}%\vspace{-.5cm}
\end{defi}
\noindent 
One should think of the networks of Section \ref{sec4} as coloured networks in which all nodes and edges have a different colour, so that conditions {\bf 1} and {\bf 2} are automatically satisfied. %Condition {\bf 2} can be clarified by defining for every vertex $v\in V$, the subsets
%\begin{align}\nonumber
%A(v):= \{a\in A\, | \, t(a)=v\, \} \subset A \ \mbox{and} \ V(v):= \{s(a), t(a) \in V\, |\, a \in A(v)\, \} \subset V \, .
%\end{align}
%We now call the subgraph
%$$I(v) := \{ A(v) \rightrightarrows^s_t V(v) \}$$
%of ${\rm \bf N}$ the {\it ``direct input network''} of vertex $v$. It inherits the colouring of ${\rm \bf N}$ and is hence a subnetwork of ${\rm \bf N}$. Condition 2 can then be reformulated as follows: whenever two vertices $v_1$ and $v_2$ have the same colour, then there is a bijective morphism $\phi_{v_1, v_2}$ between the networks $I(v_1)$ and $I(v_2)$. The collection of these morphisms 
We remark that the node- and arrow-colours in Definition \ref{defnetwork} are the same as the cell- and arrow-types defined in \cite{torok}. The collection of colour preserving bijections
$$\mathbb{G}_{\bf N}:=\{\, \beta_{n_2, n_1}: t^{-1}(n_1)\to t^{-1}(n_2)\ \mbox{colour preserving bijection} \, | \, n_1, n_2\in N \, \}$$ 
is  the so-called  {\it symmetry groupoid} of Golubitsky, Stewart and Pivato \cite{pivato}. These authors also make the following definition, generalising the network maps that we defined in Section \ref{sec4}.

\begin{defi}\label{defadmissible}
Let ${\rm \bf N} = \{E \rightrightarrows^s_t N\}$ be a coloured network and assume that $F^{\bf N}: \bigoplus_{{  m} \in {  N}} E_{  m}  \to  \bigoplus_{{  m} \in {  N}} E_{  m}$ is a map of the form 
$$F^{\bf N}_n\left( \, \bigoplus_{{  m} \in N} x_m\, \right) = F_{  n}\left(  \bigoplus_{{ e} \in { E}\, : \, t(e) = n } x_{ s(e)}   \right) \, .$$ 
Assume moreover that  
$$E_{n_1}=E_{n_2} \ \mbox{whenever}\ n_1, n_2\in  N \ \mbox{have the same colour},$$ 
and that for every $n_1, n_2\in N$ of the same colour and every colour preserving bijection 
$\beta_{n_2, n_1} \in \mathbb{G}_{\bf N}$ it holds that 
$$F_{n_1}\left(  \bigoplus_{{ e} \in { E}\, : \, t(e) = n_1 } x_{ s(\beta_{n_2, n_1}(e))}   \right) = F_{n_2} \left(  \bigoplus_{{ e} \in { E}\, : \, t(e) = n_2 } x_{ s(e)}   \right)\, .$$
Then we say that $F^{\bf N}$ is an  {\it admissible map} for ${\rm\bf N}$.
\end{defi}
% Network maps are also referred to as {\it admissible maps} in the literature. 
\begin{ex}
Figure  \ref{pict1e} shows an example of a coloured network with two node colours and three edge colours. The edges from a node to itself representing internal dynamics are not depicted. Note that each yellow node is targeted by two blue edges. Hence there are two colour-preserving bijections between the edges targeting any two yellows nodes. Similarly, each green node is targeted by one red and one orange edge, so there is exactly one colour-preserving bijection between the edges targeting any two green nodes.
 
 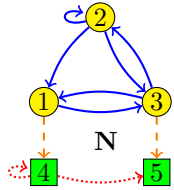
\begin{figure}[h]\renewcommand{\figurename}{\rm \bf \footnotesize Figure} 
\begin{center}
\hspace{0cm}
  \hspace{-.4cm} 
 \begin{tikzpicture}[->, scale=.75]
	 	 \tikzstyle{vertextype2} = [circle, draw, fill=yellow, minimum size=10pt,inner sep=1pt]
		 \tikzstyle{vertextype3} = [rectangle, draw, fill=green, minimum size=10pt,inner sep=1pt]

	 \node[vertextype2] (v1) at (-2,1.5) {$1$};
	 \node[vertextype2] (v2) at (-1,3) {$2$};
	 \node[vertextype2] (v3) at (0,1.5){$3$};
	 \node[vertextype3] (v4) at (-2,0.25) {$4$};
	 \node[vertextype3] (v5) at (0,0.25) {$5$};

	\node at (-0.9, 0.8) {${\bf N}$};
	
	\path[]
	(v1) edge [thick, blue, bend right = 15] node {} (v3)
	(v3) edge [thick, blue, bend right = 15] node {} (v1)
	(v2) edge [thick, blue, bend right = 15] node {} (v3)
	(v3) edge [thick, blue, bend right = 15] node {} (v2)
	(v2) edge [loop left, blue, thick] node {} (v2)
	(v2) edge [thick, blue, bend right = 15] node {} (v1)
	
	(v1) edge [thick, orange, dashed] node {} (v4)
	(v3) edge [thick, orange, dashed] node {} (v5)
	
	(v4) edge [thick, red, bend right = 15, densely dotted] node {} (v5)
	(v4) edge [loop left, red, thick, densely dotted] node {} (v4)
;

 \end{tikzpicture} 
 \vspace{-.1cm}
 \caption{\footnotesize {\rm An example of a network with two node colours and three edge colours. Self-loops describing internal dynamics are not shown.}}
 \vspace{-.5cm}
\label{pict1e}
\end{center}
 \end{figure} 
\noindent An admissible map for this network is of the form
\begin{equation}   \nonumber %\tag{A} % \left\{
F^{{\bf N}}\left( \begin{array}{c} 
x_1 \\ x_2 \\ x_3 \\ y_4 \\ y_5
\end{array} 
\right) = \left(
 \begin{array}{l}   
    F(x_{1}, \overline{\bl{ x_{2}}, \bl{x_{3}}}) \\%, x_2, x_3) \\
 F(x_{2}, \overline{\bl{x_{2}}, \bl{x_{3}}}) \\%, x_2, x_3) \\
  F(x_{3}, \overline{\bl{x_{1}}, \bl{x_{2}}})\\ %, x_2, x_3)
  G(y_{4}, \ro{y_{4}}, \ora{x_1})\\
 G(y_{5}, \ro{y_{4}}, \ora{x_3})\\
 \end{array}\right) \, ,
 \hspace{-.8cm}
 \end{equation} 
for some functions $F$ and $G$. The bar indicates that variables may be interchanged, i.e., it expresses that $F(x,\overline{\bl{y}, \bl{z}}) = F(x,\overline{\bl{z}, \bl{y}}) $ for all $x,y,z$. 
\end{ex}
\noindent The next  definition is due to DeVille and Lerman  \cite{deville}.
 \begin{defi}\label{deffibration}
Let ${\bf N}= \{E \rightrightarrows^s_t N\}$ and ${\bf N}' = \{E' \rightrightarrows^s_t N'\} $ be coloured networks and let $\phi :{\rm \bf N}\to {\rm \bf N}'$. Assume that 
\begin{itemize}
\item[{\it i)}] this $\phi$ sends edges to edges and nodes to nodes, it preserves the colours of nodes and edges, and sends the head and tail of every edge $e\in E$ to the head and tail of $\phi(e)\in E'$;
 \item[{\it ii)}]  for every node $n \in N$, the restriction $\phi|_{t^{-1}(n)} :t^{-1}(n) \to t^{-1}(\phi(n))$  is a colour preserving bijection.
 \end{itemize}
 Then $\phi$ is called a {\it graph fibration}.
  \end{defi}
  \noindent 
 The key result in  \cite{deville} is the following theorem.
 
 \begin{thr}[DeVille \& Lerman]\label{devilletheorem1}
Let ${\rm \bf N} = \{E \rightrightarrows^s_t N\}$ and ${\rm \bf N}' = \{E' \rightrightarrows^s_t N'\}$ be coloured networks, let $\phi: {\rm \bf N} \to {\rm \bf N}'$ be a graph fibration, and let  $F^{\bf N}$ and $F^{{\bf N}'}$ be admissible maps for ${\bf N}$ and ${\bf N}'$ respectively. In particular, they have the form
\begin{align} \nonumber 
F^{\bf N}_n\left( \, \bigoplus_{{  m} \in N} x_m\, \right) & = F_{ n}\left(  \bigoplus_{{ e} \in { E}\, : \, t(e) = n } x_{ s(e)}   \right) \ \mbox{and} \\ \nonumber   
F^{{\bf N}'}_{n'}\left( \, \bigoplus_{{  m} \in N'} x_m\, \right) & = F'_{ n'}\left(  \bigoplus_{{ e'} \in { E'}\, : \, t(e') = n' } x_{ s(e')}   \right)\, .
\end{align}
Finally, assume that for every $n\in N, n'\in N'$ of the same colour and every colour preserving bijection $\beta_{n', n}: t^{-1}(n) \to t^{-1}(n')$, it holds that
$$F_{n} \left(  \bigoplus_{{ e} \in { E}\, : \, t(e) = n } x_{ s(\beta_{n', n}(e))}   \right)  = F'_{n'}\left(  \bigoplus_{{ e'} \in { E'}\, : \, t(e') = n' } x_{ s(e')}   \right)\, .$$
%and suppose that the dynamics of networks ${\rm \bf N}_1$ and ${\rm \bf N}_2$ are governed by the same response function $f$. 
Then the linear map 
$$R_{\phi}: \bigoplus_{m'\in {\bf N}'} E_{m'} \to \bigoplus_{m\in {\bf N}} E_{m} \ \mbox{defined by}\ R_{\phi} \left( \bigoplus_{m'\in N'} x_{m'} \right) := \bigoplus_{m\in N} x_{\phi(m)}\, .$$
satisfies
$$R_{\phi} \circ F^{{\bf N}'} = F^{\bf N}  \circ R_{\phi} \, .$$
 \end{thr}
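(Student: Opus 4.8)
The plan is to verify the intertwining relation $R_\phi \circ F^{{\bf N}'} = F^{\bf N} \circ R_\phi$ componentwise, i.e. to check that both sides agree in each coordinate $n \in N$. The map $R_\phi$ acts by pullback along $\phi$: the $n$-th component of $R_\phi(\bigoplus_{m'} x_{m'})$ is $x_{\phi(n)}$. So the whole proof reduces to chasing indices through the definitions of the admissible maps and the graph fibration, and the real content is bookkeeping with the bijections $\phi|_{t^{-1}(n)}$ on incoming edges.

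**First** I would fix $n \in N$ and write out the $n$-th component of the right-hand side. Applying $F^{\bf N}$ first and then reading off coordinate $n$ gives
$$\left( F^{\bf N} \circ R_\phi \right)_n \left( \bigoplus_{m'\in N'} x_{m'} \right) = F_n \left( \bigoplus_{e\,:\,t(e)=n} \big(R_\phi(x)\big)_{s(e)} \right) = F_n \left( \bigoplus_{e\,:\,t(e)=n} x_{\phi(s(e))} \right)\, ,$$
where I used that the $s(e)$-component of $R_\phi(x)$ is $x_{\phi(s(e))}$. Next I would compute the $n$-th component of the left-hand side. Since $\big(R_\phi(w)\big)_n = w_{\phi(n)}$ for any vector $w$, we get
$$\left( R_\phi \circ F^{{\bf N}'} \right)_n \left( \bigoplus_{m'\in N'} x_{m'} \right) = \left( F^{{\bf N}'}(x) \right)_{\phi(n)} = F'_{\phi(n)} \left( \bigoplus_{e'\,:\,t(e')=\phi(n)} x_{s(e')} \right)\, .$$

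**The key step** is to match these two expressions using the fibration property. By condition \textit{ii)} of Definition \ref{deffibration}, the restriction $\phi|_{t^{-1}(n)} : t^{-1}(n) \to t^{-1}(\phi(n))$ is a colour-preserving bijection; call it $\beta_{\phi(n), n}$. This bijection reindexes the sum over $e'$ targeting $\phi(n)$ as a sum over $e$ targeting $n$, with $e' = \phi(e)$, so $s(e') = s(\phi(e)) = \phi(s(e))$ by condition \textit{i)} (fibrations send tails to tails). Therefore
$$F'_{\phi(n)} \left( \bigoplus_{e'\,:\,t(e')=\phi(n)} x_{s(e')} \right) = F'_{\phi(n)} \left( \bigoplus_{e\,:\,t(e)=n} x_{\phi(s(e))} \right)\, .$$
It now remains to show this equals $F_n\big(\bigoplus_{e\,:\,t(e)=n} x_{\phi(s(e))}\big)$, which is precisely the hypothesis of the theorem applied to the pair $n \in N$, $n' = \phi(n) \in N'$ (which share a colour since $\phi$ is colour-preserving) and the colour-preserving bijection $\beta_{\phi(n), n} = \phi|_{t^{-1}(n)}$: the displayed admissibility/compatibility identity says exactly $F_n(\bigoplus_e x_{s(\beta(e))}) = F'_{n'}(\bigoplus_{e'} x_{s(e')})$, and relabelling the dummy variable $x \mapsto x_{\phi(\cdot)}$ turns this into the desired equality.

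**The main obstacle** is purely notational rather than conceptual: one must be careful that the hypothesis identity is stated with the bijection appearing \emph{inside} $F_n$ (reindexing the edges of $n$ by those of $n'$), whereas the natural computation produces $\phi$ acting on sources. The crux is recognizing that $\phi|_{t^{-1}(n)}$ is itself a valid choice of colour-preserving bijection $\beta_{n',n}$, so that the abstract compatibility assumption specializes to exactly the relation needed. Once the correct bijection is identified, everything matches and the equality holds for every $n \in N$, which completes the proof.
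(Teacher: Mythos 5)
Your proof is correct and is essentially the argument the paper has in mind: the paper itself does not write out a proof (it remarks that the proof ``consists of combining all the definitions'' and defers to DeVille--Lerman \cite{deville}), and your componentwise computation supplies exactly that omitted index-chase. The one step carrying content is the one you identify as the crux --- that $n$ and $\phi(n)$ share a colour and that $\beta_{\phi(n),n}:=\phi|_{t^{-1}(n)}$ is an admissible choice of colour-preserving bijection (conditions \textit{i)} and \textit{ii)} of Definition \ref{deffibration}), whence $s(\phi(e))=\phi(s(e))$ turns the compatibility hypothesis into precisely the identity $(R_{\phi}\circ F^{{\bf N}'})_n=(F^{\bf N}\circ R_{\phi})_n$ for every $n\in N$.
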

\noindent  The proof of the theorem is simple and consists of combining all the definitions that were made. It can be found in \cite{deville}. %Instead, let us make the following observations.

It is not hard to see that ${\bf N'}\sqsubseteq {\bf N}$ is a subnetwork if and only if the inclusion $i: {\bf N'}\to {\bf N}$ is an injective graph fibration. The map 
$R_{i}: \bigoplus_{m\in N} E_m \to \bigoplus_{m\in N'} E_m$ is then given by 
$$R_{i}\left(\bigoplus_{m\in N} x_m\right) = \bigoplus_{m\in N'} x_{i(m)} =\bigoplus_{m\in N'} x_{m}\, .$$ 
So we recover the linear maps of Section \ref{sec4}. In this section we shall be interested in surjective graph fibrations instead.
\begin{defi}
When $\phi:{\rm \bf N}\to {\rm \bf N}'$ is a surjective graph fibration, then we call ${\rm \bf N}'$ a {\it quotient} of ${\rm \bf N}$.
\end{defi}
\noindent We are now ready to define the quiver of quotient networks.
 \begin{defi}
Let ${\bf N}$ be a coloured network. The quiver ${\bf QuoQ}({\bf N}) = \{A \rightrightarrows^s_t V\}$ of quotient networks of ${\bf N}$ has as its  vertices the nonempty quotients of ${\bf N}$, i.e.,
$$V=\{ {\bf N}' \neq \emptyset \, | {\bf N}'  \ \mbox{is a quotient of} \ {\bf N}  \} \,  .$$ 
There is exactly one arrow $a \in A$  with $s(a)={\bf N}'$ and $t(a)={\bf N}''$ for each distinct surjective graph fibration $\phi$ from ${\bf N}''$ to ${\bf N}'$. 
\end{defi}
\noindent A representation of ${\bf QuoQ}({\bf N})$ is defined in a straightforward manner. To each quotient ${\bf N}'$ of ${\bf N}$ (i.e., each vertex ${\bf N}'$ of the quiver ${\bf QuoQ}({\bf N})$), we assign the vector space
 $$E_{{\bf N}'} := \bigoplus_{m \in N'} E_{m}\, , $$
and for each arrow $a \in A$ from ${\bf N}'$ to ${ \bf N}''$ (corresponding to the graph fibration $\phi: {\bf N}'' \to {\bf N}'$), we define $R_a = R_{\phi}$, where $R_{\phi}$ is the linear map defined in Theorem \ref{devilletheorem1}. In other words, $R_a: E_{{\bf N}'}   \to E_{{\bf  N}''}$ is defined by the formula 
 \begin{align}
 R_{a} \left( \bigoplus_{m\in N'} x_m \right) := \bigoplus_{m\in N''} x_{\phi(m)}\, . \label{repmap}
 \end{align}
Theorem \ref{devilletheorem1} then trivially translates into the following result.
  \begin{cor} 
  Let ${\rm \bf N} = \{E \rightrightarrows^s_t N\}$ be a coloured network and let $F^{\bf N} : E_{{\bf N}}  \to E_{{\bf N}} $ be an admissible map, so that in particular it is of the form
  $$F^{\bf N}_n\left( \, \bigoplus_{{  m} \in N} x_m\, \right) = F_{ n}\left(  \bigoplus_{{ e} \in { E}\, : \, t(e) = n } x_{ s(e)}   \right)\, .$$
  For each surjective graph fibration $\phi: {\bf N} \to {\bf N}'$, define $F^{\bf N'}: E_{{\bf N}'}  \to E_{{\bf N}'} $ by 
  $$F^{{\bf N}'}_{\phi(n)}\left( \, \bigoplus_{{  m} \in N'} x_m\, \right) := F_{ n}\left(  \bigoplus_{{ e} \in { E}\, : \, t(e) = n } x_{ s(\phi(e))}   \right)\, .$$
  Then each $F^{{\bf N}'}$ is well-defined and admissible for ${\bf N}'$. Together the $F^{{\bf N}'}$ $({\bf N}'$ quotient of ${\bf N})$ form a ${\bf QuoQ}({\bf N})$-equivariant map.
  \end{cor}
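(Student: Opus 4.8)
The plan is to deduce the corollary from Theorem \ref{devilletheorem1} together with the admissibility condition of Definition \ref{defadmissible}, treating in turn the three assertions hidden in the statement: that each $F^{{\bf N}'}$ is well-defined, that it is admissible for ${\bf N}'$, and that the collection is ${\bf QuoQ}({\bf N})$-equivariant. The single computational observation driving everything is that the defining formula rewrites as
$$F^{{\bf N}'}_{\phi(n)}(x) = F^{\bf N}_n(R_{\phi} x)\, ,$$
since $x_{s(\phi(e))} = x_{\phi(s(e))} = (R_{\phi}x)_{s(e)}$. Here $R_{\phi}x$ is the ``lift'' of $x$, which is constant on the fibres of $\phi$ by construction.

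First I would settle well-definedness. Fix $n'\in N'$ and suppose $\phi(n_1)=\phi(n_2)=n'$ for two nodes $n_1,n_2\in N$. As $\phi$ preserves colours, $n_1$ and $n_2$ have the same colour, and composing the two fibration bijections $\phi|_{t^{-1}(n_i)}:t^{-1}(n_i)\to t^{-1}(n')$ from Definition \ref{deffibration} yields a colour-preserving bijection $\beta:t^{-1}(n_1)\to t^{-1}(n_2)$ with $\phi\circ\beta=\phi$ on $t^{-1}(n_1)$. Consequently $(R_{\phi}x)_{s(\beta(e))} = x_{\phi(s(\beta(e)))} = x_{\phi(s(e))} = (R_{\phi}x)_{s(e)}$ for every in-edge $e$ of $n_1$. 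Feeding $R_{\phi}x$ into the admissibility identity of Definition \ref{defadmissible} for the pair $(n_1,n_2)$ and using this invariance, the $\beta$-substitution acts trivially, so that $F^{\bf N}_{n_1}(R_{\phi}x) = F^{\bf N}_{n_2}(R_{\phi}x)$, which is exactly independence of the chosen preimage. Running this over all fibrations reaching ${\bf N}'$ moreover shows that $F^{{\bf N}'}$ depends only on ${\bf N}'$ and on the colour-indexed response functions of $F^{\bf N}$, not on the particular $\phi$. Admissibility of $F^{{\bf N}'}$ is then pure transport: two same-coloured nodes of ${\bf N}'$ lift to same-coloured nodes of ${\bf N}$, the required colour-preserving bijection of their in-edges pulls back along the fibration bijections, and the admissibility identity for $F^{\bf N}$ descends, while equality of internal phase spaces is inherited from ${\bf N}$.

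For equivariance, fix an arrow $a$ of ${\bf QuoQ}({\bf N})$ from ${\bf N}'$ to ${\bf N}''$, corresponding to a surjective fibration $\phi:{\bf N}''\to{\bf N}'$ with representation map $R_a=R_{\phi}$, and verify $F^{{\bf N}''}\circ R_{\phi} = R_{\phi}\circ F^{{\bf N}'}$. Since both $F^{{\bf N}''}$ and $F^{{\bf N}'}$ are now known to be admissible, this is precisely the conclusion of Theorem \ref{devilletheorem1} applied to $\phi$, whose compatibility hypothesis reduces to the statement that the two maps share response functions, true because both equal those of $F^{\bf N}$. Equivalently, and self-containedly, I would choose a fibration $\psi:{\bf N}\to{\bf N}''$ defining $F^{{\bf N}''}$ and use the contravariant functoriality $R_{\phi\circ\psi} = R_{\psi}\circ R_{\phi}$ of the lift together with the representative-independence of $F^{{\bf N}'}$ applied to the composite $\phi\circ\psi:{\bf N}\to{\bf N}'$; this gives $F^{{\bf N}'}_{\phi(n'')}(x) = F^{\bf N}_n(R_{\psi}R_{\phi}x) = F^{{\bf N}''}_{n''}(R_{\phi}x)$ for every $n''=\psi(n)$, which is exactly the component form of the claimed intertwining.

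The hard part will be the well-definedness/independence step. Everything else is careful bookkeeping over Definitions \ref{defadmissible} and \ref{deffibration}, but to even make sense of the symbol ``$F^{{\bf N}'}$'' and to compose fibrations coherently one must first show the construction is insensitive to the chosen preimage node, and hence to the fibration itself; this is the one place where the admissibility of $F^{\bf N}$ is genuinely used, and once it is in hand the equivariance is indeed a near-immediate translation of Theorem \ref{devilletheorem1}.
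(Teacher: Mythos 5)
Your proposal is correct and follows the route the paper itself intends: the paper offers no written proof, stating only that Theorem \ref{devilletheorem1} ``trivially translates'' into the corollary, and your equivariance step is exactly that reduction (your second, self-contained route via $R_{\phi\circ\psi}=R_\psi\circ R_\phi$ and fibration-independence is an equivalent reformulation of the same mechanism). The added value of your write-up is that it actually verifies the two claims the paper suppresses --- well-definedness (independence of the chosen preimage node, and of the fibration) and admissibility of each $F^{{\bf N}'}$ --- via the correct device: pulling colour-preserving bijections back and forth along the bijections $\phi|_{t^{-1}(n)}$ and feeding the lifted point $R_\phi x$, which is constant on fibres of $\phi$, into the admissibility identity of Definition \ref{defadmissible}.
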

\begin{ex}
The network {\bf N} in Figure \ref{pict1e} has six nonempty quotients (including ${\bf N}={\bf N}_1$ itself).  Figure \ref{pict2e} shows the  quiver of quotient networks. 
 
 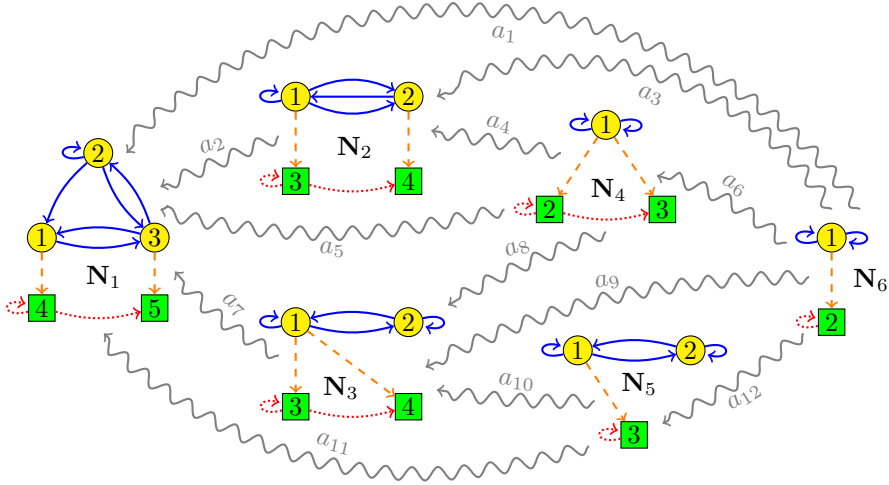
\begin{figure}[h]\renewcommand{\figurename}{\rm \bf \footnotesize Figure} 
\begin{center}
\hspace{0cm}
  \hspace{-.4cm} 
 \begin{tikzpicture}[->, scale=.75]
	 	 \tikzstyle{vertextype2} = [circle, draw, fill=yellow, minimum size=10pt,inner sep=1pt]
		 \tikzstyle{vertextype3} = [rectangle, draw, fill=green, minimum size=10pt,inner sep=1pt]

	 \node[vertextype2] (v1) at (-2,1.5) {$1$};
	 \node[vertextype2] (v2) at (-1,3) {$2$};
	 \node[vertextype2] (v3) at (0,1.5){$3$};
	 \node[vertextype3] (v4) at (-2,0.25) {$4$};
	 \node[vertextype3] (v5) at (0,0.25) {$5$};

	 \node[vertextype2] (v11) at (2.5,0) {$1$};
	 \node[vertextype2] (v12) at (4.5,0) {$2$};
	 \node[vertextype3] (v13) at (2.5,-1.5) {$3$};
	 \node[vertextype3] (v14) at (4.5,-1.5) {$4$};
	 
	 \node[vertextype2] (v21) at (2.5,4) {$1$};
	 \node[vertextype2] (v22) at (4.5,4) {$2$};
	 \node[vertextype3] (v23) at (2.5,2.5) {$3$};
	 \node[vertextype3] (v24) at (4.5,2.5) {$4$};  

	 \node[vertextype2] (v31) at (8,3.5) {$1$};
	 \node[vertextype3] (v32) at (7, 2) {$2$};
	 \node[vertextype3] (v33) at (9, 2) {$3$};

	 \node[vertextype2] (v41) at (7.5,-0.5) {$1$};
	 \node[vertextype2] (v42) at (9.5,-0.5) {$2$};
	 \node[vertextype3] (v43) at (8.5,-2) {$3$};

	 \node[vertextype2] (v51) at (12, 1.5) {$1$};
	 \node[vertextype3] (v52) at (12, 0) {$2$};

	\node at (-0.9, 0.8) {${\bf N}_1$};
	\node at (3.55, 3.1) {${\bf N}_2$};
	\node at (3.3, -1.16) {${\bf N}_3$};
	\node at (8.59,-1.1) {${\bf N}_5$};
	\node at (8.05, 2.35) {${\bf N}_4$};
	\node at (12.7, 0.75) {${\bf N}_6$};
	
	\path[]
	(v1) edge [thick, blue, bend right = 15] node {} (v3)
	(v3) edge [thick, blue, bend right = 15] node {} (v1)
	(v2) edge [thick, blue, bend right = 15] node {} (v3)
	(v3) edge [thick, blue, bend right = 15] node {} (v2)
	(v2) edge [loop left, blue, thick] node {} (v2)
	(v2) edge [thick, blue, bend right = 15] node {} (v1)
	
	(v11) edge [loop left, blue, thick] node {} (v11)
	(v12) edge [loop right, blue, thick] node {} (v12)
	(v11) edge [thick, blue, bend right = 15] node {} (v12)
	(v12) edge [thick, blue, bend right = 15] node {} (v11)
	
	(v21) edge [loop left, blue, thick] node {} (v21)
	(v21) edge [thick, blue, bend left = 25] node {} (v22)
	(v22) edge [thick, blue, bend right = 0] node {} (v21)
	(v21) edge [thick, blue, bend right = 25] node {} (v22)
	
	(v31) edge [loop right, blue, thick] node {} (v31)
	(v31) edge [loop left, blue, thick] node {} (v31)
	
	(v41) edge [loop left, blue, thick] node {} (v41)
	(v42) edge [loop right, blue, thick] node {} (v42)
	(v41) edge [thick, blue, bend right = 15] node {} (v42)
	(v42) edge [thick, blue, bend right = 15] node {} (v41)
	
	(v51) edge [loop right, blue, thick] node {} (v51)
	(v51) edge [loop left, blue, thick] node {} (v51)	
	
	(v1) edge [thick, orange, dashed] node {} (v4)
	(v3) edge [thick, orange, dashed] node {} (v5)

	(v11) edge [thick, orange, dashed] node {} (v13)
	(v11) edge [thick, orange, dashed] node {} (v14)
	
	(v21) edge [thick, orange, dashed] node {} (v23)
	(v22) edge [thick, orange, dashed] node {} (v24)
	
	(v31) edge [thick, orange, dashed] node {} (v32)
	(v31) edge [thick, orange, dashed] node {} (v33)
	
	(v41) edge [thick, orange, dashed] node {} (v43)
	
	(v51) edge [thick, orange, dashed] node {} (v52)
	
	(v4) edge [thick, red, bend right = 15, densely dotted] node {} (v5)
	(v4) edge [loop left, red, thick, densely dotted] node {} (v4)
	
	(v13) edge [thick, red, bend right = 15, densely dotted] node {} (v14)
	(v13) edge [loop left, red, thick, densely dotted] node {} (v13)
	
	(v23) edge [thick, red, bend right = 15, densely dotted] node {} (v24)
	(v23) edge [loop left, red, thick, densely dotted] node {} (v23)
	
	(v32) edge [thick, red, bend right = 15, densely dotted] node {} (v33)
	(v32) edge [loop left, red, thick, densely dotted] node {} (v32)
	
	(v43) edge [loop left, red, thick, densely dotted] node {} (v43)
	
	(v52) edge [loop left, red, thick, densely dotted] node {} (v52)

	(12.5, 2) edge [thick, decorate, decoration=snake, bend left = -50, gray]  node [below, midway, sloped] {$a_{1}$} (-0.5, 3) %N6 to N5
	(2.2, 3.3) edge [thick, decorate, decoration=snake, gray]  node [above, midway, sloped] {$a_2$} (0.1, 2.4) %N2 to N1
	(12, 2) edge [thick, decorate, decoration=snake, bend left = -40, gray]  node [below, midway, sloped] {$a_{3}$} (5, 4) %N6 to N1	
	(7.2, 3) edge [thick, decorate, decoration=snake, gray]  node [above, midway, sloped] {$a_4$} (4.9, 3.4) %N3 to N1
	(6.2, 2) edge [thick, decorate, decoration=snake, bend left = 13, gray]  node [below, midway, sloped] {$a_5$} (0.1, 2) %N4 to N1
	(11.2, 1.4) edge [thick, decorate, decoration=snake, bend left = -12, gray]  node [above, midway, sloped] {$a_6$} (8.9, 2.6) %N6 to N4
	(2.2, -0.6) edge [thick, decorate, decoration=snake, bend left = 10, gray]  node [above, midway, sloped] {$a_7$} (0.36, 0.9) %N3 to N1
	(8, 1.6) edge [thick, decorate, decoration=snake, bend left = -5, gray]  node [above, midway, sloped] {$a_8$} (5.2, 0.3) %N4 to N3
	(11.6, 0.8) edge [thick, decorate, decoration=snake, bend left = -12, gray]  node [above, midway, sloped] {$a_9$} (4.8, -0.8) %N6 to N3
	(7.8, -1.4) edge [thick, decorate, decoration=snake, bend left = 5, gray]  node [above, midway, sloped] {$a_{10}$} (5, -1.2) %N5 to N3
	(7.7, -2.2) edge [thick, decorate, decoration=snake, bend left = 30, gray]  node [above, midway, sloped] {$a_{11}$} (-0.9, -0.2) %N5 to N1
	(11.5, -0.2) edge [thick, decorate, decoration=snake, bend left = 5, gray]  node [below, midway, sloped] {$a_{12}$} (9, -1.8) %N6 to N5

	;
 \end{tikzpicture} 
 \vspace{-.3cm}
 \caption{\footnotesize {\rm The quiver of quotient networks for the network in Figure \ref{pict1e}. Loops from ${\bf N}_i$ to ${\bf N}_i$ are not drawn.}}
 \vspace{-.6cm}
\label{pict2e}
\end{center}
 \end{figure} 
 To illustrate, note that there is a graph fibration $\phi_2: {\bf N}_1 \to {\bf N}_2$ which sends node $1$ and $2$ to node $1$, node $3$ to node $2$, node $4$ to node $3$ and node $5$ to node $4$. The corresponding linear map in the representation given by formula (\ref{repmap}) is
$$R_{a_2}(x_1, x_2, y_3, y_4) = (x_1,x_1,x_2, y_3, y_4)\, .$$
The admissible maps for ${\bf N}_1$ and ${\bf N}_2$ are given by
\begin{equation}   \nonumber %\tag{A} % \left\{
 F^{{\bf N}_1}\left( \begin{array}{c} 
x_1 \\ x_2 \\ x_3 \\ y_4 \\ y_5
\end{array} 
\right) = \left(
 \begin{array}{l}   
    F(x_{1}, \overline{\bl{ x_{2}}, \bl{x_{3}}}) \\%, x_2, x_3) \\
 F(x_{2}, \overline{\bl{x_{2}}, \bl{x_{3}}}) \\%, x_2, x_3) \\
  F(x_{3}, \overline{\bl{x_{1}}, \bl{x_{2}}})\\ %, x_2, x_3)
  G(y_{4}, \ro{y_{4}}, \ora{x_1})\\
 G(y_{5}, \ro{y_{4}}, \ora{x_3})\\
 \end{array}\right) \, \mbox{and}\ 
 %\hspace{-.8cm}
F^{{\bf N}_2}\left( \begin{array}{c} 
x_1 \\ x_2  \\ y_3 \\ y_4
\end{array} 
\right) = \left(
 \begin{array}{l}   
    F(x_{1}, \overline{\bl{ x_{1}}, \bl{x_{2}}}) \\%, x_2, x_3) \\
 F(x_{2}, \overline{\bl{x_{1}}, \bl{x_{1}}}) \\%, x_2, x_3) \\
  G(y_{3}, \ro{y_{3}}, \ora{x_1})\\
 G(y_{4}, \ro{y_{3}}, \ora{x_2})\\
 \end{array}\right) \, .
 \hspace{-.8cm}
 \end{equation} 
 One verifies that indeed $R_{a_2} \circ F^{{\bf N}_2} = F^{{\bf N}_1} \circ R_{a_2}$. 
 
 The full list of representation maps for the arrows in Figure \ref{pict2e} is given by
\begin{align}\nonumber
\begin{array}{ll}
R_{a_1}(x_1, y_2) &= (x_1,x_1,x_1, y_2, y_2)\, , \\  
R_{a_2}(x_1, x_2, y_3, y_4) &= (x_1,x_1,x_2, y_3, y_4)\, , \\ 
R_{a_3}(x_1, y_2) &= (x_1,x_1, y_2, y_2)\, , \\ 
R_{a_4}(x_1, y_2, y_3) &= (x_1,x_1, y_2, y_3)\, , \\  
R_{a_5}(x_1, y_2, y_3) &= (x_1,x_1, x_1, y_2, y_3)\, , \\ 
R_{a_6}(x_1, y_2) &= (x_1, y_2, y_2)\, , \\  
R_{a_7}(x_1, x_2, y_3, y_4) &= (x_1,x_2, x_1, y_3, y_4)\, ,  \\ 
R_{a_8}(x_1, y_2, y_3) &= (x_1, x_1, y_2, y_3)\, , \\  
R_{a_9}(x_1, y_2) &= (x_1, x_1, y_2, y_2) \, ,  \\ 
R_{a_{10}}(x_1, x_2, y_3) &= (x_1, x_2, y_3, y_3)\, , \\  
R_{a_{11}}(x_1, x_2, y_3) &= (x_1, x_2, x_1, y_3, y_3)\, , \\ 
R_{a_{12}}(x_1, y_2) &= (x_1, x_1, y_2)\, . 
\end{array}
\end{align}
%The admissible maps for {\bf some} of these networks are given by 
\end{ex}

\section{Endomorphisms of quiver representations}\label{sec6}
In this section, we gather some basic properties of endomorphisms of quiver representations that will be important in the remainder of this paper.
An endomorphism is simply a linear equivariant map:
\begin{defi}
An {\it endomorphism} of a quiver representation ({\bf E}, {\bf R}) of a quiver ${\bf Q}= \{A\rightrightarrows^s_t V\}$ is a set {\bf L} of {\it linear} maps $L_v: E_v\to E_v$ (one for each  $v\in V$) such that 
$$ L_{t(a)} \circ R_a = R_a \circ L_{s(a)} \ \ \mbox{for every arrow}\ a\in A\, .$$
The collection of all endomorphisms is denoted by ${\rm End}({\bf E}, {\bf R})$.
\end{defi}
\begin{ex}
For any representation ({\bf E}, {\bf R}) of any quiver ${\bf Q}= \{A\rightrightarrows^s_t V\}$, the identity {\bf Id}, consisting of the maps ${\rm Id}_v: E_v \to E_v$ ($v\in V$), is an example of an endomorphism. This is simply because $ {\rm Id}_{t(a)} \circ R_a = R_a \circ {\rm Id}_{s(a)}$.
\end{ex}
\begin{ex}
If ${\bf F} \in C^{\infty}({\bf E}, {\bf R})$ is a smooth equivariant map of a representation $({\bf E}, {\bf R})$ and ${\bf F}(0)=0$ (meaning that $F_v(0)=0$ for every $v\in V$) then the derivative ${\bf L} = D{\bf F}(0)$ (consisting of the maps $L_v:=DF_v(0): E_v\to E_v$) is an example of an endomorphism. This follows from differentiating the identities $ F_{t(a)} \circ R_a = R_a \circ F_{s(a)}$ at $0$ and noting that $R_a(0)=0$.
\end{ex}
\begin{defi}
A {\it subrepresentation} ${\bf D}$ of a representation ({\bf E}, {\bf R}) of a quiver ${\bf Q}= \{A\rightrightarrows^s_t V\}$ is a set of linear subspaces $D_{v} \subset E_{v}$ ($v\in V$) such that 
$$R_a(D_{s(a)}) \subset D_{t(a)}  \ \ \mbox{for every arrow}\ a\in A\, .$$
\end{defi}
\noindent In other words, {\bf D} is a subrepresentation of ({\bf E}, {\bf R}) if the restriction $({\bf D}, {\bf R}|_{\bf D})$ defines a representation. Examples of subrepresentations are the eigenspaces of endomorphisms. In this paper, we will use generalised eigenspaces more often than eigenspaces, so we formulate the following as a separate proposition.

\begin{prop}\label{eigenspacesprop}
Let ${\bf L}$ be an endomorphism of a representation $({\bf E}, {\bf R})$ of a quiver ${\bf Q}= \{A\rightrightarrows^s_t V\}$. We say that $\lambda\in \mathbb{C}$ is an eigenvalue of ${\bf L}$ if there is at least one  $v\in V$ such that $\lambda$ is an eigenvalue of $L_v$. We then call $\lambda$ an eigenvalue of all the $L_{w}$ ($w\in V$) even if the corresponding eigenspace of $L_w$ is trivial.
\begin{itemize}
\item[{\it i)}] For $\lambda \in \mathbb{R}$, denote by $E^{\lambda}_v\subset E_v$ the generalised eigenspace of $L_v:E_v \to E_v$ for the eigenvalue $\lambda$. Then the $E_v^{\lambda}$ define a subrepresentation ${\bf E}^{\lambda}$ of $({\bf E}, {\bf R})$. 
\item[{\it ii)}] For $\mu \in \mathbb{C}\backslash \mathbb{R}$, denote by $E^{\mu, \bar \mu}_v \subset E_v$ the real generalised eigenspace of $L_v: E_v \to E_v$ for the eigenvalue pair $\mu, \bar \mu$. Then the $E_v^{\mu, \bar \mu}$ define a subrepresentation ${\bf E}^{\mu, \bar \mu}$ of $({\bf E}, {\bf R})$. 
\end{itemize}
\end{prop}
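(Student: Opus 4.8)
The plan is to exploit the single structural fact at our disposal, namely the intertwining relation $L_{t(a)}\circ R_a = R_a\circ L_{s(a)}$, and to observe that it propagates to arbitrary polynomials in the endomorphism. Concretely, I would first prove by induction on $k$ that $(L_{t(a)})^k\circ R_a = R_a\circ (L_{s(a)})^k$ for every $k\geq 0$: the base case $k=0$ is trivial, and the inductive step is a one-line computation, $(L_{t(a)})^{k+1}R_a = L_{t(a)}(L_{t(a)})^{k}R_a = L_{t(a)}R_a (L_{s(a)})^{k} = R_a (L_{s(a)})^{k+1}$, using the defining relation in the middle. By linearity this immediately upgrades to $p(L_{t(a)})\circ R_a = R_a\circ p(L_{s(a)})$ for every polynomial $p$.

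For part $i)$, recall that the generalised eigenspace admits the stabilised-kernel description $E_v^{\lambda} = \ker (L_v - \lambda\,\mathrm{Id}_v)^{k}$ for every $k$ at least as large as $\dim E_v$. Fix one such $k$, uniformly over all vertices (e.g.\ $k=\max_v \dim E_v$), and take $x\in E_{s(a)}^{\lambda}$, so that $(L_{s(a)} - \lambda\,\mathrm{Id}_{s(a)})^{k}x = 0$. Applying the polynomial relation with $p(t)=(t-\lambda)^{k}$ gives
$$(L_{t(a)} - \lambda\,\mathrm{Id}_{t(a)})^{k} (R_a x) = R_a\big((L_{s(a)} - \lambda\,\mathrm{Id}_{s(a)})^{k} x\big) = R_a(0)=0,$$
so that $R_a x \in \ker (L_{t(a)} - \lambda\,\mathrm{Id}_{t(a)})^{k}\subseteq E_{t(a)}^{\lambda}$. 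Hence $R_a(E_{s(a)}^{\lambda})\subseteq E_{t(a)}^{\lambda}$ for every arrow $a$, which is exactly the assertion that the $E_v^{\lambda}$ form a subrepresentation.

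For part $ii)$, the only modification is to replace the linear factor $t-\lambda$ by the real quadratic $q(t) = (t-\mu)(t-\bar\mu) = t^2 - 2\,\mathrm{Re}(\mu)\,t + |\mu|^2$, which has real coefficients and therefore defines a genuine operator on the real vector spaces $E_v$. The real generalised eigenspace for the conjugate pair is $E_v^{\mu,\bar\mu} = \ker q(L_v)^{k}$ for $k\geq \dim E_v$, and running the identical argument with $p=q^{k}$ yields $R_a(E_{s(a)}^{\mu,\bar\mu})\subseteq E_{t(a)}^{\mu,\bar\mu}$.

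I do not expect a genuine obstacle: the entire content sits in the polynomial commutation, and the one point requiring a little care is that $R_a$ is an arbitrary linear map in the representation, hence possibly neither injective nor surjective. This rules out arguments that compare spectra or diagonalise the $L_v$ simultaneously, and forces the annihilating-polynomial approach above. Phrasing the generalised eigenspaces through a single, uniformly chosen power $k$ also sidesteps any bookkeeping about the differing nilpotency indices of $L_{s(a)}$ and $L_{t(a)}$ at the source and target vertices.
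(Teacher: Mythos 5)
Your proposal is correct and follows essentially the same route as the paper's own proof: both rest on the observation that the intertwining relation $R_a\circ L_{s(a)}=L_{t(a)}\circ R_a$ propagates to powers of $L_v-\lambda\,\mathrm{Id}_v$ (respectively of the real quadratic $(L_v-\mu\,\mathrm{Id}_v)(L_v-\bar\mu\,\mathrm{Id}_v)$ for part \emph{ii)}), combined with the stabilised-kernel description of generalised eigenspaces. The only difference is presentational: you spell out the induction and the uniform choice of the exponent $k$, which the paper leaves implicit.
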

\begin{proof}
Recall that {\bf L} consists of linear maps $L_v: E_v\to E_v$ $(v\in V)$ for which $R_a \circ L_{s(a)} = L_{t(a)} \circ R_a$ for each $a\in A$. Choose $\lambda \in \mathbb{R}$ and assume that $x\in E_{s(a)}^{\lambda}$. This means that $(L_{s(a)} - \lambda {\rm Id}_{s(a)})^N (x) = 0$ for any $N\geq {\rm dim}\, E_{s(a)}$. But then 
$$(L_{t(a)} - \lambda {\rm Id}_{t(a)})^N (R_a x) = R_a (L_{s(a)} - \lambda {\rm Id}_{s(a)})^N (x)=0\, .$$ 
So $R_a(E_{s(a)}^{\lambda}) \subset E_{t(a)}^{\lambda}$. 
For $\mu \in \mathbb{C}\backslash \mathbb{R}$, $E_v^{\mu, \bar \mu} = \ker (({ L}_v - \mu {\rm Id}_v)({ L}_v - \overline{\mu} {\rm Id}_v))^N$. So the proof is completely analogous.
%So all the generalised eigenspaces define subrepresentations. As a consequence, so do the direct sums of these.
%The proof of {\it iv)} is similar.
\end{proof}

\section{Lyapunov-Schmidt reduction and quivers}\label{sec7}
In this and the coming sections, we will show that quiver symmetry can be preserved in a number of well-known dimension reduction techniques. We start with the most straightforward result, which shows that quiver symmetry can be preserved in the process of Lyapunov-Schmidt reduction. We only prove this  for steady state bifurcations at this point. How to preserve quiver symmetry in the Lyapunov-Schmidt reduction for periodic orbits is left as an open problem.
 
 Let us start by reviewing the classical Lyapunov-Schmidt reduction process for steady state bifurcations (so without any quiver symmetry) to set the stage for the proof of Theorem \ref{LStheorem} below. We consider the differential equation 
 $$\frac{dx}{dt} = F(x;\lambda) \ \mbox{for}\ x\in E \ \mbox{and}\ \lambda \in \Lambda \subset \mathbb{R}^p\, , $$
where   $F:E \times \Lambda \to E$ is a smooth vector field defined on a finite dimensional vector space $E$, depending smoothly on parameters from an open set $\Lambda \subset \mathbb{R}^p$. We also assume that for some value of the parameters this differential equation admits a steady state. We assume without loss of generality that $F(0; 0)=0$. The goal is to find all other steady states near $(x; \lambda)=(0; 0)$ by reducing the equation
$$F(x; \lambda)=0 \ \mbox{on}\ E\times \Lambda$$ 
to a simpler ``bifurcation equation'' with as few dimensions as possible.

To explain how this is done, denote by $L = D_xF(0;0): E\to E$ the derivative of $F$ in the direction of $E$ at $(0;0)$. We shall denote by $E^{\rm ker}\subset E$ the generalised kernel of $L$ (i.e., the generalised eigenspace for the eigenvalue zero) and by $E^{\rm im}$ its reduced image (the sum of the remaining generalised eigenspaces). We write 
$$\pi: E = E^{\rm im} \oplus E^{\rm ker} \to E^{\rm im}\, , \  x = x^{\rm im} + x^{\rm ker} \mapsto x^{\rm im}$$ 
for  the projection onto $E^{\rm im}$ along $E^{\rm ker}$ (i.e., $\pi$ has kernel $E^{\rm ker}$ and is the identity on $E^{\rm im}$). 
The derivative in the direction of $E^{\rm im}$ of
$$\pi  \circ F : E^{\rm im} \oplus E^{\rm ker} \times \Lambda \to E^{\rm im}$$
at $(0;0)$ is equal to $$D_{x^{\rm im}}(\pi \circ F)(0;0)   = \pi \circ L |_{E^{\rm im}} : E^{\rm im} \to E^{\rm im}\, .$$
By construction this map is invertible. By the implicit function theorem there is thus a unique smooth function
$$\phi: U \subset E^{\rm ker} \times \Lambda   \to W \subset E^{\rm im}$$ 
defined on some open neighbourhood $U$ of $(0;0)\in E^{\rm ker} \times \Lambda$ and mapping into an open neighborhood $W$ of $0\in E^{\rm im}$ that satisfies $$(\pi \circ F) (x^{\ker} + \phi(x^{\ker}; \lambda); \lambda)=0\, .$$ 
We clearly have $\phi(0;0)=0$ because $F(0;0)=0$. To find all other solutions $(x^{\rm im}, x^{\rm ker}; \lambda)\in W \times U$ to the equation $F(x^{\rm im}, x^{\rm ker}; \lambda)=0$, it then remains to solve only the reduced bifurcation equation
\begin{align}\label{redLS}
f(x^{\ker}; \lambda):= ((1 - \pi ) \circ F )(x^{\ker} +\phi(x^{\ker}; \lambda); \lambda) = 0\, ,
\end{align}
where
$$f: U \subset E^{\rm ker}  \times \Lambda   \to E^{\rm ker}\, .$$ 
This method to (locally) reduce the equation $F(x; \lambda)=0$ to the lower-dimensional equation $f(x^{\ker}; \lambda)=0$ is called Lyapunov-Schmidt reduction. 
 The following theorem states that the reduced equation inherits  quiver symmetry if it is present in the original equation.
 
 %Fixing $v\in V$, one may now perform the usual Lyapunov-Schmidt reduction procedure to compute solutions to the equation $F_v(x_v)=0$ for $x_v \in E_v$ near $x_v=0$. 

\begin{thr} \label{LStheorem}(Quiver equivariant Lyapunov-Schmidt theorem) Let $({\bf E}, {\bf R})$ be a representation of a quiver ${\bf Q}=\{A\rightrightarrows^s_t V\}$ and assume that ${\bf F}\in C^{\infty}({\bf E} \times \Lambda, {\bf R})$ is a smooth parameter-dependent {\bf Q}-equivariant map, i.e., for every $v\in V$ there is a smooth $F_v:E_v\times \Lambda \to E_v$ satisfying 
$$R_a ( F_{s(a)}(x; \lambda)) = F_{t(a)}(R_a(x); \lambda)   \ \mbox{for all} \ x\in E_{s(a)}, \lambda\in \Lambda\ \mbox{and}\ a\in A\, .$$ 
Assume moreover that ${\bf F}(0;0)=0$, i.e., $F_v(0;0)=0$ for all $v\in V$. 

%Then ${\bf L}:=D_1{\bf F}(0;0) \in {\rm End}({\bf E}, {\bf R})$  and ${\bf E}$ is the direct sum of  subrepresentations  ${\bf E}_0$ and ${\bf E}_{\neq 0}$. 
Then the reduced maps $f_v: U_v \subset E_v^{\rm ker} \times \Lambda \to E_v^{\rm ker}$ ($v\in V$) defined in (\ref{redLS}) satisfy 
$$R_a ( f_{s(a)}(x^{\ker}; \lambda)) = f_{t(a)}(R_a(x^{\ker}); \lambda)   \ \mbox{for all} \ a\in A\, $$ 
and for all $(x^{\ker};\lambda) \in \overline U_{s(a)}$ in some open neighbourhood $\overline U_{s(a)}$ of $(0;0)$. 

This means that the $f_v\ (v\in V)$ define a {\bf Q}-equivariant map ${\bf f}$ on an open neighbourhood of $(0; 0)$ of the subrepresentation ${\bf E}^{\ker}\times \Lambda$ of $({\bf E}\times \Lambda, {\bf R})$.
\end{thr}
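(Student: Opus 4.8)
The plan is to run the classical Lyapunov--Schmidt reduction vertex by vertex and to isolate the single structural fact that transfers equivariance from ${\bf F}$ to ${\bf f}$, namely that the defining splittings $E_v = E_v^{\rm im}\oplus E_v^{\rm ker}$ are compatible with the representation maps. First I would observe that ${\bf L} = D_x{\bf F}(0;0)$, consisting of the Jacobians $L_v = D_xF_v(0;0)$, is an endomorphism of $({\bf E},{\bf R})$, obtained by differentiating $R_a\circ F_{s(a)} = F_{t(a)}\circ R_a$ at the origin (as in the example preceding Proposition~\ref{eigenspacesprop}). Applying Proposition~\ref{eigenspacesprop} to the eigenvalue $0$ shows that the generalised kernels $E_v^{\rm ker}$ form a subrepresentation, and applying it to the remaining eigenvalues (and summing the resulting subrepresentations) shows the same for the reduced images $E_v^{\rm im}$. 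Hence
\begin{align*}
R_a(E_{s(a)}^{\rm ker})\subset E_{t(a)}^{\rm ker}\quad\text{and}\quad R_a(E_{s(a)}^{\rm im})\subset E_{t(a)}^{\rm im}\quad\text{for every }a\in A,
\end{align*}
so $R_a$ respects both summands and therefore commutes with the projections: $\pi_{t(a)}\circ R_a = R_a\circ\pi_{s(a)}$, and likewise $(1-\pi_{t(a)})\circ R_a = R_a\circ(1-\pi_{s(a)})$.

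The heart of the argument is to show that the implicitly defined maps $\phi_v$ intertwine the $R_a$, i.e.\ $R_a(\phi_{s(a)}(x^{\ker};\lambda)) = \phi_{t(a)}(R_a(x^{\ker});\lambda)$ for $(x^{\ker};\lambda)$ near $(0;0)$. I would prove this using the \emph{uniqueness} clause of the implicit function theorem. Writing $y^{\ker}:=R_a(x^{\ker})\in E_{t(a)}^{\rm ker}$, the element $R_a(\phi_{s(a)}(x^{\ker};\lambda))$ lies in $E_{t(a)}^{\rm im}$ by the inclusion above, and combining the equivariance of ${\bf F}$ with the commutation of $\pi$ and $R_a$ gives
\begin{align*}
\pi_{t(a)}F_{t(a)}\bigl(y^{\ker}+R_a(\phi_{s(a)}(x^{\ker};\lambda));\lambda\bigr)
&= \pi_{t(a)}R_a\,F_{s(a)}\bigl(x^{\ker}+\phi_{s(a)}(x^{\ker};\lambda);\lambda\bigr)\\
&= R_a\,\pi_{s(a)}F_{s(a)}\bigl(x^{\ker}+\phi_{s(a)}(x^{\ker};\lambda);\lambda\bigr)=0,
\end{align*}
the last equality being the defining property of $\phi_{s(a)}$. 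Thus $R_a(\phi_{s(a)}(x^{\ker};\lambda))$ solves the same equation near $0\in E_{t(a)}^{\rm im}$ that $\phi_{t(a)}(y^{\ker};\lambda)$ solves uniquely, so the two coincide.

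Granting this intertwining, the conclusion is a direct computation using $R_a\circ(1-\pi_{s(a)})=(1-\pi_{t(a)})\circ R_a$, equivariance of ${\bf F}$, linearity of $R_a$, and finally the intertwining of $\phi$:
\begin{align*}
R_a\bigl(f_{s(a)}(x^{\ker};\lambda)\bigr)
&= (1-\pi_{t(a)})F_{t(a)}\bigl(R_ax^{\ker}+R_a\phi_{s(a)}(x^{\ker};\lambda);\lambda\bigr)\\
&= (1-\pi_{t(a)})F_{t(a)}\bigl(R_ax^{\ker}+\phi_{t(a)}(R_ax^{\ker};\lambda);\lambda\bigr)= f_{t(a)}\bigl(R_a(x^{\ker});\lambda\bigr).
\end{align*}
I expect the main obstacle to be the bookkeeping of neighbourhoods in the intertwining step: uniqueness of $\phi_{t(a)}$ holds only on a fixed neighbourhood $W_{t(a)}$ of $0\in E_{t(a)}^{\rm im}$, so I must shrink the source domain $U_{s(a)}$ to an $\overline U_{s(a)}$ on which $R_a(\phi_{s(a)}(x^{\ker};\lambda))$ is guaranteed to land in $W_{t(a)}$. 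This is possible by continuity since $\phi_{s(a)}(0;0)=0$, and intersecting the finitely many such neighbourhoods over the finitely many arrows $a\in A$ produces the single neighbourhood on which ${\bf f}$ is ${\bf Q}$-equivariant.
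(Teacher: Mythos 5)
Your proposal is correct and follows essentially the same route as the paper's own proof: Proposition~\ref{eigenspacesprop} gives the invariance of the kernels and reduced images under the $R_a$, hence commutation with the projections; the uniqueness clause of the implicit function theorem yields the intertwining $R_a\circ\phi_{s(a)}=\phi_{t(a)}\circ R_a$ on a shrunken domain; and the final computation with $1-\pi$ is identical. The only cosmetic difference is in the last step: since the domains $\tilde U_a$ attached to different arrows live in different spaces $E_{s(a)}^{\ker}\times\Lambda$, the intersection should be taken per vertex over the outgoing arrows, $\overline U_v=\bigcap_{a\,:\,s(a)=v}\tilde U_a$, rather than over all of $A$ at once, which is exactly how the paper phrases it.
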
 
\begin{proof}
Fix an $a\in A$ and consider the map $R_a: E_{s(a)}\to E_{t(a)}$. Recall that $R_a \circ L_{s(a)} = L_{t(a)} \circ R_a$, where $L_v = D_xF_v(0;0)$, so that $R_a(E_{s(a)}^{\ker}) \subset E_{t(a)}^{\ker}$ and  $R_a(E_{s(a)}^{\rm im}) \subset E_{t(a)}^{\rm im}$ by Proposition \ref{eigenspacesprop}. It follows in particular that 
$$R_a \circ \pi_{s(a)} = \pi_{t(a)} \circ R_a\, .$$ 
Recall that by definition of $\phi_{s(a)}: U_{s(a)} \to W_{s(a)}$ it holds that 
$$(\pi_{s(a)} \circ F_{s(a)})( x^{\ker} + \phi_{s(a)}(x^{\ker}; \lambda); \lambda) = 0\, $$
for all $(x^{\ker}; \lambda)\in U_{s(a)}$. It follows that
\begin{align}\nonumber
0 & = (R_a \circ \pi_{s(a)} \circ F_{s(a)}) ( x^{\ker} + \phi_{s(a)}(x^{\ker}; \lambda); \lambda) \\ \nonumber & = (\pi_{t(a)} \circ F_{t(a)}) ( R_a (x^{\ker}) + R_a (\phi_{s(a)}(x^{\ker}; \lambda )); \lambda)\, .
\end{align}
By definition of $\phi_{t(a)} : U_{t(a)} \to W_{t(a)}$ it thus holds that 
$$\phi_{t(a)}(R_a (x^{\ker}); \lambda) = R_a (\phi_{s(a)}(x^{\ker}; \lambda) )\, $$
for all $(x^{\ker}; \lambda)\in U_{s(a)}$ with $(R_a(x^{\ker}); \lambda)\in U_{t(a)}$ and $R_a(\phi_{s(a)}(x^{\ker}; \lambda))\in W_{t(a)}$. The $(x^{\ker};\lambda)$ for which these inclusions hold form an  open neighbourhood $ \tilde U_{a}$ of $(0;0)$. 
For  $(x^{\ker}; \lambda) \in  \tilde U_{a}$ we  then have that 
\begin{align}\nonumber
&R_a (f_{s(a)} (x^{\ker}; \lambda))=  (R_a \circ ( 1- \pi_{s(a)}) \circ F_{s(a)} )(x^{\ker}+\phi_{s(a)}(x^{\ker}; \lambda); \lambda ) \\ \nonumber
&= (( 1 - \pi_{t(a)}) \circ F_{t(a)} )(R_a (x^{\ker})+\phi_{t(a)}(R_a(x^{\ker}); \lambda);\lambda) = f_{t(a)} (R_a (x^{\ker}); \lambda)\, . 
\end{align}
This would prove the theorem if for every vertex $v\in V$ there was at most one arrow $a\in A$ with $s(a)=v$. If there are more such arrows, then the finite intersection $\overline{U}_v := \bigcap_{a: s(a)=v} \tilde U_{a}$  will satisfy  the requirements.
\end{proof}

\section{Center manifolds and quivers} \label{sec8}
In this section we show that quiver symmetry can be preserved in the process of center manifold reduction. The main result is Theorem \ref{CMtheorem} below, which  is a {\bf Q}-equivariant {\it global} center manifold theorem. We encountered various obstructions in trying to prove a fully general ${\bf Q}$-equivariant {\it local} center manifold theorem. These will be discussed in Remark \ref{localproblem} below.

We start our analysis by  recalling the classical global center manifold theorem \cite{vdbauw}. We will not prove this classical theorem here, and for simplicity we only formulate a version of the theorem without parameters.
To formulate the classical result, let $E$ be a finite dimensional real vector space and $L: E\to E$ a linear map. Let us denote by $E^{ c}$ the center subspace of $L$ (the sum of the generalised eigenspaces of $L$ for the eigenvalues on the imaginary axis) and by $E^{ h}$ the hyperbolic subspace of $L$ (the sum of the generalised eigenspaces of $L$ for the eigenvalues not lying on the imaginary axis). We shall denote by 
$$\pi^c : E = E^{  c} \oplus E^{h} \to E^{ c}\ \mbox{and by}\ \pi^h: = 1-\pi^c : E = E^{ c} \oplus E^{ h} \to E^{ h}$$ 
the projections corresponding to the splitting $E = E^{ c} \oplus E^{ h}$. Now we can formulate the global center manifold theorem, referring to \cite{vdbauw} for a proof.

\begin{thr}\label{cmthm}
 Let $L:E\to E$ be a linear map and $k \in \{1,2,3, \ldots\}$. Then there is an $\varepsilon = \varepsilon(L, k) >0$ for which the following holds. 

If $F: E\to E$ is a $C^k$ vector field that satisfies $F(0)=0$, $DF(0)=L$,
$$  \sup_{x\in E} ||D^{\alpha} (F(x)-L) || < \infty \ \mbox{for all}\ |\alpha| \leq k \ \mbox{and}\ \sup_{x\in E} ||DF(x)-L|| < \varepsilon\, ,$$
 then there exists a $C^k$ map $\phi: E^{ c} \to  E^{ h}$, satisfying $\phi(0)=0$ and $D\phi(0)=0$, of which the graph 
 $$M^{ c}:= \{ x^c + \phi(x^c) \ |\ x^c \in E^c\} \subset E$$
 is an invariant manifold for the flow of the differential equation
$\frac{dx}{dt} = F(x)$. Moreover, if we denote this flow by $e^{tF}$, then
$$M^{ c} = \left\{ x\in E\ \left| \ \sup_{t\in \mathbb{R}  } || (\pi^h \circ e^{tF})(x) || < \infty\, \right. \right\} .$$
We call $M^{ c}$ the {\rm global center manifold} of $F$.  
 \end{thr}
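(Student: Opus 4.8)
The plan is to prove this by the Lyapunov--Perron method. I would first rewrite the equation as $\dot{x} = Lx + g(x)$, where $g(x) := F(x) - Lx$ satisfies $g(0) = 0$, $Dg(0) = 0$, and, by the hypotheses, is globally bounded together with its derivatives up to order $k$ and has Lipschitz constant $\mathrm{Lip}(g) < \varepsilon$. Next I would refine the hyperbolic splitting as $E^h = E^s \oplus E^u$ into the stable and unstable subspaces of $L$, with associated projections $\pi^s$ and $\pi^u$ summing to $\pi^h$. On these subspaces the linear flow obeys exponential dichotomy estimates: there are $C \ge 1$ and $\beta > 0$ with $\|e^{tL}|_{E^s}\| \le C e^{-\beta t}$ for $t \ge 0$ and $\|e^{tL}|_{E^u}\| \le C e^{\beta t}$ for $t \le 0$, while on the center subspace $\|e^{tL}|_{E^c}\| \le C_\delta e^{\delta|t|}$ for every $\delta > 0$, since the spectrum of $L|_{E^c}$ is purely imaginary.

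Fixing $\delta < \eta < \beta$, I would work in the Banach space $B_\eta = \{x : \mathbb{R} \to E \text{ continuous} \mid \|x\|_\eta := \sup_t e^{-\eta|t|}\|x(t)\| < \infty\}$. The heart of the method is that a full-time solution lies in $B_\eta$ precisely when, for $\xi := \pi^c x(0) \in E^c$, it solves the Lyapunov--Perron integral equation
\begin{align*}
x(t) = {}& e^{tL}\xi + \int_0^t e^{(t-\tau)L}\pi^c g(x(\tau))\,d\tau + \int_{-\infty}^t e^{(t-\tau)L}\pi^s g(x(\tau))\,d\tau \\
&{} - \int_t^{\infty} e^{(t-\tau)L}\pi^u g(x(\tau))\,d\tau .
\end{align*}
Using the dichotomy estimates, I would bound the three integral operators and show that the right-hand side is a contraction of $B_\eta$ whenever $\varepsilon$ is small relative to $\beta - \eta$; the improper stable and unstable integrals converge because the decay $e^{-\beta|t-\tau|}$ dominates the $e^{\eta|\tau|}$ growth allowed in $B_\eta$. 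Banach's fixed point theorem then produces a unique $x(\cdot\,;\xi) \in B_\eta$ for each $\xi \in E^c$, and I would set $\phi(\xi) := \pi^h x(0;\xi)$; evaluating the fixed-point equation at $t = 0$ gives $\pi^c x(0;\xi) = \xi$, so $M^c$ is the graph of $\phi$ over $E^c$. Since $g(0) = 0$ forces $x(\cdot\,;0) \equiv 0$, one gets $\phi(0) = 0$, and $Dg(0) = 0$ yields $D\phi(0) = 0$.

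For invariance, time-translation of the fixed point together with its uniqueness shows that $t \mapsto x(t + s;\xi)$ is again a fixed point, now with center datum $\pi^c x(s;\xi)$, so the flow maps $M^c$ into itself. The variational characterization is where the global boundedness of $g$ is essential: from the $t = 0$ form of the integral equation one gets $\|\phi(\xi)\| \le 2C\beta^{-1}\sup_{x \in E}\|g(x)\|$, so $\phi$ is \emph{bounded}; hence for $x_0 \in M^c$ the hyperbolic part $\pi^h e^{tF}(x_0) = \phi(\pi^c e^{tF}(x_0))$ stays bounded for all $t$. Conversely, if $\sup_t \|\pi^h e^{tF}(x_0)\| < \infty$, then since $\|\pi^c e^{tF}(x_0)\|$ grows at most like $e^{\delta|t|}$ (again by boundedness of $g$), the whole trajectory lies in $B_\eta$ and therefore coincides with $x(\cdot\,;\pi^c x_0)$, placing $x_0$ on $M^c$.

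I expect the genuine obstacle to be the $C^k$ regularity of $\phi$. Formally differentiating the fixed-point equation $j$ times produces fixed-point equations for the derivatives $D^j x(\cdot\,;\xi)$, but these live in spaces carrying the heavier weight $e^{j\eta|t|}$, and the associated integral operators remain contractions only under the spectral-gap condition $k\eta < \beta$. Securing this uniformly --- and thereby transferring smoothness from the contraction to its fixed point --- requires the fiber contraction theorem, or the contraction-on-a-scale-of-Banach-spaces argument of Vanderbauwhede and van Gils, and it is exactly this constraint, forcing $\eta$ (and hence $\varepsilon$) to shrink as $k$ grows, that explains the dependence $\varepsilon = \varepsilon(L,k)$ in the statement.
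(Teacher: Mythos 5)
Your proof outline is correct, and it is essentially the proof the paper relies on: the paper does not prove this theorem itself but quotes it as classical and refers to \cite{vdbauw}, whose argument is exactly your Lyapunov--Perron scheme --- the weighted spaces $B_\eta$, the contraction obtained when the global Lipschitz constant of $F-L$ is small relative to the spectral gap, and the contraction-on-a-scale-of-Banach-spaces (fiber contraction) device for $C^k$ smoothness under the condition $k\eta < \beta$. You also correctly identified the two points that matter for how the theorem is used later in the paper: the global bounds on $F-L$ are what make the sup-characterization of $M^c$ valid, and the gap condition is why $\varepsilon$ must depend on $k$ as well as on $L$.
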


\begin{remk}
Let $x(t)$ be an integral curve of $F$, i.e., $\frac{dx(t)}{dt} = F(x(t))$, and let us write $x^c(t):=\pi^c( x(t))$. Then 
$$\frac{dx^c(t)}{dt} = (\pi^c\circ F)(x(t)) \, .$$ 
If $x(t)$ happens to lie inside $M^{ c}$, then by definition of $\phi$ we moreover have that
$x(t) =  x^c(t) + \phi (x^c(t))$.
 So then
$$\frac{dx^c(t)}{dt} =  (\pi^c\circ F)  (x^c(t) + \phi(x^c(t)))\, .$$
This proves that the restriction of $\pi^c$ to $M^{ c}$ sends integral curves of $F$ in $M^c$ to integral curves of the vector field $F^c: E^{  c} \to E^{  c}$ defined by 
$$F^{  c}(x^c) := (\pi^c \circ F)(x^c+\phi(x^c))\, .$$
We shall call this vector field $F^{ c}$ on $E^{ c}$ the {\it center manifold reduction} of $F$. 
\end{remk}

\noindent We are now ready to formulate our result on quivers and center manifolds, remarking that its proof is more or less identical to that of Lemma \ref{cmff}.
 
\begin{thr} \label{CMtheorem}(Quiver equivariant center manifold theorem) 
Let $({\bf E}, {\bf R})$ be a representation of a quiver ${\bf Q}=\{A\rightrightarrows^s_t V\}$ and let ${\bf L} \in {\rm End}({\bf E}, {\bf R})$ and  ${\bf F}\in C^{k}({\bf E}, {\bf R})$ ($k=1,2,\ldots$) with ${\bf F}(0)=0$ and $D{\bf F}(0)={\bf L}$. 

So we assume that for every $v\in V$ there is a linear map $L_v:E_v\to E_v$ and a $C^k$ smooth  map $F_v:E_v \to E_v$ with $F_v(0)=0$, $DF_v(0)=L_v$, such that
$$R_a \circ L_{s(a)} = L_{t(a)} \circ R_a \ \mbox{and}\  R_a \circ F_{s(a)} = F_{t(a)} \circ R_a    \ \mbox{for all} \ a\in A\, .$$ 
Assume moreover that each of the $L_v$ and $F_v$ ($v\in V$) satisfy the  bounds of Theorem \ref{cmthm}, so that each $F_v$ admits a unique global center manifold $M^{c}_v$.

%Then ${\bf L}:=D_1{\bf F}(0;0) \in {\rm End}({\bf E}, {\bf R})$  and ${\bf E}$ is the direct sum of  subrepresentations  ${\bf E}_0$ and ${\bf E}_{\neq 0}$. 
Then $R_a$ maps the global center manifold of $F_{s(a)}$ into that of $F_{t(a)}$, i.e.,
$$R_a(M^c_{s(a)}) \subset M^c_{t(a)}\, .$$
Moreover, 
the center manifold reductions $F_v^{ c}: E_v^{ c} \to E_v^{ c}$ ($v\in V$) satisfy 
$$R_a \circ F_{s(a)}^{ c} =   F_{t(a)}^{ c} \circ R_a  \ \mbox{for all} \ a\in A\, .$$ 
So the $F^c_v$ define a {\bf Q}-equivariant vector field ${\bf F}^{c}$ on the subrepresentation ${\bf E}^{ c}$ of $({\bf E}, {\bf R})$ consisting of the center subspaces $E^{ c}_v$ ($v\in V$).
\end{thr}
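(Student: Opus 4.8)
The plan is to mimic the proof of Lemma \ref{cmff} verbatim, working one arrow at a time: I fix an arrow $a\in A$ and establish all the claims for the single linear intertwiner $R_a\colon E_{s(a)}\to E_{t(a)}$. The first step is purely linear. Since ${\bf L}\in{\rm End}({\bf E},{\bf R})$, Proposition \ref{eigenspacesprop} applies to $L_{s(a)}$ and $L_{t(a)}$, giving $R_a(E^c_{s(a)})\subset E^c_{t(a)}$ and $R_a(E^h_{s(a)})\subset E^h_{t(a)}$ (here $E^c_v$ and $E^h_v$ are sums of generalised eigenspaces of $L_v$ on and off the imaginary axis). Because $R_a$ respects both summands of the splittings $E_v=E^c_v\oplus E^h_v$, it intertwines the associated projections, i.e.\ $R_a\circ\pi^c_{s(a)}=\pi^c_{t(a)}\circ R_a$ and likewise for $\pi^h$.

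Next I would show $R_a(M^c_{s(a)})\subset M^c_{t(a)}$ using the flow characterisation from Theorem \ref{cmthm}. The equivariance $R_a\circ F_{s(a)}=F_{t(a)}\circ R_a$ means $R_a$ sends integral curves of $F_{s(a)}$ to integral curves of $F_{t(a)}$, so $R_a\circ e^{tF_{s(a)}}=e^{tF_{t(a)}}\circ R_a$. Combining this with the projection identity from the first step, for $x\in M^c_{s(a)}$ one has
$$\pi^h_{t(a)}\bigl(e^{tF_{t(a)}}(R_a x)\bigr)=R_a\bigl(\pi^h_{s(a)}(e^{tF_{s(a)}}(x))\bigr),$$
hence $\sup_{t\in\mathbb{R}}\|\pi^h_{t(a)}(e^{tF_{t(a)}}(R_a x))\|\leq\|R_a\|\cdot\sup_{t\in\mathbb{R}}\|\pi^h_{s(a)}(e^{tF_{s(a)}}(x))\|<\infty$, so $R_a x\in M^c_{t(a)}$. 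Unlike in Lemma \ref{cmff}, where $\|R\|=1$, here I keep the operator norm $\|R_a\|$ explicit, but since it is finite this changes nothing.

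The third step is the key geometric one: writing the center manifolds as graphs $M^c_v=\{x^c_v+\phi_v(x^c_v)\}$ of maps $\phi_v\colon E^c_v\to E^h_v$, I want $R_a\circ\phi_{s(a)}=\phi_{t(a)}\circ R_a$. Applying $R_a$ to $x=x^c+\phi_{s(a)}(x^c)$ for $x\in M^c_{s(a)}$ gives $R_a x=R_a(x^c)+R_a(\phi_{s(a)}(x^c))$; by the first step $R_a(x^c)\in E^c_{t(a)}$ and $R_a(\phi_{s(a)}(x^c))\in E^h_{t(a)}$, while by the second step $R_a x\in M^c_{t(a)}$. Uniqueness of the graph decomposition of the point $R_a x$ forces $R_a(\phi_{s(a)}(x^c))=\phi_{t(a)}(R_a x^c)$, which is the desired intertwining. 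Finally, I conclude the equivariance of the reductions $F^c_v(x^c)=(\pi^c_v\circ F_v)(x^c+\phi_v(x^c))$ by the same chain of substitutions as in Lemma \ref{cmff}:
$$R_a(F^c_{s(a)}(x^c))=(\pi^c_{t(a)}\circ F_{t(a)})\bigl(R_a x^c+\phi_{t(a)}(R_a x^c)\bigr)=F^c_{t(a)}(R_a x^c),$$
using in turn the projection identity, the equivariance of ${\bf F}$, linearity of $R_a$, and the $\phi$-intertwining. I expect the main obstacle to be the third step: it is the only place where the global (rather than merely infinitesimal) structure enters, and it hinges on simultaneously knowing that $R_a$ preserves the invariant splitting \emph{and} maps one center manifold into the other, so that uniqueness of the graph representation can be invoked. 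The remaining steps are routine given Proposition \ref{eigenspacesprop} and the template of Lemma \ref{cmff}.
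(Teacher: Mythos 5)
Your proposal is correct and follows essentially the same route as the paper's own proof: Proposition \ref{eigenspacesprop} for the projection intertwining, the flow characterisation of $M^c_v$ from Theorem \ref{cmthm} together with $R_a \circ e^{tF_{s(a)}} = e^{tF_{t(a)}} \circ R_a$ to get $R_a(M^c_{s(a)}) \subset M^c_{t(a)}$, uniqueness of the graph decomposition to obtain $R_a \circ \phi_{s(a)} = \phi_{t(a)} \circ R_a$, and the same substitution chain for the reduced vector fields. No gaps; even your remark about keeping $\|R_a\|$ explicit matches the paper's treatment.
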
 
\begin{proof}
Fix an $a\in A$. By Proposition \ref{eigenspacesprop} we have that $R_a(E_{s(a)}^{ c}) \subset E_{t(a)}^{ c}$ and  $R_a(E_{s(a)}^{ h}) \subset E_{t(a)}^{ h}$, so in particular it holds that 
$$R_a \circ \pi_{s(a)}^{ c} = \pi_{t(a)}^{ c} \circ R_a \ \mbox{and}\ R_a \circ \pi_{s(a)}^{ h} = \pi_{t(a)}^{ h} \circ R_a\, .$$ 
Next, choose an $x\in M^{ c}_{s(a)}$ and recall that for such $x$ we have 
$$\sup_{t\in \mathbb{R}} || (\pi^{ h}_{s(a)} \circ e^{t F_{s(a)}}) (x)|| < \infty\, .$$
Because  $R_{a} \circ e^{t F_{s(a)}} =  e^{t F_{t(a)}} \circ R_a$ and $R_a \circ \pi_{s(a)}^{ h} = \pi_{t(a)}^{ h} \circ R_a$, this implies that
\begin{align}\nonumber
\sup_{t\in \mathbb{R}} || (\pi^{ h}_{t(a)}  \circ e^{t F_{t(a)}}) (R_a(x)) || & = \sup_{t\in \mathbb{R}} || R_a ( \pi^{ h}_{s(a)}  \circ e^{t F_{s(a)}}) (x)) ||  \\ \nonumber
 \leq ||R_a||  \cdot \sup_{t\in \mathbb{R}}|| ( \pi^{ h}_{s(a)}&  \circ e^{t F_{s(a)}})(x) || < \infty \, ,
 \end{align}
where $||R_a||$ is the operator norm of $R_a$. We conclude that $R_a (x) \in M^{ c}_{t(a)}$. This proves that 
$$R_a (M^{ c}_{s(a)}) \subset M^{ c}_{t(a)}\, .$$
Next, recall that if $x\in M^{ c}_{s(a)}$,  then it is of the form 
\begin{align} \nonumber 
& x  = \underbrace{x^c}_{\in E^{ c}_{s(a)}} +  \underbrace{\phi_{s(a)}(x^c)}_{\in E^{ h}_{s(a)}} \, ,
\end{align}
where $\phi_{s(a)}: E^{ c}_{s(a)} \to E^{ h}_{s(a)}$ is the $C^k$ function whose graph is $M_{s(a)}^{ c}$. 
Applying $R_a$ to this equality we find that 
$$R_a (x) = \underbrace{R_a (x^c)}_{\in E_{t(a)}^{ c}} + \underbrace{R_a (\phi_{s(a)}(x^c))}_{\in E^{ h}_{t(a)}} \in M^{ c}_{t(a)}\, .$$
But every $X\in M^{ c}_{t(a)}$ can uniquely be written in  the form 
\begin{align} \nonumber 
& X  = \underbrace{X^c}_{\in E^{ c}_{t(a)}} +  \underbrace{\phi_{t(a)}(X^c)}_{\in E^{ h}_{t(a)}} \, ,
\end{align}
where $\phi_{t(a)}: E^{ c}_{t(a)} \to E^{ h}_{t(a)}$ is the $C^k$ function whose graph is $M^{ c}_{t(a)}$.
This proves that $R_a(\phi_{s(a)}(x^c)) = \phi_{t(a)} (R_a(x^c))$, i.e., that
 $$R_a \circ \phi_{s(a)} = \phi_{t(a)} \circ R_a \, .$$
%In particular, if $(x,y) = (x_c, y_c) + \phi(x_c, y_c)$, then in fact 
%$$x=R(x,y) = x_c + R\phi(x_c, y_c) = x_c + \psi(R(x_c, y_c)) = x_c + \psi(x_c)\, .$$ 
Recalling the definition of the center manifold reductions $F^c_{v}: E_{v}^c \to E_{v}^c $, we finish by noticing that 
\begin{align}\nonumber 
R_a(F^c_{s(a)} (x^c) ) =&  (R_a\circ \pi^c_{s(a)} \circ F_{s(a)} )(x^c + \phi_{s(a)}( x^c))  \\ \nonumber
 = & (\pi^c_{t(a)} \circ F_{t(a)} \circ R_a)(x^c + \phi_{s(a)}( x^c))  \\ \nonumber
= & (\pi^c_{t(a)} \circ F_{t(a)}) ( R_a (x^c) + R_a(\phi_{s(a)}( x^c)))  \\ \nonumber
= & (\pi^c_{t(a)} \circ F_{t(a)}) ( R_a (x^c) + \phi_{t(a)} ( R_a (x^c)))  \\ \nonumber
%= & (\Pi_c \circ f) ( x_c + \psi(  x_c, y_c))  \\ \nonumber
= & F^c_{t(a)}(R_a(x^c))\, ,
\end{align}
i.e., $R_a\circ F^c_{s(a)} = F^c_{t(a)} \circ R_a$. This finishes the proof. 
\end{proof}
\begin{remk}\label{localproblem}
Theorem \ref{CMtheorem} is a {\bf Q}-equivariant {\it global} center manifold theorem. Assuming that the first derivatives of the nonlinearities $F_v-L_v$ are globally small, and that their higher derivatives are globally bounded, it guarantees the existence of a globally defined center manifold. The global conditions on the nonlinearities are rather unnatural though, as in practice the nonlinearities will only be small in a neighbourhood of the equilibrium under consideration. The global center manifold theorem is a (very important) step in the proof of a {\it local} center manifold theorem - where the global bounds are not required and a center manifold is guaranteed in a small neighbourhood of the equilibrium. 

 Although it is reasonable to assume that a  local version of Theorem \ref{CMtheorem} holds as well, we were so far unable to prove such a theorem for general {\bf Q}-equivariant systems. The problem arises from the way one usually makes the step from a global to a local center manifold theorem: one replaces the unbounded nonlinearities $F_v-L_v$ by globally bounded nonlinearities, for example by replacing the ODE $\frac{dx}{dt} = F_v(x)$ by the ODE $\frac{dx}{dt} = \widetilde F_v(x):= L_vx + \zeta_v(x) (F_v(x)-L_vx)$, where $\zeta_v: E_v \to \mathbb{R}$  is a smooth bump function with $\zeta_v(x) = 1$ for small $||x||$. By shrinking the support of $\zeta_v$ one can then satisfy the assumptions of Theorem \ref{cmthm}. The problem that we encounter is that in general it is unclear how to choose the bump functions $\zeta_v \, (v\in V)$ in such a way that ${\bf Q}$-equivariance is preserved.

This problem can sometimes be circumvented if the ${\bf Q}$-equivariant vector field happens to be an admissible vector field $F^{\bf N}$ for some network ${\bf N}$. In that case one can multiply the nonlinear parts of each of the separate components $F^{\bf N}_n$  of the vector field with a bump function, choosing the same bump function for nodes with the same colour (more precisely, choosing bump functions that are invariant under the symmetry groupoid $\mathbb{G}_{\bf N}$). The resulting vector field $\widetilde F^{\bf N}$ will then have the same network structure as $F^{\bf N}$, and will hence admit for example the same quiver of subnetworks and quiver of quotient networks. 
In \cite{CMR}, \cite{CMRSIREV} it was shown in detail how this works out for so-called {\it fully homogeneous networks with asymmetric inputs}.  It is not hard to see that the same procedure can be applied to the admissible maps of any network, see Definition \ref{defadmissible}.

On the other hand, quiver symmetry is not always the same as network structure. Therefore even proving an equivariant local center manifold theorem for specific quivers remains problematic.  The mentioned fully homogeneous networks with asymmetric inputs are an exception, as we proved in \cite{RinkSanders3}  that such networks admit a quiver symmetry that is equivalent to a particular network structure (which may be more general than the original network structure though). Such a result will not hold for other types of networks and quivers. For instance, it is not clear to us that equivariance of ${\bf F}$ under {\bf QuoQ(N)} implies that ${\bf F}$ is an admissible vector field for some network that is somehow related to {\bf N}. We therefore  do not  know at this moment how to prove a {\bf QuoQ(N)}-equivariant local center manifold theorem.
\end{remk}

\section{Normal forms and quivers}\label{sec9}
The normal form of a local dynamical system displays the system in a ``standard'' or ``simple'' form. Normal forms are an important tool in the study of the dynamics and bifurcations of maps and vector fields near equilibria, cf. \cite{murdock}, \cite{sanvermur}. The goal of this section is to prove  Theorem \ref{normalformtheorem} below. This theorem states that it can be arranged that the normal form of a dynamical system possesses the same quiver symmetry as the original system. For simplicity, we do not consider parameter dependent vector fields in this section (but it is straightforward to prove the same result for systems with parameters as well).

We start by  recalling one of the classical results of normal form theory in Theorem \ref{normalnormalformtheorem}. To this end, let us consider a smooth ODE 
$$\frac{dx}{dt} = F(x) = F^0(x) +  F^1(x)+ F^2(x)+ \ldots$$
on a  finite-dimensional vector space $E$. That is, $F\in C^{\infty}(E)$ is a smooth vector field on $E$, $F(0)=0$, and $F^k \in P^k(E)$ 
where
$$P^k(E):=\{ F^k: E\to E \ |\ F^k   \ \mbox{is a homogeneous polynomial of degree}\ k+1\}\, .$$
%By the way, the subscript $v$ in our notation has no significance at this moment but will become useful once we consider quivers. 
The idea is that we now try to make local coordinate transformations $$x\mapsto y = \Phi(x) = x + \mathcal{O}(||x||^2)$$ that simplify (in one way or another) the higher order terms $F^k  \ (k=1, 2, \ldots)$ of $F$. There are various ways to define such coordinate transformations, and there are various ways to define what it means to ``simplify'' a local ODE.  Theorem \ref{normalnormalformtheorem} states one of the many well-known results.
 
% We can now formulate the Birkhoff normal form theorem.
\begin{thr}\label{normalnormalformtheorem}(Normal form theorem)
Let $E$ be a finite dimensional real vector space and let $F \in C^{\infty}(E)$ be a smooth vector field with $F(0)=0$ and Taylor expansion 
$$F = F^0 + F^1 + F^2 + \ldots , \ {\it where}\ F^k \in P^k(E)\, .$$
 Then, for every $1\leq r<\infty$, there exists an analytic diffeomorphism $\Phi$, sending an open neighborhood of $0$ in $E$ to an open neighborhood of $0$ in $E$, so that the coordinate transformation $x\mapsto y = \Phi(x) = x + \mathcal{O}(||x||^2)$ transforms the ODE 
 $$\frac{dx}{dt} = F(x)$$ 
 into an ODE of the form
 $$\frac{dy}{dt} = \overline F(y)$$ 
 with
$$  \overline F = F^0 + \overline F^1 +\overline F^2 + \ldots \ {\rm with}\ \overline F^k \in P^k(E)\, ,$$
while at the same time it holds that
\begin{align}\label{commutator}
e^{t L^S}\circ \overline F^k = \overline F^k \circ e^{t L^S} \ \mbox{for all} \ 1\leq k\leq r\ \mbox{and}\ t\in \mathbb{R}\, .
\end{align}
Here, $L^S=(F^0)^S$ denotes the semisimple part of $F_0$ and $e^{t L^S}$ its (linear) time-$t$ flow.
%One may choose $N_k$ so that $N_k \subset {\rm ker\, ad}_{L^S}$ and $N_k \cap {\rm im\, ad}_{L^N} = \{0\}$. 
\end{thr}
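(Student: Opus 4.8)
The plan is to normalise the Taylor expansion one degree at a time, for degrees $1$ through $r$, using the classical homological-equation and Lie-series machinery. Write $L:=F^0$ for the linear part and let $L=L^S+L^N$ be its Jordan decomposition into commuting semisimple and nilpotent parts. For each $k\geq 1$ the Lie bracket with $L$ defines a linear operator
$$\operatorname{ad}_{L}: P^k(E)\to P^k(E)\, ,\quad \operatorname{ad}_L(G):=[L,G]\, ,$$
where $[L,G](x)=DG(x)\,Lx-L\,G(x)$; it preserves the degree because $L\in P^0(E)$. I first observe that the target condition (\ref{commutator}) is exactly the statement that $\overline F^k$ lies in $\ker\operatorname{ad}_{L^S}$: differentiating $\overline F^k\circ e^{tL^S}=e^{tL^S}\circ\overline F^k$ in $t$ at $t=0$ gives $[L^S,\overline F^k]=0$, and conversely this infinitesimal identity integrates back to (\ref{commutator}) because $e^{tL^S}$ is a one-parameter group. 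So the whole theorem reduces to pushing each $\overline F^k$ into $\ker\operatorname{ad}_{L^S}$.

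The algebraic heart of the argument is a structural fact about $P^k(E)$. Diagonalising $L^S$ over $\mathbb{C}$ with eigenvalues $\lambda_1,\dots,\lambda_n$ and dual coordinates $z_1,\dots,z_n$, the monomial fields $z^\alpha\partial_{z_j}$ form an eigenbasis of $\operatorname{ad}_{L^S}$ with eigenvalues $\langle\alpha,\lambda\rangle-\lambda_j$; hence $\operatorname{ad}_{L^S}$ is semisimple on the complexification, hence semisimple on the real $P^k(E)$, and
$$P^k(E)=\ker\operatorname{ad}_{L^S}\oplus\operatorname{im}\operatorname{ad}_{L^S}\, .$$
Moreover $\operatorname{ad}_{L}=\operatorname{ad}_{L^S}+\operatorname{ad}_{L^N}$ is the Jordan decomposition of $\operatorname{ad}_L$ on $P^k(E)$: the summands commute, the first is semisimple and the second nilpotent. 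From this one extracts the key inclusion $\operatorname{im}\operatorname{ad}_{L^S}\subseteq\operatorname{im}\operatorname{ad}_{L}$, since on the invariant subspace $\operatorname{im}\operatorname{ad}_{L^S}$ the semisimple part is invertible and adding a commuting nilpotent keeps $\operatorname{ad}_L$ invertible there.

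With these ingredients the induction is routine. Assume inductively that $F^1,\dots,F^{k-1}$ already lie in $\ker\operatorname{ad}_{L^S}$, and split the current term $F^k=F^k_{\ker}+F^k_{\mathrm{im}}$ along the decomposition above. By the inclusion just proved there is a homogeneous generator $g\in P^k(E)$ solving the homological equation $[L,g]=F^k_{\mathrm{im}}$. Changing coordinates by the time-one flow $\Phi_g$ of $g$ and expanding in a Lie series, the linear part $L$ and all terms of degree $<k+1$ are left unchanged, the degree-$(k+1)$ term becomes $F^k-[L,g]=F^k_{\ker}\in\ker\operatorname{ad}_{L^S}$, and only terms of degree $>k+1$ are further altered; in particular the normal forms already achieved at lower degrees are not disturbed. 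Performing this for $k=1,\dots,r$ yields $\overline F$ with $\overline F^k\in\ker\operatorname{ad}_{L^S}$ for all $1\leq k\leq r$, which is (\ref{commutator}). Each $g$ is a polynomial vector field vanishing to second order, so each $\Phi_g$ is a near-identity real-analytic local diffeomorphism, and $\Phi$, being the finite composition of the $r$ maps $\Phi_g$, is analytic. I expect the main obstacle to be the algebraic step, above all the inclusion $\operatorname{im}\operatorname{ad}_{L^S}\subseteq\operatorname{im}\operatorname{ad}_{L}$ that guarantees solvability of the homological equation for precisely the non-equivariant part; the degree-bookkeeping for $\Phi_g$ and the analyticity of a finite composition are standard.
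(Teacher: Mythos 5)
Your proposal is correct and follows essentially the same route as the paper's proof: degree-by-degree normalisation via time-one flows of homogeneous polynomial generators, the splitting $P^k(E)=\ker\operatorname{ad}_{L^S}\oplus\operatorname{im}\operatorname{ad}_{L^S}$, and solvability of the homological equation through the inclusion $\operatorname{im}\operatorname{ad}_{L^S}\subseteq\operatorname{im}\operatorname{ad}_{L}$, with analyticity of $\Phi$ from the finite composition of polynomial flows. The only notable difference is that the paper insists on the \emph{unique} generator $G^k\in\operatorname{im}\operatorname{ad}_{L^S}$ solving the homological equation (this canonical choice is what later makes the quiver-equivariant Theorem \ref{normalformtheorem} work), whereas you allow any solution, which is immaterial for the present statement.
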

\noindent To clarify the statement in this theorem, we will now make a number of definitions and observations. A sketch of the proof of Theorem \ref{normalnormalformtheorem} will be given afterwards.
 First of all, for any two smooth vector fields $F, G \in C^{\infty}(E)$ on $E$ one may define the Lie bracket $[F, G]\in C^{\infty}(E)$ as the vector field
 \begin{align}\label{bracketdef}
 [F, G](x) :=  \left. \frac{d}{dt}\right|_{t=0} \!\!\!\!\!  (e^{tF})_*G (x)= DF(x) \cdot G(x) - DG(x) \cdot F(x)\, .
 \end{align}
 Here, $e^{tF}$ denotes the time-$t$ flow of $F$ (which is defined near each $x\in E$ for some positive time) and $(e^{tF})_*G(x) := De^{tF} \cdot G(e^{-tF}(x))$ is the pushforward of the vector field $G$ by the time-$t$ flow of $F$. 
 We say that $F$ and $G$ commute if $[F,G]=0$, which is equivalent to their flows  $e^{tF}$ and $e^{tG}$ commuting, and equivalent to $F$ being equivariant under the flow  $e^{tG}$, and equivalent to $G$ being equivariant under the flow $e^{tF}$. In particular, (\ref{commutator}) is equivalent to 
 $$[L^S, \overline F_k]=0\ \mbox{for all}\ 1\leq k\leq r\, .$$
 We shall also define, for $F\in C^{\infty}(E)$, the linear operator 
 $${\rm ad}_{F}: C^{\infty}(E) \to C^{\infty}(E)\ \mbox{by}\ {\rm ad}_{F}(G):= [F, G]\, .$$
It follows from (\ref{bracketdef}) that, if $F^k\in P^k(E)$ and $G^l\in P^l(E)$, then $[F^k, G^l]\in P^{k+l}(E)$. In other words, ${\rm ad}_{F^k}: P^l(E) \to P^{k+l}(E)$.

%Next, we remind the reader  that every linear map $L$ on a finite dimensional vector space can be  decomposed in a unique way into a semisimple and a nilpotent part. This means that there . 
We will also use the following result, that we state here without proof.
 \begin{prop}
 Let $L: E\to E$ be a linear map on a finite dimensional vector space $E$. Recall that there are a unique semisimple linear map $L^S:E \to E$ and a unique nilpotent linear map $L^N:E \to E$ so that $$L=L^S+L^N \ \mbox{and}\ [L^S, L^N]: = L^S L^N - L^N L^S =0\, .$$ $L^S$ is called the {\rm semisimple part} of $L$ and $L^N$ the {\rm nilpotent part} of $L$. 
%If $L: E\to E$ has semisimple part $L^S$ and nilpotent part $L^N$,  % Then the restriction of ${\rm ad}_{L}$ to $P^l(E)$
 
  The  semisimple and nilpotent parts of the restriction ${\rm ad}_{L}: P^k(E) \to P^k(E)$ of ${\rm ad}_L$ to $P^k(E)$ are  then given by
   $$\left({\rm ad}_{L} \right)^S = {\rm ad}_{L^S}\ \mbox{and} \  \left({\rm ad}_{L} \right)^N = {\rm ad}_{L^N}\, .$$  
  \end{prop}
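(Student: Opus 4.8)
The plan is to show that $\mathrm{ad}_{L^S}+\mathrm{ad}_{L^N}$ \emph{is} the Jordan--Chevalley decomposition of ${\rm ad}_{L}$ on $P^k(E)$, and then to invoke the uniqueness of that decomposition. Since the bracket (\ref{bracketdef}) is bilinear and $L=L^S+L^N$, the assignment $A\mapsto {\rm ad}_A$ is linear, so that ${\rm ad}_{L}={\rm ad}_{L^S}+{\rm ad}_{L^N}$ holds on $P^k(E)$ at once. It then suffices to verify three things: that ${\rm ad}_{L^S}$ is semisimple, that ${\rm ad}_{L^N}$ is nilpotent, and that the two operators commute. Given these, the uniqueness of the semisimple-plus-commuting-nilpotent splitting forces $({\rm ad}_L)^S={\rm ad}_{L^S}$ and $({\rm ad}_L)^N={\rm ad}_{L^N}$.

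Commutativity is the easiest point. For linear vector fields $A,B$ the bracket (\ref{bracketdef}) reduces to the matrix commutator, $[A,B](x)=(AB-BA)x$, and the Jacobi identity for the bracket yields $[{\rm ad}_A,{\rm ad}_B]={\rm ad}_{[A,B]}$ as operators on $P^k(E)$. Since $[L^S,L^N]=0$ by the definition of the Jordan decomposition, we obtain $[{\rm ad}_{L^S},{\rm ad}_{L^N}]={\rm ad}_{[L^S,L^N]}=0$. For semisimplicity I would complexify and diagonalise $L^S$, choosing an eigenbasis $e_1,\dots,e_n$ with $L^S e_j=\lambda_j e_j$ and dual coordinates $x_1,\dots,x_n$. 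A direct computation from (\ref{bracketdef}) then shows that every monomial vector field $x^\alpha e_j$ (with $|\alpha|=k+1$) is an eigenvector of ${\rm ad}_{L^S}$:
\begin{align}\nonumber
{\rm ad}_{L^S}(x^\alpha e_j)=\Big(\lambda_j-\textstyle\sum_{i}\alpha_i\lambda_i\Big)\,x^\alpha e_j\, .
\end{align}
As these monomials span the complexification of $P^k(E)$, the operator ${\rm ad}_{L^S}$ is diagonalisable over $\mathbb{C}$ and hence semisimple.

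The real work lies in the nilpotency of ${\rm ad}_{L^N}$, and this is the step I expect to be the main obstacle. Here I would split ${\rm ad}_{A}=\mathcal{L}_{A}-\mathcal{R}_{A}$ into the two operators $\mathcal{L}_{A}(G):=A\circ G$ and $\mathcal{R}_{A}(G)(x):=DG(x)\,Ax$ on $P^k(E)$; the identity $D(A\circ G)=A\circ DG$ shows that $\mathcal{L}_{A}$ and $\mathcal{R}_{B}$ commute for all $A,B$, and in particular $\mathcal{L}_{L^N}$ and $\mathcal{R}_{L^N}$ commute. Because composition makes $\mathcal{L}$ multiplicative, $\mathcal{L}_{A}\mathcal{L}_{B}=\mathcal{L}_{AB}$, nilpotency of $L^N$ gives $(\mathcal{L}_{L^N})^m=\mathcal{L}_{(L^N)^m}=0$. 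The operator $\mathcal{R}$ is \emph{not} multiplicative in this way, so for $\mathcal{R}_{L^N}$ I would instead pick a basis in which $L^N$ is strictly upper triangular and assign the weight $w(x^\alpha)=\sum_i i\,\alpha_i$ to each monomial; one checks that the first-order operator $\mathcal{R}_{L^N}$ strictly raises this weight, so it is nilpotent on the finite-dimensional space $P^k(E)$. Since $\mathcal{L}_{L^N}$ and $\mathcal{R}_{L^N}$ are commuting nilpotents, their difference ${\rm ad}_{L^N}$ is nilpotent as well.

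With all three properties established, uniqueness of the Jordan--Chevalley decomposition completes the proof. I should note that an alternative, more abstract route would be to observe that $P^k(E)$ is a rational representation of $GL(E)$ and to cite the general fact that such representations preserve the Jordan decomposition; I prefer the direct argument above, since it is self-contained and produces the eigenvalues $\lambda_j-\sum_i\alpha_i\lambda_i$ of ${\rm ad}_{L^S}$ explicitly, which is exactly the resonance information that drives normal form theory. The only genuinely delicate point remains the nilpotency of $\mathcal{R}_{L^N}$: because $\mathcal{R}$ fails to be a composition-homomorphism, one cannot reduce it to a power of $L^N$ and must supply the grading argument instead.
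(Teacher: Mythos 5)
The paper states this proposition explicitly \emph{without proof} (it is introduced with the words ``we state here without proof''), so there is no argument by the authors to compare yours against; the only question is whether your blind proof stands on its own, and it does. Your strategy --- show that $\mathrm{ad}_{L^S}+\mathrm{ad}_{L^N}$ has the three defining properties of the $S$-$N$ decomposition of $\mathrm{ad}_L$ on $P^k(E)$, then invoke uniqueness --- is sound, and each verification checks out: bilinearity of the bracket gives $\mathrm{ad}_L=\mathrm{ad}_{L^S}+\mathrm{ad}_{L^N}$; commutation follows from $[\mathrm{ad}_A,\mathrm{ad}_B]=\mathrm{ad}_{[A,B]}$ (Jacobi identity) together with $[L^S,L^N]=0$; your eigenvalue computation $\mathrm{ad}_{L^S}(x^\alpha e_j)=\bigl(\lambda_j-\sum_i\alpha_i\lambda_i\bigr)x^\alpha e_j$ is correct for the paper's convention $\mathrm{ad}_F(G)=[F,G]=DF\cdot G-DG\cdot F$, and diagonalisability of the complexification is precisely real semisimplicity (worth one explicit sentence, but not a gap); and the nilpotency of $\mathrm{ad}_{L^N}$, which you rightly identify as the only delicate step, is handled correctly: $\mathcal{L}_A$ and $\mathcal{R}_B$ commute for linear $A,B$ because $D(A\circ G)=A\circ DG$, $\mathcal{L}_{L^N}$ is nilpotent since $\mathcal{L}$ is multiplicative, $\mathcal{R}_{L^N}$ is nilpotent by your grading argument (a real basis making $L^N$ strictly upper triangular always exists, e.g.\ one adapted to the flag of kernels of powers of $L^N$; in it, $\mathcal{R}_{L^N}$ sends $x^\alpha e_l$ to combinations of $x^{\alpha-e_i+e_j}e_l$ with $j>i$, strictly raising the weight $\sum_i i\,\alpha_i$, which is bounded on $P^k(E)$), and a difference of two commuting nilpotent operators is nilpotent by the binomial formula. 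Your closing remark is also apt: the explicit eigenvalues $\lambda_j-\sum_i\alpha_i\lambda_i$ are exactly the resonance data that the normal form construction of Section \ref{sec9} exploits, which is a concrete advantage of this self-contained computation over the more abstract appeal to preservation of the Jordan decomposition under rational representations of $GL(E)$.
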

  \begin{cor}\label{corrimker}
   It holds that 
    \begin{itemize}
    \item[{\it i)}] $P^k(E) = {\rm im\, ad}_{L^S} \oplus {\rm ker\, ad}_{L^S}$; 
    \item[{\it ii)}] ${\rm im\, ad}_{L^S} \cap P^k(E)  \subset {\rm im\, ad}_{L} \cap P^k(E) $;
    \item[{\it iii)}] ${\rm ker\, ad}_{L} \cap P^k(E) \subset {\rm ker\, ad}_{L^S} \cap P^k(E)$.
     \item[{\it iv)}] ${\rm ad}_L: {\rm im\, ad}_{L^S}\cap P^k(E) \to {\rm im\, ad}_{L^S} \cap P^k(E)$ is an isomorphism.
    \end{itemize}
    \end{cor}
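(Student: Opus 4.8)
The plan is to reduce all four statements to standard facts about a single linear operator on the finite-dimensional vector space $W := P^k(E)$ via its semisimple--nilpotent decomposition. Write $A := \mathrm{ad}_L$, $S := \mathrm{ad}_{L^S}$ and $N := \mathrm{ad}_{L^N}$ for the restrictions of these operators to $W$; each preserves $W$ because $L, L^S, L^N$ are linear, so $\mathrm{ad}_{(\cdot)}$ maps $P^k(E)$ into $P^{0+k}(E) = P^k(E)$. By the preceding Proposition, $A = S + N$ is exactly the decomposition of $A$ into its semisimple part $S$ and nilpotent part $N$, so $S$ is semisimple, $N$ is nilpotent, and $[S,N]=0$, whence also $[A,S] = [S+N,S] = [N,S] = 0$.

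First I would establish (i). Since $S$ is semisimple, its $0$-eigenspace coincides with its $0$-generalised eigenspace, i.e.\ $\ker S = \ker S^2$. This yields $\ker S \cap \mathrm{im}\,S = 0$: if $v = Sw$ and $Sv = 0$, then $S^2 w = 0$, so $Sw = 0$ and $v = 0$. Together with rank--nullity, $\dim \ker S + \dim \mathrm{im}\,S = \dim W$, this forces the direct sum $P^k(E) = \mathrm{im}\,S \oplus \ker S$, which is (i). Because $A$ commutes with $S$, both summands $\ker S$ and $\mathrm{im}\,S$ are $A$-invariant (if $Sv=0$ then $S(Av)=A(Sv)=0$, and if $v=Sw$ then $Av = S(Aw) \in \mathrm{im}\,S$).

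The core of the argument is (iv). On the invariant subspace $\mathrm{im}\,S$ the operator $S$ is invertible, since its kernel there is $\ker S \cap \mathrm{im}\,S = 0$, while $N$ restricts to a nilpotent operator commuting with $S$. Factoring $A|_{\mathrm{im}\,S} = S(\mathrm{Id} + S^{-1}N)$ and noting that $S^{-1}$ also commutes with $N$, the product $S^{-1}N$ is nilpotent, because $(S^{-1}N)^m = S^{-m}N^m = 0$ for $m$ large. Hence $\mathrm{Id} + S^{-1}N$ is unipotent and invertible, so $A|_{\mathrm{im}\,S}$ is an isomorphism of $\mathrm{im}\,S$, proving (iv). Statement (ii) is then immediate: as $A$ maps $\mathrm{im}\,S$ onto itself, every element of $\mathrm{im}\,\mathrm{ad}_{L^S} \cap P^k(E)$ lies in $\mathrm{im}\,\mathrm{ad}_L \cap P^k(E)$.

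Finally (iii) follows by combining (i) and (iv). Given $v \in \ker A$, decompose $v = v_0 + v_1$ with $v_0 \in \ker S$ and $v_1 \in \mathrm{im}\,S$; since both subspaces are $A$-invariant and the sum is direct, $Av = 0$ forces $Av_1 = 0$, and injectivity of $A|_{\mathrm{im}\,S}$ gives $v_1 = 0$, so $v = v_0 \in \ker S$. I expect the only genuine subtlety to be the verification in (iv) that $\mathrm{Id} + S^{-1}N$ is invertible, which rests entirely on the commutation $[S,N]=0$ provided by the Proposition; the remaining steps are routine bookkeeping with the kernel--image splitting of a semisimple operator.
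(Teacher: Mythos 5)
Your proof is correct, but it takes a genuinely different route from the paper's. The paper treats (i)--(iii) as standard facts about an arbitrary linear map $M$ on a finite-dimensional space (namely $V = \mathrm{im}\, M^S \oplus \ker M^S$, $\mathrm{im}\, M^S \subset \mathrm{im}\, M$ and $\ker M \subset \ker M^S$), applies them to $M = \mathrm{ad}_L$ on $P^k(E)$ via the preceding proposition, and then deduces (iv) \emph{from} (i) and (iii): since $\ker \mathrm{ad}_L \subset \ker \mathrm{ad}_{L^S}$ and the sum in (i) is direct, $\mathrm{ad}_L$ is injective on the invariant subspace $\mathrm{im}\, \mathrm{ad}_{L^S}$, hence an isomorphism by finite-dimensionality. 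You reverse this logical order: you prove (i) from scratch using semisimplicity ($\ker S = \ker S^2$ plus rank--nullity), prove (iv) directly via the factorization $A|_{\mathrm{im}\,S} = S\,(\mathrm{Id} + S^{-1}N)$ with $S^{-1}N$ nilpotent (which rests on $[S,N]=0$ from the preceding proposition), and only then obtain (ii) and (iii) as consequences of (iv). Both routes hinge on the same key input, the identification $(\mathrm{ad}_L)^S = \mathrm{ad}_{L^S}$ and $(\mathrm{ad}_L)^N = \mathrm{ad}_{L^N}$, but yours is self-contained where the paper's is by citation, and the unipotent factorization exhibits the inverse constructively rather than extracting injectivity from the kernel containment. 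The price is a little more bookkeeping (checking that $N$ preserves $\mathrm{im}\,S$ and that $S^{-1}$ commutes with $N$ there); the benefit is that nothing beyond rank--nullity and the definition of semisimplicity is taken on faith.
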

% \begin{itemize}
% \item The operator ${\rm ad}_{L_v^S}: P^k(E_v) \to P^k(E_v)$ is semisimple.
%  \item The operator ${\rm ad}_{L_v^N}: P^k(E_v) \to P^k(E_v)$ is nilpotent.
%\item The above two operators commute.
%\item It holds that $${\rm im\, ad}_{L_v^S} \subset {\rm im\, ad}_{L_v}\ \mbox{and}\ {\rm ker\, ad}_{L_v^S} \supset {\rm ker\, ad}_{L_v}\, .$$
%\end{itemize}

\begin{proof}
%The first statement is a standard. For example, it follows from the Jacobi identity $[{\rm ad}_L, {\rm ad}_M ]=  {\rm ad}_{[L,M]}$  that ${\rm ad}_{L_v^S}$ and ${\rm ad}_{L_v^N}$ commute, because $L_v^S$ and $L_v^N$ do.
For any linear map $M$ on a finite dimensional real vector space $V$ it holds that $V= {\rm im} \, M^S  \oplus {\rm ker} \, M^S$, ${\rm im} \, M^S \subset {\rm im} \, M$ and ${\rm ker} \, M \subset {\rm ker} \, M^S$.  So these identities hold in particular for $M= {\rm ad}_{L}$ and $V=P^k(E)$. 

To prove point {\it iv)}, note that $M(M^S(x))=M^S(M(x))$ so $M$ sends ${\rm im}\, M^S$ into itself. Because ${\rm ker}\, M \subset {\rm ker} \, M^S$, we have that ${\rm ker}\, M \cap {\rm im}\, M^S = \{0\}$.
\end{proof}
 \begin{proof}{\bf [of Theorem \ref{normalnormalformtheorem}] } 
We sketch  the well-known construction of the normal form by means of ``Lie transformations'', providing only those details that are necessary to prove Theorem \ref{normalformtheorem} below, and leaving out any analytical estimates.

First of all, recall that for any smooth vector field $G\in C^{\infty}(E)$ satisfying $G(0)=0$, the time-$t$ flow $e^{tG}$ defines a diffeomorphism of an open neighborhood of $0$ in $E$ to another open neighborhood of $0$ in $E$. Thus we can consider, for any other smooth vector field $F \in C^{\infty}(E)$, the curve $t \mapsto (e^{tG})_*F$ of transformed vector fields. This curve satisfies the initial condition
$$
(e^{0G})_*F =F\, 
$$
together with the linear differential equation
\begin{align} \nonumber 
\frac{d}{dt} (e^{tG})_*F = &  \left. \frac{d}{dh}\right|_{h=0} \!\!\!\!\! (e^{hG})_* ((e^{tG})_*F) = [G, (e^{tG})_*F ] \\ 
& = {\rm ad}_{G}((e^{tG})_*F)\, .
 \end{align}
The second equality holds by definition of the Lie bracket. % and in the third inequality we have employed the usual definition
%$${\rm ad}_{G_v}(F_v) := [G_v, F_v]\ . $$
We conclude that  
$$(e^{G})_*F = e^{{\rm ad}_{G}}(F) = F + [G, F]+\frac{1}{2}[G, [G, F]] + \ldots \ .
$$
%The main point of this proof is that by Theorem \ref{brackettheorem} the latter expression is also equal to
%$$\gamma_{f + [g,f]_{\Sigma}+ \frac{1}{2}[g, [g, f]_{\Sigma}]_{\Sigma} + \ldots} \!=\! \gamma_{e^{{\rm ad}^{\Sigma}_g}(f)}.$$
The diffeomorphism $\Phi$ in the statement of the theorem is now constructed as the composition of a sequence of time-$1$ flows $e^{G^k}$ $(1 \leq k \leq r)$ with $G^k\in  P^k(E)$. We first take $G^1 \in P^1(E)$, so that $F$ is transformed by $e^{G^1}$ into 
$$(e^{G^1})_*F=e^{{\rm ad}_{G^1}}(F) =F^0 + F^{1,1}+ F^{2,1} + \ldots $$
in which 
\begin{align}\nonumber
\begin{array}{ll} F^{1,1} = F^1+[G^1, F^0] & \in P^1(E) \\
F^{2,1} =F^2 + [G^1, F^1] + \frac{1}{2}[G^1, [G^1, F^0]] & \in P^2(E)\\
F^{3,1}=F^3+\ldots & \in P^3(E)\\
\mbox{etc.}& \ 
\end{array}
\end{align}
From now on we shall often use the notation 
$$L:=F^0\, .$$
The reason is that the operator ${\rm ad}_L$ plays an important role in the rest of this proof.
In fact, the idea is that we now try to choose a $G^1\in P^1(E)$ so that 
$$F^{1,1}=  F^{1} + [G^1, F^0] = F^{1} + [G^1, L] = F^{1} - {\rm ad}_{L}(G^1)$$
is as simple as possible. In general, it will not be possible to  arrange that $F^{1,1}$ vanishes completely. But according to Corollary \ref{corrimker} we can write 
$$F^1 = (F^1)^{\rm im} + (F^1)^{\rm ker} \ \mbox{for unique}\ (F^1)^{\rm im} \in {\rm im \ ad}_{L^S} \ {\rm and}\ (F^1)^{\rm ker} \in {\rm ker \ ad}_{L^S} \, .$$ 
Because ${\rm im \ ad}_{L^S} \subset {\rm im\ ad}_L$ (by Corollary \ref{corrimker}), we can then find a $G^1\in P^1(E)$ so that ${\rm ad}_L(G^1)=(F^1)^{\rm im}$. With this choice of $G^1$ it will hold that 
$$F^{1,1} =  (F^1)^{\rm ker} \in {\rm ker \ ad}_{L^S}\, .$$ 
Note that the choice of $G^1$ is not unique. If we replace our $G^1$ by $G^1+H^1$ with $H^1\in {\rm ker\ ad}_L$, then it will still hold that ${\rm ad}_L(G^1)=(F^1)^{\rm im}$ and therefore also $F^{1,1} =  (F^1)^{\rm ker} \in {\rm ker \ ad}_{L^S}$. To remove this freedom in the choice of $G^1$, let us recall from Corollary \ref{corrimker} that ${\rm ad}_{L}: {\rm im \ ad}_{L^S} \to {\rm im \ ad}_{L^S}$ is an isomorphism. Thus there is a unique 
$$G^1\in {\rm im\ ad}_{L^S}\, $$
with the property that ${\rm ad}_L(G^1)=(F^1)^{\rm im}$, and therefore $F^{1,1} \in {\rm ker \ ad}_{L^S}$. It will be important for later that we choose this particular unique $G^1$ to generate our first normalising transformation.

We proceed by picking the unique $G^2 \in {\rm im \ ad}_{L^S} \cap P^2(E)$ that makes that $(e^{{G^2}} \circ e^{{G^1}})_*F= $ $F^0+ F^{1,1} + F^{2,2} + \ldots $ with $F^{2,2} \in {\rm ker\, ad}_{L^S}$. Continuing in this way, after $r$ steps we obtain that 
$$\Phi:=  e^{G^r} \circ \ldots \circ e^{G^1}$$
transforms $F$ into $\Phi_*F=\overline F = F^0 + F^{1,1} + F^{2,2} + \ldots = F^0 + \overline F^1+\overline F^2+ \ldots$ where $\overline F^k \in {\rm ker\, ad}_{L^S}$ for all $1\leq k\leq r$.     

Being the composition of finitely many flows of polynomial vector fields, this $\Phi$ is obviously analytic and well-defined on an open neighbourhood of $0\in E$. 
\end{proof}
\noindent Before we can prove the main result of this section we will make a few technical observations. The first is simply that quiver symmetry is preserved when taking Lie brackets.

 \begin{prop}\label{Lieproperty}
 Let $({\bf E}, {\bf R})$ be a representation of a quiver ${\bf Q}=\{A \rightrightarrows^s_t V\}$ and let ${\bf F}, {\bf G}\in C^{\infty}({\bf E}, {\bf R})$ be smooth equivariant maps. Then also their Lie brackets $[F_v, G_v]$ ($v\in V$) define a smooth equivariant map $[{\bf F}, {\bf G}]\in C^{\infty}({\bf E}, {\bf R})$.

% then also $$R_a\circ [F_{s(a)}, G_{s(a)}] =  [F_{t(a)}, G_{t(a)}]  \circ R_a \, .$$
 \end{prop}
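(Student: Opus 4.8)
The plan is to verify, for each arrow $a \in A$, the intertwining relation $[F_{t(a)}, G_{t(a)}] \circ R_a = R_a \circ [F_{s(a)}, G_{s(a)}]$ directly from the coordinate formula (\ref{bracketdef}) for the Lie bracket. Smoothness of each $[F_v, G_v]$ is immediate from (\ref{bracketdef}), so only the equivariance needs work. Throughout I would fix an arrow $a$ and abbreviate $s = s(a)$, $t = t(a)$ and $R = R_a$, so that the hypotheses read $F_t \circ R = R \circ F_s$ and $G_t \circ R = R \circ G_s$.

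The first step is to differentiate these two equivariance identities. Differentiating $F_t \circ R = R \circ F_s$ at a point $x \in E_s$ and using that $R$ is linear (so that $DR \equiv R$), the chain rule gives
$$DF_t(Rx) \circ R = R \circ DF_s(x)\, ,$$
and analogously $DG_t(Rx) \circ R = R \circ DG_s(x)$. The second step is to insert these into (\ref{bracketdef}) evaluated at $Rx$. Using the undifferentiated relation $G_t(Rx) = R\,G_s(x)$ together with the differentiated one, I would compute
$$DF_t(Rx) \cdot G_t(Rx) = DF_t(Rx) \cdot R\, G_s(x) = R \cdot DF_s(x) \cdot G_s(x)\, ,$$
and symmetrically $DG_t(Rx) \cdot F_t(Rx) = R \cdot DG_s(x) \cdot F_s(x)$. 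Subtracting the two yields $[F_t, G_t](Rx) = R \, [F_s, G_s](x)$, which is exactly the desired identity $[F_{t(a)}, G_{t(a)}] \circ R_a = R_a \circ [F_{s(a)}, G_{s(a)}]$. Since this holds for every arrow $a$, the maps $[F_v, G_v]$ assemble into an element $[{\bf F}, {\bf G}] \in C^{\infty}({\bf E}, {\bf R})$.

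The main (and essentially only) point that needs care is the non-invertibility of $R_a$. This is why I would avoid the pushforward characterization $[F, G] = \frac{d}{dt}\big|_{t=0}(e^{tF})_* G$ and work instead with the explicit expression (\ref{bracketdef}): transporting a pushforward cleanly along a non-invertible $R_a$ is problematic, whereas (\ref{bracketdef}) only relies on the linear relation $DF_{t(a)}(R_a x) \circ R_a = R_a \circ DF_{s(a)}(x)$, which is valid for any linear intertwiner, invertible or not. No analytic estimates enter, so the entire argument reduces to this short chain-rule computation.
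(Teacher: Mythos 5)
Your proposal is correct and follows essentially the same route as the paper: both differentiate the equivariance identities $F_{t(a)}\circ R_a = R_a\circ F_{s(a)}$ and $G_{t(a)}\circ R_a = R_a\circ G_{s(a)}$ via the chain rule (using linearity of $R_a$), substitute into the coordinate formula (\ref{bracketdef}), and conclude $R_a\circ[F_{s(a)},G_{s(a)}] = [F_{t(a)},G_{t(a)}]\circ R_a$. Your closing observation about avoiding the pushforward characterization because $R_a$ need not be invertible is a sensible remark, but it does not change the computation, which matches the paper's proof line for line.
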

\begin{proof} Smoothness of the $[F_v, G_v]$ is clear. If ${\bf F}, {\bf G}\in C^{\infty}({\bf E}, {\bf R})$, then for any arrow $a\in A$ we have that 
 $$R_a \circ F_{s(a)}  = F_{t(a)} \circ R_a \ \mbox{and}\ R_a \circ G_{s(a)}  = G_{t(a)} \circ R_a\, .$$
 Differentiation of the identity $R_a \circ F_{s(a)} = F_{t(a)} \circ R_a$ yields that 
$$R_a \circ DF_{s(a)}  = (DF_{t(a)} \circ R_a)\cdot R_a\, .$$ 
Similarly, $R_a \circ DG_{s(a)}  = (DG_{t(a)} \circ R_a)\cdot R_a$. As a result we obtain that  
\begin{align}\nonumber
R_a\circ [F_{s(a)}, G_{s(a)}] =&  R_a \circ ( DF_{s(a)} \cdot G_{s(a)} - DG_{s(a)}\cdot F_{s(a)} ) \\ \nonumber
 = & (DF_{t(a)} \circ R_a) \cdot (R_a \circ G_{s(a)}) - (DG_{t(a)} \circ R_a)\cdot (R_a \circ F_{s(a)}) \\ \nonumber
 = & (DF_{t(a)} \circ R_a) \cdot (G_{s(a)} \circ R_a  ) - (DG_{t(a)} \circ R_a)\cdot (F_{s(a)}\circ R_a ) \\ \nonumber
 = & ( DF_{t(a)} \cdot G_{t(a)} - DG_{t(a)}\cdot F_{t(a)} ) \circ R_a =  [F_{t(a)}, G_{t(a)}]  \circ R_a \, .
\end{align}
 \end{proof}
\noindent The second technical result states that $S-N$-decomposition respects quiver symmetry.

%The following proposition shows that this decomposition respects quiver equivariance.

%Now let $V$ and $W$ be finite-dimensional vector spaces (over $\mathbb{R}$ again), and let $L_V:V \to V$, $L_W:W\to W$ and $A:V\to W$ be linear maps. We say that $A$ is a {\it semiconjucacy} between  $L_V$ and $L_W$ if
%$$A \circ L_V = L_W \circ A \, .$$

\begin{prop}
Let $({\bf E}, {\bf R})$ be a representation of a quiver ${\bf Q}=\{A \rightrightarrows^s_t V\}$ and  $\bf L\in \rm{End}(\bf E, \bf R)$. Define the semisimple part ${\bf L}^S$ and the nilpotent part ${\bf L}^N$ of $\bf L$ to consist of the maps $(L_v)^S: E_v \to E_v$ and $(L_v)^N: E_v\to E_v$ (for each $v\in V$) respectively. Then ${\bf L}^S, {\bf L}^N \in \rm{End}(\bf E, \bf R)$.
\end{prop}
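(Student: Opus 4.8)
The plan is to reduce everything to the single observation that quiver-equivariance is preserved under taking polynomials of the $L_v$. Indeed, from $R_a \circ L_{s(a)} = L_{t(a)} \circ R_a$ one gets by induction $R_a \circ L_{s(a)}^k = L_{t(a)}^k \circ R_a$ for every $k \geq 0$, and hence by linearity
$$R_a \circ p(L_{s(a)}) = p(L_{t(a)}) \circ R_a \quad \text{for every polynomial } p \in \mathbb{R}[X] \text{ and every } a \in A.$$
So it suffices to produce polynomials $p, q \in \mathbb{R}[X]$, \emph{the same for all vertices}, such that $(L_v)^S = p(L_v)$ and $(L_v)^N = q(L_v)$ for every $v \in V$; the equivariance of ${\bf L}^S$ and ${\bf L}^N$ then follows by substituting $p$ and $q$ into the displayed identity.

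The one subtlety is that the Jordan--Chevalley theorem produces, for each individual operator $L_v$, a polynomial expressing its semisimple part, but a priori this polynomial depends on $v$ (it is built from the eigenvalues of $L_v$). To obtain a single polynomial I would pass to the direct sum. Set $E := \bigoplus_{v\in V} E_v$ and $L := \bigoplus_{v\in V} L_v : E \to E$, a single linear map on a finite-dimensional real vector space. Since $\bigoplus_v (L_v)^S$ is semisimple, $\bigoplus_v (L_v)^N$ is nilpotent, the two commute, and their sum is $L$, uniqueness of the $S$--$N$ decomposition gives $L^S = \bigoplus_v (L_v)^S$ and $L^N = \bigoplus_v (L_v)^N$. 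Now Jordan--Chevalley over the perfect field $\mathbb{R}$ yields a single polynomial $p$ with $L^S = p(L)$, and then $q := X - p$ satisfies $L^N = q(L)$. Restricting to the $L$-invariant summand $E_v$ and using $p(L)|_{E_v} = p(L|_{E_v}) = p(L_v)$ gives exactly $(L_v)^S = p(L_v)$ and $(L_v)^N = q(L_v)$ for all $v$ simultaneously, as required.

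Combining the two paragraphs finishes the argument: for each $a \in A$ we get $R_a \circ (L_{s(a)})^S = R_a \circ p(L_{s(a)}) = p(L_{t(a)}) \circ R_a = (L_{t(a)})^S \circ R_a$, and likewise for $q$ and the nilpotent part, so ${\bf L}^S, {\bf L}^N \in {\rm End}({\bf E}, {\bf R})$. The main obstacle is precisely the vertex-independence of the normalising polynomial, which the direct-sum construction resolves cleanly. As an alternative that avoids invoking the polynomial form of Jordan--Chevalley, one could argue directly from Proposition \ref{eigenspacesprop}: it already shows that each (real or complex-conjugate-pair) generalised eigenspace is a subrepresentation, and since $(L_v)^S$ acts as the scalar $\lambda$ on a real generalised eigenspace $E_v^{\lambda}$ (and, after complexifying, as $\mu$ respectively $\bar\mu$ on the two complex eigenspaces inside $E_v^{\mu,\bar\mu}$), the intertwining with $R_a$ could be checked eigenspace by eigenspace; I nonetheless prefer the polynomial route as being shorter and coordinate-free.
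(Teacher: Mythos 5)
Your proof is correct and follows essentially the same strategy as the paper's: both rest on the polynomial form of the Jordan--Chevalley decomposition applied to a block-diagonal direct sum, so that a single polynomial $p$ computes the semisimple parts at both the source and target of an arrow, after which equivariance follows from $R_a \circ p(L_{s(a)}) = p(L_{t(a)}) \circ R_a$. The only cosmetic difference is that the paper forms the two-block sum $L_{(a)} = L_{s(a)} \oplus L_{t(a)}$ separately for each arrow $a$ (so its polynomial may depend on $a$, which is harmless since equivariance is checked arrow by arrow), whereas you take one global sum $\bigoplus_{v \in V} L_v$ and obtain a single polynomial valid for the whole quiver.
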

\begin{proof}
For any arrow $a\in A$ consider the linear map 
$$L_{(a)} = \left( \begin{array}{cc} L_{s(a)} & 0 \\ 0 & L_{t(a)} \end{array} \right) \ \mbox{from} \ E_{s(a)} \times E_{t(a)} \ \mbox{to}\ E_{s(a)} \times E_{t(a)} \, .$$
To compute the semisimple and nilpotent parts of this $L_{(a)}$, recall that $L_{(a)}^S$ and $L_{(a)}^N$ are polynomial functions of $L_{(a)}$. This means that there is a polynomial 
$p(x) = a_0 + a_1 x + \ldots + a_n x^n$, with $a_0, \ldots, a_n \in \mathbb{R}$, so that $L_{(a)}^S = p(L_{(a)}) = a_0 \mbox{Id} + a_1 L_{(a)} + \ldots + a_nL_{(a)}^n$ and $L_{(a)}^N = (1-p)(L_{(a)})$. 
 Clearly, $$L_{(a)}^S = p(L_{(a)}) = \left( \begin{array}{cc} p(L_{s(a)}) & 0 \\ 0 & p(L_{t(a)}) \end{array} \right)\, .$$
From this it is clear that $p(L_{s(a)})$ must be the semisimple part of $L_{s(a)}$ and $(1-p)(L_{s(a)})$ must be the nilpotent part of $L_{s(a)}$, and similarly for $L_{t(a)}$. For example, because $p(L_{(a)})$ is semisimple it follows that  $p(L_{s(a)})$ and $p(L_{t(a)})$ must both  be semisimple as well. The remaining conditions for an $S-N$-decomposition are checked similarly. 

Finally, recall that $R_a \circ L_{s(a)} = L_{t(a)} \circ R_a$ because $\bf L\in \rm{End}(\bf E, \bf R)$. Hence it follows that $R_a \circ L_{s(a)}^S = R_a \circ p(L_{s(a)}) = p(L_{t(a)}) \circ R_a = L_{t(a)}^S \circ R_a$, and $R_a \circ L_{s(a)}^N = R_a \circ (1-p)(L_{s(a)}) = (1-p)(L_{t(a)}) \circ R_a = L_{t(a)}^N \circ R_a$. 
% if and only if both $p(L_{s(a)})$ and $p(L_{t(a)})$ are semisimple. 
%We now define $$\overline{L_W^S}:=p(L_W)\ \mbox{and} \ \overline{L_W^N}:=(1-p)(L_W)\, .$$
%The assumption that $A \circ L_V = L_W \circ A$  then leads to the following conclusions. 
%\begin{itemize}
%\item ($\overline{L_W^S} + \overline{L_W^N}) \circ A = L _W \circ A$.
%\item $[\overline{L_W^S}, \overline{L_W^N}] \circ A =0$ because $[L_V^S, L_V^N]=0$.
%\item  $\left(\overline{L_W^N}\right)^{{\rm dim} V} \circ A = 0$ because $L_V^N$ is nilpotent.
%\item $\left(\overline{L_W^S} - \lambda_1\right)\circ  \ldots \circ \left(\overline{L_W^S} - \lambda_k\right) \circ A =0$
%\end{itemize}
%In other words, the restrictions of $\overline{L_W^S}$ and $\overline{L_W^N}$ to the $L_W$-invariant subspace ${\rm im}\,  A\subset W$ satisfy the requirements for an $S-N$-decomposition of the restriction of $L_W$ to ${\rm im}\,  A$. But $S-N$-decomposition is unique, so 
%$$( L_W |_{{\rm im}\, A} )^S = \left. \overline{L_W^S}\right|_{{\rm im}(A)}\ \mbox{and} \ ( L_W |_{{\rm im}\, A} )^N = \left. \overline{L_W^N}\right|_{{\rm im}(A)}\, .$$ 
%$$A \circ L_V^S = \overline{L_W^S} \circ A \ \mbox{and} \ A \circ L_V^N = \overline{L_W^N} \circ A\, .$$
%It is not necessarily true that $\overline{L_W^S} = L_W^S$ and $\overline{L_W^N} = L_W^N$, but we claim that $\overline{L_W^S} \circ A = L_W^S \circ A$ and $\overline{L_W^N} \circ A = L_W^S \circ A$, which suffices to prove the proposition. To prove our claim, note that 
\end{proof}
%For any quiver ${\bf Q}$ we define the opposite quiver ${\bf Q}^{\rm op}$ to contain the same vertices and arrows as ${\bf Q}$ but with source and target maps interchanged. In other words, the arrows in ${\bf Q}^{\rm op}$ are the opposite of those in ${\bf Q}$.
%\begin{defi}
%Let $({\bf E}, {\bf R})$ be a representation of a quiver ${\bf Q}$. For any arrow $a\in {\bf Q}$ define the map
%$$R_a^*: P_{k}(E_{t(a)}) \to P_{k}(E_{s(a)})\ \mbox{by}\ R_a^*(F_k) = F_k \circ R_a\, .$$
%Then $(P_k({\bf E}), {\bf R^*})$ defines a representation of the opposite quiver ${\bf Q}^{\rm op}$.
%Then we define $P_k({\bf E}, {\bf R^*})$ to be the representation consisting of the spaces $P_k(E_v)$ (of homogeneous polynomial maps) and maps $$R_a: P_{k}(E_{t(a)}) \to P_{k}(E_{s(a)})\ \mbox{defined by}\ R_a^*(F_k) = F_k \circ R_a\, .$$
%\end{defi}
\noindent We are now ready for the main result of this section:
\begin{thr}[Quiver equivariant normal form theorem] \label{normalformtheorem}
Let $({\bf E}, {\bf R})$ be a representation of a quiver ${\bf Q}=\{A \rightrightarrows^s_t V\}$. Let ${\bf F}\in C^{\infty}({\bf E}, {\bf R})$ be a smooth ${\bf Q}$-equivariant vector field, i.e., it consists of vector fields 
$$F_v:E_v\to E_v \ (v\in V)\ \mbox{satisfying}\ F_{t(a)}\circ R_a = R_a \circ F_{s(a)}\ (a\in A)\, .$$ 
We assume that ${\bf F}(0)=0$, i.e., $F_v(0)=0$ for all $v\in V$, and we write  
$${\bf F}= {\bf F}^0+ {\bf F}^1+ {\bf F}^2+ \ldots  \ \mbox{with}\ {\bf F}^k \in P^k({\bf E}, {\bf R}) \, ,$$ 
i.e., $F_v^k\in P^k(E_v)$ for each $v\in V$ and $F_{t(a)}^k\circ R_a = R_a \circ F^k_{s(a)}$ for all $a\in A$. 

Then the local normal forms $\overline{F}_v = F_v^0+\overline F_v^1 + \overline F_v^2 + \ldots$ ($v\in V$) of  the vector fields $F_v=F_v^0+  F_v^1 +   F_v^2 + \ldots$  constructed in Theorem \ref{normalnormalformtheorem} satisfy
$$ R_a \circ \overline{F}_{s(a)} = \overline{F}_{t(a)} \circ R_a\ \mbox{for each}\ a\in A\, .$$
So they define a smooth ${\bf Q}$-equivariant vector field $\overline {\bf F}$ and polynomial ${\bf Q}$-equivariant vector fields $\overline {\bf F}^k$ 
 on an open neighbourhood of $0$ in $({\bf E}, {\bf R})$.
\end{thr}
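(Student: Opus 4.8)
The plan is to prove the statement by induction on the normalising steps used in the proof of Theorem \ref{normalnormalformtheorem}, showing that \emph{each} generator ${\bf G}^k$ produced there assembles into a ${\bf Q}$-equivariant collection, so that the associated transformation $e^{{\bf G}^k}$ preserves quiver symmetry. Throughout I write ${\bf L} := {\bf F}^0 = D{\bf F}(0)$; since ${\bf F}$ is ${\bf Q}$-equivariant this ${\bf L}$ is an endomorphism of $({\bf E}, {\bf R})$, and by the proposition on $S$-$N$-decomposition its semisimple part ${\bf L}^S$ is an endomorphism as well, i.e. $R_a \circ L^S_{s(a)} = L^S_{t(a)} \circ R_a$ for every $a \in A$. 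The whole argument rests on one observation already recorded in Proposition \ref{Lieproperty}: for any arrow $a$, if $P_{s(a)} \sim_a P_{t(a)}$ and $Q_{s(a)} \sim_a Q_{t(a)}$ (writing $P_{s(a)} \sim_a P_{t(a)}$ for $R_a \circ P_{s(a)} = P_{t(a)} \circ R_a$), then $[P_{s(a)}, Q_{s(a)}] \sim_a [P_{t(a)}, Q_{t(a)}]$. Taking $Q = {\bf L}$ or $Q = {\bf L}^S$, this says that the operators ${\rm ad}_{L_v}$ and ${\rm ad}_{L^S_v}$ intertwine the relation $\sim_a$; since $\sim_a$ is a linear condition, so does every polynomial in these operators.

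First I would show that the vertex-wise splitting $P^k(E_v) = {\rm im\, ad}_{L^S_v} \oplus {\rm ker\, ad}_{L^S_v}$ of Corollary \ref{corrimker} is compatible with the arrows. By the proposition identifying $({\rm ad}_{L_v})^S = {\rm ad}_{L^S_v}$, each ${\rm ad}_{L^S_v}$ is semisimple on $P^k(E_v)$, so the projection $\Pi^{\rm ker}_v$ onto ${\rm ker\, ad}_{L^S_v}$ along ${\rm im\, ad}_{L^S_v}$ is a real polynomial in ${\rm ad}_{L^S_v}$; and as $V$ is finite one may choose a \emph{single} real polynomial $q$ with $q(0)=1$ and $q(\lambda)=0$ at every nonzero eigenvalue $\lambda$ of every ${\rm ad}_{L^S_v}$, so that $\Pi^{\rm ker}_v = q({\rm ad}_{L^S_v})$ for all $v$ simultaneously. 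By the observation above, $q({\rm ad}_{L^S_{s(a)}})$ and $q({\rm ad}_{L^S_{t(a)}})$ intertwine $\sim_a$, so $\Pi^{\rm ker}$ preserves ${\bf Q}$-equivariance. Consequently, whenever ${\bf F}^k$ is ${\bf Q}$-equivariant, both the ``image part'' $({\bf F}^k)^{\rm im}$ and the ``kernel part'' $({\bf F}^k)^{\rm ker}$ are ${\bf Q}$-equivariant collections.

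The crux---and the step I expect to be the main obstacle---is to check that the \emph{unique} generator ${\bf G}^k$, defined in the proof of Theorem \ref{normalnormalformtheorem} vertex-by-vertex as the solution in ${\rm im\, ad}_{L^S_v} \cap P^k(E_v)$ of ${\rm ad}_{L_v}(G^k_v) = (F^k_v)^{\rm im}$, is itself ${\bf Q}$-equivariant; a priori these independently chosen components need not intertwine the $R_a$. Here I would work in the finite-dimensional space $\mathcal{I}^k$ of ${\bf Q}$-equivariant collections ${\bf P}$ with $P_v \in {\rm im\, ad}_{L^S_v}$ for all $v$ (well-defined by the previous paragraph). By Proposition \ref{Lieproperty} together with point {\it iv)} of Corollary \ref{corrimker}, the operator ${\rm ad}_{\bf L}$ maps $\mathcal{I}^k$ into itself and is injective there, hence is an isomorphism of $\mathcal{I}^k$. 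Since $({\bf F}^k)^{\rm im} \in \mathcal{I}^k$, there is therefore a \emph{${\bf Q}$-equivariant} collection ${\bf G}^k \in \mathcal{I}^k$ with ${\rm ad}_{\bf L}({\bf G}^k) = ({\bf F}^k)^{\rm im}$; by the vertex-wise uniqueness of point {\it iv)} of Corollary \ref{corrimker} this collection coincides with the generator of Theorem \ref{normalnormalformtheorem}. Thus the canonical choice made in that theorem is exactly what forces quiver symmetry---an arbitrary solution of ${\rm ad}_{L_v}(G^k_v) = (F^k_v)^{\rm im}$ would not do.

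Finally I would close the induction. Assuming the vector field obtained after the first $k-1$ normalising steps is ${\bf Q}$-equivariant, its homogeneous part of degree $k$ is ${\bf Q}$-equivariant, so the previous paragraph yields a ${\bf Q}$-equivariant generator ${\bf G}^k$. Applying the transformation amounts to the Lie series $(e^{G^k_v})_* F_v = \sum_{j\ge 0}\tfrac{1}{j!}{\rm ad}_{G^k_v}^j(F_v)$, in which each homogeneous component is a finite sum; iterating Proposition \ref{Lieproperty} shows every term ${\rm ad}_{G^k_v}^j(F_v)$ intertwines the $R_a$, and finite sums of such terms do too, so the transformed field and all its homogeneous parts remain ${\bf Q}$-equivariant. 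Repeating for $k = 1, \ldots, r$ shows that $\Phi = e^{G^r}\circ\cdots\circ e^{G^1}$ sends ${\bf F}$ to a normal form $\overline{\bf F} = F^0 + \overline{\bf F}^1 + \cdots$ whose terms $\overline{\bf F}^k = ({\bf F}^{k})^{\rm ker}$ satisfy $R_a \circ \overline F^k_{s(a)} = \overline F^k_{t(a)} \circ R_a$, which is the assertion of the theorem.
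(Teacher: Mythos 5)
Your proof is correct, and it shares the paper's overall skeleton---induction over the normalising steps, with the pivotal point that the \emph{unique} objects in the construction of Theorem \ref{normalnormalformtheorem} (the splitting of the degree-$k$ term into its ${\rm im\, ad}_{L^S_v}$- and ${\rm ker\, ad}_{L^S_v}$-parts, and the generator $G^k_v\in{\rm im\, ad}_{L^S_v}$) are forced by uniqueness to intertwine the maps $R_a$---but your technical route to producing equivariant candidates is genuinely different. The paper argues arrow by arrow: for each $a\in A$ it forms the space $P^k_{(a)}$ of conjugate pairs $(F^k_{s(a)},F^k_{t(a)})$ satisfying $R_a\circ F^k_{s(a)}=F^k_{t(a)}\circ R_a$, proves that the $S$-$N$-decomposition of ${\rm ad}_{L_{(a)}}$ restricted to $P^k_{(a)}$ is given by ${\rm ad}_{L^S_{(a)}}$ and ${\rm ad}_{L^N_{(a)}}$ (Proposition \ref{SNadad}), and then both decomposes $(F^{k,k-1}_{s(a)},F^{k,k-1}_{t(a)})$ and solves the homological equation \emph{inside} the pair space, identifying the outcome with the vertex-wise objects by uniqueness. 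You dispense with the pair spaces and with Proposition \ref{SNadad} entirely: equivariance of the decomposition follows in one stroke from your observation that the projection onto ${\rm ker\, ad}_{L^S_v}$ along ${\rm im\, ad}_{L^S_v}$ equals $q({\rm ad}_{L^S_v})$ for a single real interpolation polynomial $q$ chosen uniformly over the finitely many vertices, combined with the fact that a fixed polynomial in intertwining operators again intertwines; and equivariance of the generator follows by solving the homological equation in your global space $\mathcal{I}^k$ of equivariant collections, on which ${\rm ad}_{\bf L}$ is an isomorphism by point {\it iv)} of Corollary \ref{corrimker} plus finite-dimensionality. Your route treats all arrows simultaneously and makes transparent the principle that canonical spectral constructions, being uniform polynomials in equivariant operators, automatically respect quiver symmetry; its only extra input is the (standard, but not proved in the paper) fact that the spectral projection of a semisimple real operator is a real polynomial in that operator. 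The paper's route needs no such interpolation fact and stays entirely inside the $S$-$N$-machinery it has already built, at the price of the heavier pair-space bookkeeping. One cosmetic remark: at the closing step it is slightly cleaner to argue, as the paper does, that $R_a\circ G^k_{s(a)}=G^k_{t(a)}\circ R_a$ implies $R_a\circ e^{G^k_{s(a)}}=e^{G^k_{t(a)}}\circ R_a$, so that pushforward by these equivariant diffeomorphisms preserves equivariance of the full smooth field; your termwise Lie-series argument is fine for the polynomial terms $\overline{\bf F}^k$ (which is what the induction needs), but conjugating the flows sidesteps any convergence discussion for $\overline{\bf F}$ itself.
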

\begin{proof}
Recall from the proof of Theorem \ref{normalnormalformtheorem} that each of the vector fields
$$F_v = F_v^0 + F_v^1+ F_v^2 + \ldots \in C^{\infty}(E_v)$$ 
is brought into normal form by a sequence of transformations $e^{G_v^k}$. We will show  that  the generators $G_v^k$ of these transformations satisfy $G_{t(a)}^k \circ R_a = R_a \circ G_{s(a)}^k$ for every  $a\in A$. From this it follows that $R_a \circ e^{G_{s(a)}^k} = e^{G_{t(a)}^k}  \circ R_a$ and hence that $R_a \circ \overline F_{s(a)} = \overline F_{t(a)} \circ R_a$. 
The proof is by induction on $k$. 

So let us assume that 
$$R_a \circ G_{s(a)}^j = G_{t(a)}^j \circ R_a \ \mbox{for every} \ a\in A\ \mbox{and every}\ j=1, \ldots k-1\, .$$ 
%Then the ``partial normal form'' (to order $k-1$) will also satisfy
%$$R_a \circ \left( e^{G_{s(a)}^{k-1}} \circ \ldots \circ e^{G_{s(a)}^{1}} \right)_* \!\!\! F_{s(a)} =  \left( e^{G_{t(a)}^{k-1}} \circ \ldots \circ e^{G_{t(a)}^{1}} \right)_*\!\!\! F_{t(a)} \circ R_a\ \mbox{for all} \ a\in A\, .$$
We recall that the generator $G_v^k$ is the unique vector field in  ${\rm im  \ ad}_{L_v^S} \cap P^k(E_v)$ that solves the  equation 
\begin{align} \label{homologicalv}
F_{v}^{k, k-1} - {\rm ad}_{L_{v}}(G_{v}^k) \in {\rm ker \ ad}_{L^S_{v}} \ \mbox{for all}\ v\in V\, .
\end{align}
%Similarly, to put $F_{t(a)}$ in normal form at order $k$ we must choose a $G_{t(a)}^k$ so that 
%$$F_{t(a)}^{k, k-1} - {\rm ad}_{L_{t(a)}}(G_{t(a)}^k)  \in {\rm ker \ ad}_{L^S_{t(a)}}  \, .$$
Here $F_{v}^{k, k-1} \in P^k(E_v)$. Importantly, our induction hypothesis implies that 
$$R_a \circ F_{s(a)}^{k, k-1} = F_{t(a)}^{k, k-1} \circ R_a\ \mbox{for all}\ a\in A\, .$$ 
We will now show that  this implies that $R_a \circ G_{s(a)}^k = G_{t(a)}^k\circ R_a$ for every $a\in A$. 
The proof of this fact is somewhat technical and goes as follows. 

For any arrow $a\in A$, let us  define the space of conjugate pairs of homogeneous polynomial vector fields
$$P^k_{(a)} := \{(F_{s(a)}^k, F_{t(a)}^k) \in P^k(E_{s(a)})\times P^k(E_{t(a)}) \, |\, R_a \circ F_{s(a)}^k  = F_{t(a)}^k \circ R_a \} \, ,$$
%Note that when $(F_{s(a)}^k, F_{t(a)}^k) \in P^k_{(a)}$ and $(G_{s(a)}^l, G_{t(a)}^l) \in P^l_{(a)}$, then  the coordinatewise Lie bracket satisfies
%$$([F_{s(a)}^k, G_{s(a)}^l], [F_{t(a)}^k, G_{t(a)}^l]) \in P^{k+l}_{(a)}\, .$$
%This follows directly from Proposition \ref{Lieproperty} and from the fact that the Lie bracket of a polynomial vector field of degree $k+1$ and a polynomial vector field of degree $l+1$, is polynomial of degree $k+l+1$.
\noindent
and for any ${\bf L}\in {\rm End}({\bf E}, {\bf R})$, let us define the linear map ${\rm ad}_{L_{(a)}}: P^k_{(a)}  \to P^k_{(a)}$  
by
$$
{\rm ad}_{L_{(a)}} (F_{s(a)}^k, F_{t(a)}^k) := ( {\rm ad}_{L_{s(a)}}(F^k_{s(a)}), {\rm ad}_{L_{t(a)}}(F^k_{t(a)}))\, . %=  ([L_{s(a)}, F^k_{s(a)}], [L_{t(a)}, F^k_{t(a)}]) \, .
$$
Note that this map is well-defined by Proposition \ref{Lieproperty}: it maps  $P^k_{(a)}$ into $P^k_{(a)}$. Note moreover that equation (\ref{homologicalv}) for $v=s(a)$ and equation (\ref{homologicalv}) for $v=t(a)$ together read
$$\underbrace{(F_{s(a)}^{k, k-1}, F_{t(a)}^{k, k-1})}_{\in P^k_{(a)}}  - {\rm ad}_{L_{(a)}}\underbrace{(G_{s(a)}^k, G_{t(a)}^k)}_{{\rm is\, \, this\, \, in}\ P^k_{(a)}{\rm \, ?}} \in {\rm ker \ ad}_{L^S_{(a)}}\, .$$
We dedicate a separate proposition to the following observation.
\begin{prop}\label{SNadad}
The $S-N$-decomposition of ${\rm ad}_{L_{(a)}}: P^k_{(a)} \to P^k_{(a)}$ is given by 
$$
({\rm ad}_{L_{(a)}})^S = {\rm ad}_{L_{(a)}^S} \ \mbox{and} \ ({\rm ad}_{L_{(a)}})^N= {\rm ad}_{L_{(a)}^N}\, .
$$
%where  
%\begin{align} \nonumber
%&{\rm ad}_{L_{(a)}^S} (F_{s(a)}^k, F_{t(a)}^k) := ( [L_{s(a)}^S, F^k_{s(a)}],  [L_{t(a)}^S, F^k_{t(a)}]) \ {\rm and}\\ \nonumber
%&{\rm ad}_{L_{(a)}^N} (F_{s(a)}^k, F_{t(a)}^k) := ( [L_{s(a)}^N, F^k_{s(a)}],  [L_{t(a)}^N, F^k_{t(a)}]) \, .
%\end{align} 
\end{prop}
\begin{proof}{\bf [of Proposition \ref{SNadad}]}
Note that ${\rm ad}_{L_{(a)}}$ is the restriction of the product  
$${\rm ad}_{L_{s(a)}} \times {\rm ad}_{L_{t(a)}}: P^k(E_{s(a)}) \times P^k(E_{t(a)}) \to P^k(E_{s(a)}) \times P^k(E_{t(a)}) \, $$
to $P^k_{(a)}$. The $S-N$-decomposition of this product map is clearly given by the product of the $S-N$-decompositions, i.e.,
\begin{align}\nonumber
& ({\rm ad}_{L_{s(a)}} \times {\rm ad}_{L_{t(a)}})^S = {\rm ad}_{L_{s(a)}^S} \times {\rm ad}_{L_{t(a)}^S} \, ,  \\  \nonumber
&  ({\rm ad}_{L_{s(a)}} \times {\rm ad}_{L_{t(a)}})^N = {\rm ad}_{L_{s(a)}^N} \times {\rm ad}_{L_{t(a)}^N}\, .
\end{align}
This can be checked directly, by verifying that the right hand sides satisfy the requirements for the $S-N$-decomposition of ${\rm ad}_{L_{s(a)}} \times {\rm ad}_{L_{t(a)}}$.

But the restriction of ${\rm ad}_{L_{s(a)}^S} \times {\rm ad}_{L_{t(a)}^S}$ to  $P^k_{(a)}$ is ${\rm ad}_{L_{(a)}^S}$. And the restriction of ${\rm ad}_{L_{s(a)}^N} \times {\rm ad}_{L_{t(a)}^N}$ to  $P^k_{(a)}$ is ${\rm ad}_{L_{(a)}^N}$. Moreover, ${\rm ad}_{L_{(a)}^S}$ and ${\rm ad}_{L_{(a)}^N}$ leave $P^k_{(a)}$ invariant as ${\bf L}_{(a)}^S, {\bf L}_{(a)}^N \in {\rm End}({\bf E}, {\bf R})$. This proves the proposition, because the $S-N$-decomposition of the restriction is the restriction of the $S-N$-decomposition.
\end{proof}
\noindent We continue the proof of Theorem \ref{normalformtheorem}. Note that Proposition \ref{SNadad} implies that  
$$P^k_{(a)} = {\rm im\ ad}_{L^S_{(a)}} \oplus {\rm ker\ ad}_{L^S_{(a)}} \, .$$
Just like in the proof of Theorem \ref{normalnormalformtheorem} we can thus uniquely decompose
\begin{align}\label{splitF}
\hspace{-.2cm} (F^{k,k-1}_{s(a)}, F^{k,k-1}_{t(a)}) %& = (F^{k,k-1}_{s(a)}, F^{k,k-1}_{t(a)})^{\rm im} + (F^{k,k-1}_{s(a)}, F^{k,k-1}_{t(a)})^{\rm ker}\\ \nonumber
 = \underbrace{((\widetilde{F}^{k,k-1}_{s(a)})^{\rm im}, (\widetilde{F}^{k,k-1}_{t(a)})^{\rm im})}_{\in \, {\rm im\, ad}_{L_{(a)}^S} \! \cap \, P^k_{(a)}} + \underbrace{((\widetilde{F}^{k,k-1}_{s(a)})^{\rm ker}, (\widetilde{F}^{k,k-1}_{t(a)})^{\rm ker})}_{\in \, {\rm ker\, ad}_{L_{(a)}^S}\! \cap \, P^k_{(a)}} \, .
\end{align}
By definition of  ${\rm ad}_{L_{(a)}^S}$, equation (\ref{splitF}) just means that  
$$F^{k,k-1}_{s(a)}  = \underbrace{(\widetilde{F}^{k,k-1}_{s(a)})^{\rm im}}_{\in \, {\rm im\, ad}_{L_{s(a)}^S}} + \underbrace{(\widetilde{F}^{k,k-1}_{s(a)})^{\rm ker}}_{\in \, {\rm ker\, ad}_{L_{s(a)}^S}} \ \mbox{and} \ F^{k,k-1}_{t(a)}  = \underbrace{(\widetilde{F}^{k,k-1}_{t(a)})^{\rm im}}_{\in \, {\rm im\, ad}_{L_{t(a)}^S}} + \underbrace{(\widetilde{F}^{k,k-1}_{t(a)})^{\rm ker}}_{\in \, {\rm ker\, ad}_{L_{t(a)}^S}} \, .$$
But we already know  from the proof of Theorem \ref{normalnormalformtheorem} that the latter two decompositions are unique inside $P^k(E_{s(a)})$ and $P^k(E_{t(a)})$ respectively. So we conclude that 
\begin{align}
& \left( (\widetilde{F}_{s(a)}^{k, k-1})^{\rm im},  (\widetilde{F}_{t(a)}^{k, k-1})^{\rm im} \right) =  \left( (F_{s(a)}^{k, k-1})^{\rm im},  (F_{t(a)}^{k, k-1})^{\rm im} \right) \, \mbox{and} 
\nonumber \\ \nonumber 
 & \left( (\widetilde{F}_{s(a)}^{k, k-1})^{\rm ker},  (\widetilde{F}_{t(a)}^{k, k-1})^{\rm ker} \right) =  \left( (F_{s(a)}^{k, k-1})^{\rm ker},  (F_{t(a)}^{k, k-1})^{\rm ker} \right) \, ,
\end{align}
where $(F_{s(a)}^{k, k-1})^{\rm im}$, $(F_{s(a)}^{k, k-1})^{\rm ker}$, $(F_{t(a)}^{k, k-1})^{\rm im}$, $(F_{t(a)}^{k, k-1})^{\rm ker}$ are the unique vector fields given in the proof of Theorem  \ref{normalnormalformtheorem}. 
In particular it holds that $$R_a \circ (F_{s(a)}^{k, k-1})^{\rm im} = (F_{t(a)}^{k, k-1})^{\rm im} \circ R_a\ \mbox{and}\ R_a \circ (F_{s(a)}^{k, k-1})^{\rm ker} = (F_{t(a)}^{k, k-1})^{\rm ker} \circ R_a\, .$$ 
Next, we note that Proposition \ref{SNadad} implies that ${\rm ad}_{L_{(a)}}: {\rm im\, ad}_{L_{(a)}^S} \to {\rm im\, ad}_{L_{(a)}^S}$ is an isomorphism. Hence there is a unique $G^k_{(a)} = (\widetilde{G}^k_{s(a)}, \widetilde{G}^k_{t(a)}) \in {\rm im \ ad}_{L^S_{(a)}} \! \cap \, P^k_{(a)}$ such that 
\begin{align}\label{homoldouble}
{\rm ad}_{L_{(a)}} \underbrace{(\widetilde{G}^k_{s(a)}, \widetilde{G}^k_{t(a)})}_{\in \, {\rm im \, ad}_{L^S_{(a)}} \! \cap \, P^k_{(a)}} = ((F^{k,k-1}_{s(a)})^{\rm im}, (F^{k,k-1}_{t(a)})^{\rm im})\, .
\end{align}
By definition of ${\rm ad}_{L_{(a)}}$ and ${\rm ad}_{L_{(a)}^S}$, equation (\ref{homoldouble}) just means that 
$${\rm ad}_{L_{s(a)}} (\!\!\!\!\!\!\!\!\!\! \underbrace{\widetilde{G}^k_{s(a)}}_{\in \, {\rm im \, ad}_{L^S_{s(a)}} \! \cap \, P^k_{s(a)}}\!\!\!\!\!\!\!\!\!\! ) = (F^{k,k-1}_{s(a)})^{\rm im} \ {\rm and}\ \, {\rm ad}_{L_{t(a)}} (\!\!\!\!\!\!\!\!\!\! \underbrace{\widetilde{G}^k_{t(a)}}_{\in \, {\rm im \, ad}_{L^S_{t(a)}} \! \cap \, P^k_{t(a)}}\!\!\!\!\!\!\!\!\!\! ) = (F^{k,k-1}_{t(a)})^{\rm im} \, .$$
Again, we already know  from the proof of Theorem \ref{normalnormalformtheorem} that the solutions $\widetilde{G}^k_{s(a)}$ and $\widetilde{G}^k_{t(a)}$ to these two equations are unique. We conclude that 
$$\left( \widetilde{G}_{s(a)}^{k}, \widetilde{G}_{s(a)}^{k} \right) =  \left( G_{s(a)}^{k}, G_{s(a)}^{k} \right)$$
where $G_{s(a)}^{k}$ and $G_{t(a)}^{k}$ are  the unique vector fields given in the proof of Theorem  \ref{normalnormalformtheorem}. 
In particular it holds that  
$$R_a \circ G_{s(a)}^k = G_{t(a)}^k \circ R_a\, .$$
This proves that the $G_v^k$ ($v\in V$) define an equivariant vector field ${\bf G}^k\in P^k({\bf E}, {\bf R})$, which concludes the proof of the induction step and hence the proof of the theorem.
 \end{proof}

\section{An example}\label{sec10}
We finish this paper with an example of a network dynamical system admitting a symmetry quiver that does not only consist of subnetworks or quotients. We consider the network {\bf N} in Figure \ref{picchap10}. 

\begin{figure}[h]\renewcommand{\figurename}{\rm \bf \footnotesize Figure} 
\begin{center}
\hspace{0cm}
  \hspace{-.4cm} 
 \begin{tikzpicture}[->, scale=.75]
	 	 \tikzstyle{vertextype2} = [circle, draw, fill=yellow, minimum size=10pt,inner sep=1pt]
		 \tikzstyle{vertextype3} = [rectangle, draw, fill=green, minimum size=10pt,inner sep=1pt]

	 \node[vertextype2] (v1) at (-4,6) {$1$};
	 \node[vertextype3] (v2) at (-2,6) {$2$};
	 \node[vertextype2] (v3) at (0,6){$3$};
	 \node[vertextype3] (v4) at (2,6) {$4$};
	 \node[vertextype2] (v5) at (4,6) {$5$};

	\node at (-5, 6) {${\bf N}$};

	\path[]
	(v2) edge [thick, blue, bend right = 0] node {} (v1)
	(v4) edge [thick, blue, bend right = 40] node {} (v3)
	(v4) edge [thick, blue, bend right = 0] node {} (v5)

	(v3) edge [thick, red, bend right = 0] node {} (v2)
	(v3) edge [thick, red, bend right = 40] node {} (v4)

	;
 \end{tikzpicture} 
 \vspace{-.3cm}
 \caption{\footnotesize {\rm A network with two types of nodes. Self loops corresponding to internal dynamics are not drawn.}}
 \vspace{-.6cm}
\label{picchap10}
\end{center}
 \end{figure}
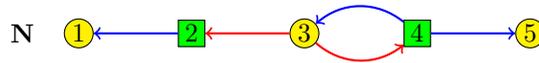 
\noindent Its admissible maps take the general form
\begin{equation}\label{exchap10-1}  %\tag{A} % \left\{
F^{{\bf N}}\left( \begin{array}{c} 
x_1 \\ y_2 \\ x_3 \\ y_4 \\ x_5
\end{array} 
\right) = \left(
 \begin{array}{l}   
    f(x_{1}, \bl{ y_{2}}) \\
 g(y_{2}, \ro{x_{3}}) \\
  f(x_{3}, \bl{ y_{4}}) \\ 
  g(y_{4}, \ro{x_{3}})\\
 f(x_{5}, \bl{ y_{4}}) \\
 \end{array}\right) \, .
 \hspace{-.8cm}
 \end{equation} 
 We will assume that all the variables are one-dimensional, i.e., that $x_1, y_2, x_3$, $y_4, x_5 \in \R$ and $f,g: \R^2 \rightarrow \R$. 
 
 To study this class of maps we will use the $3$-vertex quiver {\bf Q} shown in Figure \ref{picchap102}. The vector spaces corresponding to the vertices of {\bf Q} are given by $E_1 = \R^5$ (for the vertex at ${{\bf N}_1} = {\bf N}$), $E_2 = \R^4$ (for the vertex at ${{\bf N}_2}$) and $E_3 = \R^3$ (for the vertex at ${{\bf N}_3}$). The linear maps of the representation are given by
\begin{align}\label{linmaps10}
\begin{array}{ll} R_{a_1}(x_1, y_2, x_3, y_4, x_5) &= (x_1, y_2, x_3, y_4)\, , \\  
R_{a_2}(x_1, y_2, x_3, y_4, x_5) &= (x_5, y_4, x_3, y_4)\, , \\  
R_{a_3}(y_1, x_2, y_3) &= (x_2, y_3, x_2, y_3)\, , \\  
R_{a_4}(x_1, y_2, x_3, y_4) &= (y_2, x_3, y_4)\, .
\end{array}
\end{align}

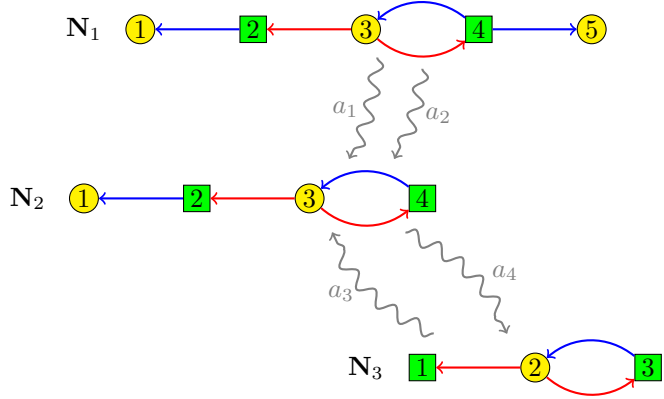
\begin{figure}[h]\renewcommand{\figurename}{\rm \bf \footnotesize Figure} 
\begin{center}
\hspace{0cm}
  \hspace{-.4cm} 
 \begin{tikzpicture}[->, scale=.75]
	 	 \tikzstyle{vertextype2} = [circle, draw, fill=yellow, minimum size=10pt,inner sep=1pt]
		 \tikzstyle{vertextype3} = [rectangle, draw, fill=green, minimum size=10pt,inner sep=1pt]

	 \node[vertextype2] (v1) at (-4,6) {$1$};
	 \node[vertextype3] (v2) at (-2,6) {$2$};
	 \node[vertextype2] (v3) at (0,6){$3$};
	 \node[vertextype3] (v4) at (2,6) {$4$};
	 \node[vertextype2] (v5) at (4,6) {$5$};

	 \node[vertextype2] (v11) at (-5,3) {$1$};
	 \node[vertextype3] (v12) at (-3,3) {$2$};
	 \node[vertextype2] (v13) at (-1,3) {$3$};
	 \node[vertextype3] (v14) at (1,3) {$4$};
	 
	 \node[vertextype3] (v21) at (1, 0) {$1$};
	 \node[vertextype2] (v22) at (3, 0) {$2$};
	 \node[vertextype3] (v23) at (5, 0) {$3$};

	\node at (-5, 6) {${\bf N}_1$};
	\node at (-6, 3) {${\bf N}_2$};
	\node at (0 ,0) {${\bf N}_3$};

	\path[]
	(v2) edge [thick, blue, bend right = 0] node {} (v1)
	(v4) edge [thick, blue, bend right = 40] node {} (v3)
	(v4) edge [thick, blue, bend right = 0] node {} (v5)

	(v12) edge [thick, blue, bend right = 0] node {} (v11)
	(v14) edge [thick, blue, bend right = 40] node {} (v13)
	
	(v23) edge [thick, blue, bend right = 40] node {} (v22)
	(v22) edge [thick, red, bend right = 0] node {} (v21)

	(v3) edge [thick, red, bend right = 0] node {} (v2)
	(v3) edge [thick, red, bend right = 40] node {} (v4)
	
	(v13) edge [thick, red, bend right = 0] node {} (v12)
	(v13) edge [thick, red, bend right = 40] node {} (v14)
	
	(v22) edge [thick, red, bend right = 40] node {} (v23)

	(1, 5.3) edge [thick, decorate, decoration=snake, bend left = 10, gray]  node [right, midway] {\hspace{1pt}$a_{2}$} (0.5, 3.7) %N6 to N5
	(0.2, 5.5) edge [thick, decorate, decoration=snake, gray, bend left = 10]  node [left, midway] {$a_1$} (-0.3, 3.7) %N2 to N1
	(0.7, 2.4) edge [thick, decorate, decoration=snake, bend left = 20, gray]  node [right, midway] {\hspace{6pt}$a_{4}$} (2.5, 0.6) %N6 to N1	
	(1.2, 0.6) edge [thick, decorate, decoration=snake, gray, bend left = 20]  node [left, midway] {$a_3$\,\,} (-0.6, 2.4) %N3 to N1

	;
 \end{tikzpicture} 
 \vspace{-.3cm}
 \caption{\footnotesize {\rm A quiver involving the network ${\bf N} = {\bf N}_1$ of Figure \ref{picchap10}.}}
 \vspace{-.4cm}
\label{picchap102}
\end{center}
 \end{figure}  
\noindent We moreover define the quiver equivariant map ${\bf F} = \left\{F^{{\bf N}_1}, F^{{\bf N}_2}, F^{{\bf N}_3}\right\}$, where $F^{{\bf N}_1} = F^{{\bf N}}$ is given by equation \eqref{exchap10-1}, and $F^{{\bf N}_2}$ and $F^{{\bf N}_3}$ are given by 
\begin{equation}\label{exchap10-2}   \nonumber %\tag{A} % \left\{
F^{{\bf N}_2}\left( \begin{array}{c} 
x_1 \\ y_2 \\ x_3 \\ y_4 
\end{array} 
\right) = \left(
 \begin{array}{l}   
    f(x_{1}, \bl{ y_{2}}) \\
 g(y_{2}, \ro{x_{3}}) \\
  f(x_{3}, \bl{ y_{4}}) \\ 
  g(y_{4}, \ro{x_{3}})\\
 \end{array}\right) \, , \quad
 F^{{\bf N}_3}\left( \begin{array}{c} 
y_1 \\ x_2 \\ y_3 
\end{array} 
\right) = \left(
 \begin{array}{l}   
    g(y_{1}, \ro{ x_{2}}) \\
 f(x_{2}, \bl{y_{3}}) \\
  g(y_{3}, \ro{ x_{2}}) \\ 
 \end{array}\right) \, .
 \hspace{-.8cm}
 \end{equation} 
A direct calculation shows that indeed 
\begin{align}\label{quividentts}
\begin{array}{ll} F^{{\bf N}_2} \circ R_{a_1} = R_{a_1} \circ F^{{\bf N}_1}\, & F^{{\bf N}_2} \circ R_{a_2} = R_{a_2} \circ F^{{\bf N}_1}\, , \\
F^{{\bf N}_2} \circ R_{a_3} = R_{a_3} \circ F^{{\bf N}_3} \, , & F^{{\bf N}_3} \circ R_{a_4} = R_{a_4} \circ F^{{\bf N}_2}\, .
\end{array}
\end{align}
Alternatively, note that $F^{{\bf N}_1}$, $F^{{\bf N}_2}$ and $F^{{\bf N}_3}$ are the admissible maps for the networks ${{\bf N}_1}$, ${{\bf N}_2}$ and ${{\bf N}_3}$ shown in Figure \ref{picchap102}. It can easily be seen that the linear maps $R_{a_1}, R_{a_2}, R_{a_3}, R_{a_4}$ are induced by graph fibrations between the networks, so that the identities \eqref{quividentts} follow from Theorem \ref{devilletheorem1}. 

 The attentive reader might wonder why we have chosen the specific quiver of Figure \ref{picchap102}. After all, this quiver does not contain all subnetworks of ${\bf N}_1$, nor does it contain all of its quotient networks. For instance, the subnetwork of ${\bf N}_1$ consisting of  nodes $3$ and $4$ (which is also a quotient of ${\bf N}_1$) is absent. To better explain our choice of the quiver, we need the following proposition.

\begin{prop}\label{quivequitonet}
Let $({\bf E}, {\bf R})$ be the representation of the quiver {\bf Q} of Figure \ref{picchap102}, consisting of the vector spaces $E_1 = \R^5$, $E_2 = \R^4$ and $E_3 = \R^3$, and the linear maps $R_{a_1}, \ldots, R_{a_4}$ given in \eqref{linmaps10}. 

A triple of maps ${\bf G} = \left\{G^1, G^2, G^3\right\}$ (with $G^i: E_i \to E_i$) is ${\bf Q}$-equivariant if and only if there exist maps $h: \R^4 \rightarrow \R$ and $l: \R^3 \rightarrow \R$ such that
\begin{align}\label{exchap10-3}  %\tag{A} % \left\{
&G^1\left( \! \begin{array}{c} 
x_1 \\ y_2 \\ x_3 \\ y_4 \\ x_5
\end{array}  \!
\right) = \left( \!
 \begin{array}{l}   
    h(x_{1}, y_{2}, x_3, y_4) \\
 l(y_{2}, x_{3}, y_4) \\
  h(x_{3}, y_{4}, x_3, y_4)\\ 
  l(y_{4}, x_{3}, y_4)\\
 h(x_{5}, y_{4}, x_3, y_4)
 \end{array} \! \right) \, , \\ \nonumber
 &G^2\left( \! \begin{array}{c} 
x_1 \\ y_2 \\ x_3 \\ y_4 
\end{array} \! 
\right) = \left( \!
 \begin{array}{l}   
    h(x_{1}, y_{2}, x_3, y_4) \\
 l(y_{2}, x_{3}, y_4) \\
  h(x_{3}, y_{4}, x_3, y_4)\\ 
  l(y_{4}, x_{3}, y_4)
 \end{array} \! \right) \ \mbox{and}\ \
 G^3\left( \! \begin{array}{c} 
y_1 \\ x_2 \\ y_3  
\end{array}  \!
\right) = \left( \!
 \begin{array}{l}   
 l(y_{1}, x_{2}, y_3) \\
  h(x_{2}, y_{3}, x_2, y_3)\\ 
  l(y_{3}, x_{2}, y_3)
 \end{array} \! \right) \, .
 \end{align} 
\end{prop}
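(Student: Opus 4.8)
The plan is to prove both implications, treating the forward (``if'') direction as a direct verification and devoting the real effort to the converse. For the ``if'' direction I would substitute the proposed formulas \eqref{exchap10-3} into each of the four equivariance identities $G^2\circ R_{a_1}=R_{a_1}\circ G^1$, $G^2\circ R_{a_2}=R_{a_2}\circ G^1$, $G^2\circ R_{a_3}=R_{a_3}\circ G^3$ and $G^3\circ R_{a_4}=R_{a_4}\circ G^2$, using the explicit maps \eqref{linmaps10}, and check equality coordinate by coordinate. After the folding maps $R_{a_2}, R_{a_3}$ are applied, each coordinate reduces to a tautology of the form $h(\cdots)=h(\cdots)$ or $l(\cdots)=l(\cdots)$, so no constraints on $h$ or $l$ appear.

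For the converse, suppose $\mathbf G=\{G^1,G^2,G^3\}$ is $\mathbf Q$-equivariant. I would write $G^1=(G^1_1,\dots,G^1_5)$, $G^2=(G^2_1,\dots,G^2_4)$, $G^3=(G^3_1,G^3_2,G^3_3)$ and translate each equivariance identity into scalar equations. The arrow $a_1$, whose map merely drops $x_5$, yields $G^1_i(x_1,y_2,x_3,y_4,x_5)=G^2_i(x_1,y_2,x_3,y_4)$ for $i=1,2,3,4$; thus these four components of $G^1$ are independent of $x_5$ and inherited from $G^2$. Dually, the arrow $a_4$, whose map drops the first slot, gives $G^3_j(y_2,x_3,y_4)=G^2_{j+1}(x_1,y_2,x_3,y_4)$ for $j=1,2,3$, so $G^2_2,G^2_3,G^2_4$ are independent of $x_1$ and inherited from $G^3$.

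Next I would exploit the two folding arrows, which carry the essential information. The map $R_{a_3}\colon(y_1,x_2,y_3)\mapsto(x_2,y_3,x_2,y_3)$, compared in $G^2\circ R_{a_3}=R_{a_3}\circ G^3$, forces $G^3_2$ and $G^3_3$ to be independent of $y_1$ and produces the diagonal evaluations $G^2_1(x_2,y_3,x_2,y_3)=G^3_2$ and $G^2_2(x_2,y_3,x_2,y_3)=G^3_3$. Setting $h:=G^2_1$ and $l:=G^3_1$, I would then read off in turn that $G^2_2=l(y_2,x_3,y_4)$ (from $a_4$), that $G^3_2(y_1,x_2,y_3)=h(x_2,y_3,x_2,y_3)$ and hence $G^2_3=h(x_3,y_4,x_3,y_4)$, and that $G^3_3(y_1,x_2,y_3)=l(y_3,x_2,y_3)$ and hence $G^2_4=l(y_4,x_3,y_4)$. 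The first four components of $G^1$ then follow from the $a_1$-relations, and finally the remaining arrow $a_2$, with $R_{a_2}(x_1,y_2,x_3,y_4,x_5)=(x_5,y_4,x_3,y_4)$, gives $G^1_5=G^2_1(x_5,y_4,x_3,y_4)=h(x_5,y_4,x_3,y_4)$. This reproduces exactly the forms in \eqref{exchap10-3}.

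The main obstacle will be the bookkeeping of variable dependencies through the two non-injective (``folding'') maps $R_{a_2}$ and $R_{a_3}$: it is precisely the repetition of $y_4$ in $R_{a_2}$ and the duplication $(x_2,y_3)\mapsto(x_2,y_3,x_2,y_3)$ in $R_{a_3}$ that collapse the middle components onto the diagonal values $h(\,\cdot\,,\,\cdot\,,\,\cdot\,,\,\cdot\,)$ with repeated arguments while stripping away the spurious dependence on $x_1$, $x_5$ and $y_1$. I would close by checking that the as-yet-unused $a_2$-relations for $G^2_2,G^2_3,G^2_4$ are automatically consistent with the forms already derived, which confirms that the only freedom is the choice of the two functions $h\colon\R^4\to\R$ and $l\colon\R^3\to\R$.
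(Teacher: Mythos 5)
Your proposal is correct and follows essentially the same route as the paper's own proof: both set $h:=G^2_1$ and $l:=G^3_1$ and extract the diagonal/repeated-argument structure from the equivariance relations for the folding arrows $a_3$ and $a_4$, then recover $G^1$ from the $a_1$- and $a_2$-relations. The only cosmetic difference is bookkeeping: the paper derives its key identities by commuting $\mathbf{G}$ with the explicit compositions $R_{a_4}R_{a_3}$, $R_{a_3}R_{a_4}$ and $R_{a_4}R_{a_3}R_{a_4}$, whereas you chain the single-arrow relations iteratively, which amounts to the same computation.
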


\begin{proof}
A direct calculation shows that ${\bf G} = \left\{G^1, G^2, G^3\right\}$ as given by equations \eqref{exchap10-3} is indeed ${\bf Q}$-equivariant. In other words, one verifies that $G^2 \circ R_{a_1} = R_{a_1} \circ G^1$, with similar relations for $R_{a_2}$, $R_{a_3}$ and $R_{a_4}$ as in equation \eqref{exchap10-2}. 

Conversely, suppose ${\bf G} = \left\{G^1, G^2, G^3\right\}$ is a ${\bf Q}$-equivariant map. This assumption implies in particular that $G^2 \circ R_{a_3} = R_{a_3}\circ G^3$, where we recall that $R_{a_3}(y_1, x_2, y_3) = (x_2, y_3, x_2, y_3)$. 
Reading off the first component of the identity $(G^2 \circ R_{a_3})(y_1, x_2, y_3)  = (R_{a_3} \circ G^3)(y_1, x_2, y_3)$, we thus see that
\begin{equation}\label{proofthing1}
G^2_1(x_2, y_3, x_2, y_3) = G^3_2(y_1, x_2, y_3)\, ,
\end{equation} 
where we have used that $(R_{a_3}\circ G^3)_1 = G^3_2$. Likewise, evaluating the first component of the identity $G^3 \circ R_{a_4} = R_{a_4} \circ G^2$ gives
\begin{equation}\label{proofthing2}
G^3_1(y_2, x_3, y_4) = G^2_2(x_1, y_2, x_3, y_4)\, .
\end{equation}
Next, we calculate 
\begin{align}\nonumber
&R_{a_4}R_{a_3}(y_1, x_2, y_3) = (y_3, x_2, y_3) \, ,\\  \nonumber
&R_{a_3}R_{a_4}(x_1, y_2, x_3, y_4) = (x_3, y_4, x_3, y_4)\, , \\ \nonumber
&R_{a_4}R_{a_3}R_{a_4}(x_1, y_2, x_3, y_4) = (y_4, x_3, y_4) \, . 
\end{align}
From the identities $G^2 \circ R_{a_3} = R_{a_3} \circ G^3$ and $G^3 \circ R_{a_4} = R_{a_4} \circ G^2$ we get $G^3 \circ R_{a_4} \circ R_{a_3} = R_{a_4} \circ R_{a_3}\circ G^3$, and the first component of this equation reads
\begin{equation}\label{proofthing3}
G^3_1 (y_3, x_2, y_3)=  G^3_3(y_1, x_2, y_3) \, .
\end{equation}
We likewise find $G^2 \circ R_{a_3} \circ R_{a_4} = R_{a_3} \circ R_{a_4}\circ G^2$, so that
\begin{equation}\label{proofthing4}
G^2_1 (x_3, y_4, x_3, y_4) = G^2_3(x_1, y_2, x_3, y_4)\, .
\end{equation}
And finally, using that $G^3 \circ R_{a_4} \circ R_{a_3} \circ  R_{a_4} = R_{a_4} \circ R_{a_3}\circ R_{a_4} \circ G^2$, we have
\begin{align}\label{proofthing5}
G^3_1 (y_4, x_3, y_4)    =  G^2_4(x_1, y_2, x_3, y_4)\, .
\end{align}
If we now set
\begin{align}
h(x_1, y_2, x_3, y_4) := G^2_1(x_1, y_2, x_3, y_4)  \quad \text{ and } \nonumber \quad l(y_1, x_2, y_3) := G^3_1(y_1, x_2, y_3)\, ,
\end{align}
then equations \eqref{proofthing1}, \eqref{proofthing2}, \eqref{proofthing3}, \eqref{proofthing4} and \eqref{proofthing5} show that $G^2$ and $G^3$ have the required form.  

The form of $G^1$ follows from similar arguments involving $R_{a_1}$ and $R_{a_2}$. More precisely, for $i = 1, \dots, 4$, we find 
\begin{equation}
G^2_i \circ R_{a_1}  = (G^2 \circ R_{a_1})_i  = (R_{a_1} \circ G^1)_i  = G^1_i\, .
\end{equation}
Moreover, we have 
\begin{equation}
G^2_1 \circ R_{a_2}  =  (G^2 \circ R_{a_2})_1 = (R_{a_2} \circ G^1)_1 = G^1_5\, .
\end{equation}
This proves that ${\bf G}$ is ${\bf Q}$-equivariant if and only if it is of the form \eqref{exchap10-3}.
\end{proof}
\begin{remk}
Note that each of the maps $G^1$, $G^2$ and $G^3$ in equation \eqref{exchap10-3} may be seen as the admissible map for some network with two types of nodes. For example, $G^1$ is an admissible map for the network ${\bf \widetilde{N}}_1$ shown in Figure \ref{picchap103}. 
Proposition \ref{quivequitonet} therefore shows that a map $\tilde{G}: \R^5 \rightarrow \R^5$ is an admissible map for the network ${\bf \widetilde{N}}_1$, if and only if we have $\tilde{G} = G^1$ for some ${\bf Q}$-equivariant map ${\bf G} = \left\{G^1, G^2, G^3\right\}$ for the quiver ${\bf Q}$  in Figure \ref{picchap102}. 
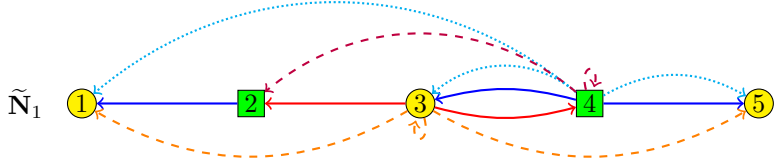
\begin{figure}[h]\renewcommand{\figurename}{\rm \bf \footnotesize Figure} 
\begin{center}
\hspace{0cm}
  \hspace{-.4cm} 
 \begin{tikzpicture}[->, scale=.75]
	 	 \tikzstyle{vertextype2} = [circle, draw, fill=yellow, minimum size=10pt,inner sep=1pt]
		 \tikzstyle{vertextype3} = [rectangle, draw, fill=green, minimum size=10pt,inner sep=1pt]

	 \node[vertextype2] (v1) at (-6,0) {$1$};
	 \node[vertextype3] (v2) at (-3,0) {$2$};
	 \node[vertextype2] (v3) at (0,0){$3$};
	 \node[vertextype3] (v4) at (3,0) {$4$};
	 \node[vertextype2] (v5) at (6,0) {$5$};

	\node at (-7, 0) {${\bf \widetilde{N}}_1$};

	\path[]
	(v2) edge [thick, blue, bend right = 0] node {} (v1)
	(v4) edge [thick, blue, bend right = 15] node {} (v3)
	(v4) edge [thick, blue, bend right = 0] node {} (v5)

	(v3) edge [thick, red, bend right = 0] node {} (v2)
	(v3) edge [thick, red, bend right = 15] node {} (v4)

	(v3) edge [thick, orange, dashed, bend right = -30] node {} (v1)
	(v3) edge [thick, orange, dashed, bend right = 30] node {} (v5)
	(v3) edge [thick, orange, dashed, loop below] node {} (v3)
	
	(v4) edge [thick, cyan, densely dotted, bend right = 40] node {} (v1)
	(v4) edge [thick, cyan, densely dotted, bend right = 40] node {} (v3)
	(v4) edge [thick, cyan, densely dotted, bend right = -30] node {} (v5)
	
	(v4) edge [thick, purple, dashed, bend right = 40] node {} (v2)
	(v4) edge [thick, purple, dashed, loop above] node {} (v4)

	;
 \end{tikzpicture} 
 \vspace{-.3cm}
 \caption{\footnotesize {\rm The network ${\bf \widetilde{N}}_1$ obtained by adding several arrow types to ${\bf {N}}_1$. Self loops corresponding to internal dynamics are not drawn.}}
 \vspace{-.4cm}
\label{picchap103}
\end{center}
 \end{figure} \\
\noindent Comparing the networks ${\bf \widetilde{N}}_1$ and ${\bf {N}}_1$, we see that ${\bf \widetilde{N}}_1$ can be obtained from ${\bf {N}}_1$ by adding arrow types. More precisely, these additional arrows are formed by concatenating two or more existing arrow types (i.e., the red and blue arrows of network ${\bf {N}}_1$). It can be shown that adding concatenations of arrow types in this fashion has no effect on the presence of sub- and quotient networks, cf. Example \ref{interestingffex}. As the network structure of $G^1$ is a consequence of quiver symmetry, we see that all information about sub- and quotient networks in $F^{{\bf N}_1}$ is ``encoded'' in the quiver of Figure \ref{picchap102}. To  obtain such a useful quiver representation, one uses the theory of fundamental networks. We will not explain this in further detail here, but see for instance Section 11 of \cite{fibr}.
\end{remk}

\noindent 
In what follows, we shall determine the properties of generic one-parameter steady state bifurcations for the ODE $\frac{dx}{dt} = F^{{\bf N}}(x; \lambda)$ by means of Lyapunov-Schmidt reduction,  while exploiting the quiver symmetry that is present in the problem. To this end, we let $F^{{\bf N}} = F^{{\bf N}_1}$, $F^{{\bf N}_2}$ and $F^{{\bf N}_3}$ depend on a parameter $\la$, taking values in some open neighbourhood $\Lambda$ of $0 \in \R$. To keep the network structures intact for all values of $\la$, we simply replace $f(x, y)$ and $g(y,x)$ in $F^{{\bf N}_1}$, $F^{{\bf N}_2}$  and $F^{{\bf N}_3}$ by $f(x, y; \la)$ and $g(y, x; \la)$. For instance, we get
\begin{equation}\label{exchap10-4}  %\tag{A} % \left\{
F^{{\bf N}_1}\left( \begin{array}{c} 
x_1 \\ y_2 \\ x_3 \\ y_4 \\ x_5 \\ \la
\end{array} 
\right) = \left(
 \begin{array}{l}   
    f(x_{1}, \bl{ y_{2}}; \la) \\
 g(y_{2}, \ro{x_{3}}; \la) \\
  f(x_{3}, \bl{ y_{4}}; \la) \\ 
  g(y_{4}, \ro{x_{3}}; \la)\\
 f(x_{5}, \bl{ y_{4}}; \la) \\
 \end{array}\right) \, .
 \hspace{-.8cm}
 \end{equation} 
We will assume that $F^{{\bf N}_1}(0; 0) = 0$, from which it follows that also $F^{{\bf N}_2}(0;0)  = F^{{\bf N}_3}( 0; 0) = 0$. If we now define 
\begin{align}
\begin{array}{ll}
a := \left.\frac{\partial f(x,y; \la)}{\partial x}\right|_{(0;0)}  \, &c := \left.\frac{\partial f(x,y; \la)}{\partial y}\right|_{(0;0)}  \, ,\\ 
b := \left.\frac{\partial g(y,x; \la)}{\partial y}\right|_{(0;0)}  \, & d := \left.\frac{\partial g(y,x; \la)}{\partial x}\right|_{(0;0)}  \, ,
\end{array}
\end{align}
then the Jacobian matrices of the network maps (in the direction of the variables $x_i$ and $y_j$, but not $\la$) are given by
\begin{align}
DF^{{\bf N}_1}(0; 0) &=  \begin{pmatrix*}[l]
 a & c & 0 & 0 & 0 \\
 0 & b & d & 0 & 0 \\
 0 & 0 & a & c & 0 \\
 0 & 0 & d & b & 0 \\
0 & 0 & 0 & c & a \\
 \end{pmatrix*} \, ,\\ \nonumber
 DF^{{\bf N}_2}(0; 0) &=  \begin{pmatrix*}[l]
 a & c & 0 & 0  \\
 0 & b & d & 0  \\
 0 & 0 & a & c  \\
 0 & 0 & d & b  \\
 \end{pmatrix*} \text{ and } \,
 DF^{{\bf N}_3}(0; 0) =  \begin{pmatrix*}[l]
 b & d & 0  \\
 0 & a & c  \\
 0 & d & b  \\
 \end{pmatrix*} \, .
 \end{align}
It is not hard to see that $DF^{{\bf N}_1}(0; 0)$ is non-invertible precisely when either $a = 0$, $b= 0$, or 
\begin{equation}\label{matrixofexam}
\det \begin{pmatrix*}[l]
 a & c  \\
 d & b  \\
 \end{pmatrix*}  = ab-cd = 0\, .
\end{equation}
Note that generically only one of these three conditions ($a=0$, $b=0$ or $ab-cd = 0$) is satisfied. We shall study these cases separately. 
\\ \mbox{} \\
\noindent {\bf Case 1:} We start with the case $a = 0$, where we assume in addition that $b \not= 0$ and $ab-cd \not= 0$. It follows that the kernels (which in this case are also the generalised kernels) of $D_XF^{{\bf N}_i}(0; 0)$, $i = 1,2,3$, are given by
\begin{align}\label{kersnelsof1}\nonumber
&\ker(D_XF^{{\bf N}_1}(0; 0)) = \{(x_1, 0,0,0,x_5) \mid x_1, x_5 \in \R\} \subset \R^5\, , \\ 
&\ker(D_XF^{{\bf N}_2}(0; 0)) = \{(x_1, 0,0,0) \mid x_1 \in \R\}  \subset \R^4\, , \\ \nonumber
&\ker(D_XF^{{\bf N}_3}(0; 0)) = \{0\} \subset \R^3 \, .
\end{align}
We will identify these spaces with $\R^2$, $\R$ and $\{0\}$ respectively, using the variables $x_1$ and $x_5$. Recall now that these kernels together form a subrepresentation (it  turns out in fact that this subrepresentation is  {\it indecomposable}). Under our identifications, the quiver symmetries restrict to this subrepresentation as
\begin{align}
&R_{a_1}(x_1, x_5) = x_1 \, , R_{a_2}(x_1, x_5) = x_5 \, , R_{a_3}(0) = 0 \text{ and } R_{a_4}(x_1) = 0\, .
\end{align}
One verifies that a general equivariant map on this subrepresentation must be of the form ${\bf H} = \left\{ H^1, H^2, 0 \right\}$, with 
$$H^1(x_1, x_5) = (h(x_1), h(x_5))\ \mbox{and}\ H^2(x_1) = h(x_1)\, ,$$ 
where $h: \R \rightarrow \R$ is any smooth function satisfying $h(0) = 0$. It thus follows from Theorem \ref{LStheorem} that, after performing equivariant Lyapunov-Smith reduction, the bifurcation equation that needs to be solved for $F^{{\bf N}_1}$ is of the form
$$H^1(x_1, x_5; \la) = (h(x_{1}; \la), h(x_{5}; \la)) = (0,0)\, ,$$ 
where $h(0;\la) = 0$ for all $\la$ close to zero. Interestingly, quiver-equivariance therefore implies that the two-dimensional bifurcation equation decouples into two one-dimensional equations. For each of the two components of $H^1$, we generically find a transcritical bifurcation with one branch satisfying $x_i(\la) = 0$, and the other  satisfying $x_i(\la) \sim \la$ (with $i = 1$ or $i = 5$). We thus get a remarkable {\bf double transcritical bifurcation} in which a total of four bifurcation branches coalesce. Their asymptotics are given by
$$(x_1(\la), x_5(\la)) \sim (0,0), (0,\la), (\la, 0)\ \mbox{and} \ (\la, \la)\, .$$ 
From the description of $\ker(D_XF^{{\bf N}_1}(0; 0))$ in equation \eqref{kersnelsof1} we see that these branches lie in the synchrony spaces $\{x_1 = x_3 = x_5, y_2 = y_4\}$, $\{x_1 = x_3, y_2 = y_4\}$, $\{x_3 = x_5, y_2 = y_4\}$ and $\{x_1 = x_5, y_2 = y_4\}$, in order of listing.\\
\ \mbox{} \\
\noindent {\bf Case 2:}
Next, we investigate what happens in case $b=0$, while $a, ab-cd \not= 0$. It follows that the (generalised) kernels of $D_XF^{{\bf N}_i}(0; 0)$, $i = 1,2,3$, are given by
\begin{align}\label{kersnelsof2}\nonumber
&\ker(D_XF^{{\bf N}_1}(0; 0)) = \{(-ca^{-1}x, x,0,0,0) \mid x \in \R\} \subset \R^5\, ,\\ 
&\ker(D_XF^{{\bf N}_2}(0; 0)) = \{(-ca^{-1}x, x,0,0) \mid x \in \R\}  \subset \R^4\, , \\ \nonumber
&\ker(D_XF^{{\bf N}_3}(0; 0)) = \{(x, 0, 0) \mid x \in \R\} \subset \R^3 \, .
\end{align}
If we now use the variable $x$ to identify each of these spaces with $\R$, then the quiver symmetries become 
\begin{align}\label{Rsubrep}
&R_{a_1}(x) = x \, , R_{a_2}(x) = 0 \, , R_{a_3}(x) = 0 \text{ and } R_{a_4}(x) = x\, .
\end{align}
Again, this defines an indecomposable subrepresentation, non-isomorphic to the one we found for Case 1. It follows from (\ref{Rsubrep}) that an equivariant map now must be of the form ${\bf H} = \left\{ h,h,h\right\}$, with $h: \mathbb{R}\to\mathbb{R}$ and $h(0) = 0$. For $F^{{\bf N}_1}$ we therefore get a single {\bf transcritical bifurcation} with branches $x(\la) = 0$ and $x(\la) \sim \la$. Comparing to the expression for $\ker(D_XF^{{\bf N}_1}(0; 0))$, we see that the former branch lies in the synchrony space $\{x_1 = x_3 = x_5, y_2 = y_4\}$, whereas the latter lies in $\{x_3 = x_5\}$. \\
\mbox{} \\
\noindent {\bf Case 3:} Finally, we assume $ab-cd = 0$ while $a,b \not= 0$. In addition, we make the generic assumption that $a+b \not= 0$. As $a+b$ is the trace of the matrix in equation \eqref{matrixofexam}, this means $D_XF^{{\bf N}_1}(0; 0)$ has a simple eigenvalue $0$. We find that there exists a unique non-zero vector $(s,t) \in \R^2$ such that 
\begin{align}\label{kersnelsof2}\nonumber
&\ker(D_XF^{{\bf N}_1}(0; 0)) = \{x\cdot (s,t,s,t,s) \mid x \in \R\} \subset \R^5\, , \\ 
&\ker(D_XF^{{\bf N}_2}(0; 0)) = \{x\cdot(s,t,s,t) \mid x \in \R\}  \subset \R^4\, , \\ \nonumber
&\ker(D_XF^{{\bf N}_3}(0; 0)) = \{x\cdot(t,s,t) \mid x \in \R\} \subset \R^3 \, .
\end{align}
If we use $x$ to identify these spaces with $\R$, then each map $R_{a_i}$ restricts to the identity. This means that we found yet another non-isomorphic indecomposable subrepresentation. It also means that equivariance poses no restrictions on the reduced maps, and we find a {\bf saddle-node bifurcation} within the maximally synchronous space $\{x_1 = x_3 = x_5, y_2 = y_4\}$. \\
\mbox{} \\
\noindent  Note that triples of admissible maps ${\bf F} = \left\{ F^{\bf N}_1, F^{\bf N}_2, F^{\bf N_3}\right\}$ for the networks ${\bf N}_1, {\bf N}_2$ and ${\bf N}_3$ constitute only a subset of the collection of all  ${\bf Q}$-equivariant maps ${\bf G}=\left\{ G^1, G^2, G^3 \right\}$; see Proposition \ref{quivequitonet}. As a result, we cannot rule out restrictions on the Taylor coefficients of the Lyapunov-Schmidt reduction ${\bf H}$ of ${\bf F}$, in addition to the ones that we found in the above ``generic'' bifurcation analysis. Such additional restrictions could make for different generic bifurcation scenarios from the ones we described, so that a more in depth analysis  is necessary to obtain the generic bifurcations for the admissible maps ${\bf F}$. 

We were indeed able to verify that the bifurcations that we described above for generic reduced quiver equivariant vector fields, are also generic for admissible vector fields ${\bf F}$. We actually found that all one-parameter bifurcation scenarios that are generic for quiver equivariant vector fields ${\bf G}=\{G^1, G^2, G^3\}$, are also generic for admissible vector fields ${\bf F} = \{ F^{\bf N}_1, F^{\bf N}_2, F^{\bf N_3}\}$. As these results are to be expected, we will not prove them here. 

%As a matter of fact we were also able to verify that the same bifurcation scenarios are generic for quiver equivariant vector fields $G$ as in Proposition 10.1, so that the admissible vector fields ${\bf F}$ are a generic subset of the equivariant ones. This is also to be expected, since the eigenvalues $a$ and $b$ both correspond to mutually non-isomorphic absolutely indecomposable representations, and the matrix $\left(\begin{array}{cc} a  & b \\ c & d \end{array} \right)$ to two isomorphic absolutely indecomposable representations (non-isomorphic to the former two).
\begin{remk}
We end this paper with a remark on representation theory. Recall that, for each of the three cases of the above example (namely, $a=0$, $b=0$ and $ab-cd=0$), we claimed  that the kernels of the Jacobian matrices form  (non-isomorphic) {\it indecomposable} subrepresentations. In fact, this holds because every endomorphism of the subrepresentation is a scalar multiple of the identity endomorphism (the identity endomorphism consists of an identity map at each node of the quiver). This fact can for instance be seen from our description of the equivariant maps for each of the three cases.

The reader familiar with classical representation theory (i.e. pertaining to compact groups) might recognise in this the definition of an \textit{absolutely irreducible subrepresentation}. Indeed, a representation of a compact group is called absolutely irreducible precisely when all of its endomorphisms are given by scalar multiples of the identity, see \cite{golschaef2}. An important result from classical equivariant theory is that a one-parameter steady state bifurcation can generically only occur along a kernel that is an absolutely irreducible representation of the symmetry. This result is especially powerful when combined with an algebraic result known as the \textit{Krull-Schmidt theorem}. The latter theorem states that any (finite dimensional) representation of a group can uniquely be written as the direct sum of a number of irreducible representations. Without going into technical details, these two results together imply that, up to isomorphism, there are only finitely many subrepresentations that one has to consider in a full investigation of possible bifurcation scenarios. 

A version of the Krull-Schmidt theorem exists for quiver symmetries as well, see \cite{Krause}. We aim to show in a follow-up article that a one-parameter steady state bifurcation in a quiver equivariant ODE occurs generically along an \textit{absolutely indecomposable subrepresentation}. This latter notion means that all endomorphisms of the subrepresentation are scalar multiples of the identity, up to nilpotent maps (in our example, the zero map happens to be the only nilpotent map, but this is not true for general quivers.) We will also show more general results pertaining to generic center subspaces, as well as multiple bifurcation parameters. Analogous results have already been shown for systems with a \textit{monoid} symmetry \cite{schwenker} and \cite{transversality}. A monoid is a generalisation of a group (see Example \ref{monoidex}) but only a special case of a quiver. 
\end{remk}

\section{Acknowledgement}
This research is partly financed by the Dutch Research Council (NWO) via Eddie Nijholt's research program ``Designing Network Dynamical Systems through Algebra''. 

Bob Rink is happy to acknowledge the hospitality and financial support of the Sydney Mathematical Research Institute. 
  \bibliography{CoupledNetworks}
\bibliographystyle{amsplain}

\end{document}